\numberwithin{equation}{subsection}
\newtheorem{thm}[subsection]{Theorem}
\newtheorem*{thm*}{Theorem}
\newtheorem{lem}[subsection]{Lemma}
\newtheorem*{lem*}{Lemma}
\newtheorem{prop}[subsection]{Proposition}
\newtheorem*{prop*}{Proposition}
\newtheorem{cor}[subsection]{Corollary}
\newtheorem*{cor*}{Corollary}
\theoremstyle{definition}
\newtheorem{defn}[subsection]{Definition}
\newtheorem*{defn*}{Definition}
\newtheorem{example}[subsection]{Example}
\newtheorem*{example*}{Example}
\newtheorem{rmk}[subsection]{Remark}
\newtheorem*{rmk*}{Remark}
\newtheorem*{rmks*}{Remarks}
\newcommand{\N}{\mathbb{N}}
\newcommand{\Z}{\mathbb{Z}}
\newcommand{\Q}{\mathbb{Q}}
\newcommand{\R}{\mathbb{R}}
\newcommand{\C}{\mathbb{C}}
\newcommand{\X}{\mathbb{X}}
\newcommand{\A}{\mathcal{A}}
\newcommand{\AQ}{\mathbb{A}}
\newcommand{\BB}{\mathbf{B}}
\newcommand{\ad}{\operatorname{ad}}
\newcommand{\Hom}{\operatorname{Hom}}
\newcommand{\gl}{\mathfrak{gl}}
\newcommand{\g}{\mathfrak{g}}
\newcommand{\sqbinom}[2]{\genfrac{[}{]}{0pt}{}{#1}{#2}}
\newcommand{\UU}{\mathbf{U}}
\newcommand{\Schur}{\mathbf{S}}
\newcommand{\aschur}{{}_\A\Schur}
\newcommand{\bil}[2]{\langle #1,\;#2 \rangle}
\renewcommand{\le}{\leqslant}
\renewcommand{\ge}{\geqslant}
\newcommand{\domeq}{\unrhd}
\newcommand{\dom}{\rhd}
\newcommand{\ldomeq}{\unlhd}
\newcommand{\ldom}{\lhd}
\newcommand{\wt}{\operatorname{wt}}
\newcommand{\rk}{\operatorname{rank}}
\title{Cellular bases of generalized $q$-Schur algebras}
\author{Stephen Doty} \address{\footnotesize Loyola University Chicago,
  Chicago, Illinois 60626 U.S.A.}
\email{doty@math.luc.edu, tonyg@math.luc.edu}
\author{Anthony Giaquinto}
\begin{document}
\begin{abstract}\noindent 
  We show that cellular bases of generalized $q$-Schur algebras can be
  constructed by gluing arbitrary bases of the cell modules and their
  dual basis (with respect to the anti-involution giving the cell
  structure) along defining idempotents. For the rational form, over
  the field $\Q(v)$ of rational functions in an indeterminate $v$, our
  proof of this fact is self-contained and independent of the
  theory of quantum groups. In the general case, over a commutative
  ring $\Bbbk$ regarded as a $\Z[v,v^{-1}]$-algebra via specialization
  $v \mapsto q$ for some chosen invertible $q \in \Bbbk$, our argument
  depends on the existence of the canonical basis.
\end{abstract}
\maketitle

\section*{Introduction}\noindent
Most of this paper amounts to a self-contained study of the rational
form $\Schur(\pi)$ of a generalized $q$-Schur algebra by means of its
idempotent presentation (see \cite{DG:announce, DG:PSA,
  Doty:PGSA}). Our earlier papers studied these algebras via a
descent from the associated quantized enveloping algebra $\UU$. This
paper adopts the opposite viewpoint, taking the aforementioned
presentations as the starting point. Sections \ref{sec:prelim} --
\ref{sec:cellular} are self-contained; that is, independent of the
theory of quantum groups. We use only the given presentation of the
rational form and some basic facts (e.g., Weyl's theorem) about
semisimple Lie algebras (in the proof of Theorem
\ref{thm:simplicity}).  Our first main result is Theorem
\ref{thm:cellular}: assuming that bases of the cell modules have been
determined, a cellular basis (in the sense of Graham and Lehrer
\cite{GL}) of the algebra is obtained by gluing each cell module basis
and its dual basis (with respect to the defining anti-involution)
along the idempotent indexed by the highest weight. The cellular bases
so obtained are in general new, even in type A, and even for the
classical $q$-Schur algebras.

The second part of the paper, starting in
Section~\ref{sec:cellular-A}, is not self-contained.  In
\cite{Doty:PGSA} it was shown that the canonical basis of the divided
power $\A$-form ${}_\A\dot{\UU}$ of Lusztig's ``modified form'' of the
corresponding quantized enveloping algebra $\UU$ descends to a
cellular basis of the $\A$-form of the $q$-Schur algebra.  Here $\A =
\Z[v,v^{-1}]$. Starting in Section~\ref{sec:cellular-A} we assume the
results of \cite{Doty:PGSA} in order to know that the $\A$-form
${}_\A\Schur(\pi)$ of the algebra $\Schur(\pi)$ is free as an
$\A$-module, and in order to identify the kernels of various
homomorphisms to smaller $q$-Schur algebras. Our second main result is
Corollary \ref{cor:k-basis}, which extends the procedure of Theorem
\ref{thm:cellular} to the $\Bbbk$-form $\Schur_q(\pi)$ of
$\Schur(\pi)$, obtained by specializing $v$ to some chosen invertible
$q \in \Bbbk$. As an application of our approach, in
Section~\ref{sec:filtration} we give an explicit construction of
certain interesting filtrations of projective $\Schur_q(\pi)$-modules
coming from an idempotent decomposition of the regular representation.

Generalized Schur algebras $S(\pi)$ were
introduced by Donkin \cite{Donkin:SA1, Donkin:SA2, Donkin:SA3},
extending from type A to arbitrary type some of the results of Green's
monograph \cite{Green:book}. About the same time, the $q$-Schur
algebras in type A were introduced by Dipper and James \cite{DJ1, DJ2}
(see also Jimbo \cite{Jimbo}).
The generalized $q$-Schur algebras $\Schur_q(\pi)$ of this paper are
$q$-analogues of Donkin's generalized Schur algebras; they include the
classical $q$-Schur algebras in type A as a special case. In general,
$\Schur_q(\pi)$ depends on a root datum, a finite saturated set $\pi$
of dominant weights, and a chosen invertible element $q$ in the ground
ring $\Bbbk$. In this generality, their rational form $\Schur(\pi)$
first appeared \S29.2 of Lusztig's book \cite{Lusztig:book}, where
they are denoted by $\dot{\UU}/\dot{\UU}[P]$, where $P$ is the
complement of $\pi$ in the set of dominant weights. For other
approaches to generalized $q$-Schur algebras emerging around the same
time see \cite{Du-Scott} and \cite{Woodcock}. The presentation of the
rational form $\Schur(\pi)$ which forms the starting point for this
paper was obtained in \cite{Doty:PGSA} in general, extending earlier
results obtained in \cite{DG:PSA} for type A. See also \cite{DP, DGS1,
  DGS2, McGerty} for further results related to this point of view.

While our main results are obtained for the quantum case, the methods
also apply to the ``classical'' case; i.e., the original generalized
Schur algebras $S(\pi)$ as defined by Donkin.  The reader interested
only in that case should replace $\Q(v)$ by $\Q$ and $\A$ by $\Z$
throughout the exposition, making any other necessary adjustments as
needed. (One virtue of the defining presentation of $\Schur(\pi)$
used here is that it makes sense when $v=1$.)

We thank R.~Rouquier for pointing out that the quantum Serre relations
follow from the defining relations of $\Schur(\pi)$, and S.~Donkin for
pointing out a gap in an earlier version of the paper. Rouquier
directed our attention to Corollary 4.3.2 in \cite{Chriss-Ginzburg}
and Proposition B.1 in \cite{KMPY}. However, we do not need those
results --- in fact Section \ref{sec:Serre} of this paper contains a
new proof of them.

\section{Preliminaries}
\label{sec:prelim}\noindent
The definition of generalized quantized Schur algebras depends on a
chosen root datum and a saturated set $\pi$ of dominant weights. We
gather the needed background notions here.

\subsection{}\label{ss:Cartan-datum}
We fix a Cartan datum $(I, \cdot)$, consisting of a finite set $I$ and
an integer-valued symmetric bilinear form $(\nu, \nu') \mapsto \nu
\cdot \nu'$ on the free abelian group $\Z I$ generated by $I$,
satisfying:

(a) \quad $i \cdot i \in \{2,4,6, \dots\}$ for any $i \in I$; 

(b) \quad $2 \frac{i \cdot j}{i \cdot i} \in \{0, -1, -2, \dots\}$ for
any $i \ne j$ in $I$.

\noindent
We assume throughout this paper that the given Cartan datum is of
\emph{finite type}; i.e., the symmetric matrix $(i \cdot j)$ indexed
by $I \times I$ is positive definite. This is equivalent to the
requirement that the Weyl group $W$ (see \S2.1.1 in
\cite{Lusztig:book} for a precise definition of $W$ in this context)
associated to $(I, \cdot)$ is finite.

\subsection{}\label{ss:root-datum}
We assume given a \emph{root datum} $(\X,\Pi,\X^\vee,\Pi^\vee)$ of
type $(I,\cdot)$. This means that:

(a)\quad $\X$, $\X^\vee$ are finitely generated free abelian groups
connected by a bilinear pairing $\bil{\ }{\ } \colon \X^\vee \times \X
\to \Z$, such that the map $h \mapsto \bil{h}{\ }$ from $\X^\vee$ into
the dual $\Hom_\Z(\X, \Z)$ is an isomorphism.

(b)\quad $\Pi = \{ \alpha_i \colon i \in I\}$, $\Pi^\vee = \{
\alpha^\vee_i \colon i \in I\}$ are finite subsets of $\X$, $\X^\vee$
respectively. (They are called the sets of simple roots and coroots,
respectively.)

(c)\quad $\bil{\alpha^\vee_i}{\alpha_j} = 2\frac{i \cdot j}{i \cdot
  i}$ for all $i,j \in I$.

\medskip

These properties imply that the square matrix $(a_{ij})_{i,j \in
  I}$ defined by $a_{ij} = \bil{\alpha_i^\vee}{\alpha_j}$ is a
symmetrizable generalized Cartan matrix.

\subsection{}\label{d_i}
We put $d_i := \frac{i \cdot i}{2}$ for each $i \in I$. Then the
matrix $(d_i a_{ij})_{i,j \in I}$ is symmetric. The vector $(d_i)_{i
  \in I}$ is known as the symmetrizing vector; we have all $d_i = 1$
in the simply-laced case.

\subsection{}\label{ss:Lie-algebra}
The reductive finite dimensional Lie algebra $\widetilde{\g}$ attached
to the given root datum is the Lie algebra over $\Q$ generated by
elements $e_i$, $f_i$ ($i \in I$) and $h \in \X^\vee$ subject to the
relations:

(a)\quad $[h, h'] = 0$;

(b)\quad $[e_i,f_j] = \delta_{ij} \alpha_i^\vee$;

(c)\quad $[h,e_i] = \bil{h}{\alpha_i} e_i$,\quad $[h,f_i] =
-\bil{h}{\alpha_i} f_i$;

(d)\quad $(\operatorname{ad} e_i)^{1-a_{ij}}(e_j) = 0$, \quad
$(\operatorname{ad} f_i)^{1-a_{ij}}(f_j) = 0$ \quad (for $i \ne j$)

\noindent
holding for all $i,j \in I$ and all $h, h' \in \X^\vee$. This Lie
algebra will be needed only in the proof of Theorem
\ref{thm:simplicity}. (Actually, only its semisimple part $\g$, the
Lie subalgebra generated by all $e_i, f_i, h_i :=[e_i,
  f_i]=\alpha_i^\vee$ ($i \in I$) is needed there.

\subsection{}\label{ss:notation}
We will need the partial order $\domeq$ on $\X$ (the dominance order)
defined by $\lambda \domeq \mu$ if and only if $\lambda - \mu =
\sum_{i \in I} n_i \alpha_i$ where the $n_i \in \Z_{\ge 0}$ for all
$i$.  We write $\lambda \dom \mu$ if $\lambda \domeq \mu$ and $\lambda
\ne \mu$. We also write $\mu \ldomeq \lambda$ (respectively, $\mu
\ldom \lambda$) if $\lambda \domeq \mu$ (resp., $\lambda \dom \mu$).
Recall that the set $\X^+$ of dominant weights is defined by
\[
   \X^+ = \{ \lambda \in \X \colon \bil{\alpha_i^\vee}{\lambda} \ge 0,
   \text{ all } i \in I\}.
\]

\subsection{}\label{saturated}
We say that a set $\pi$ of dominant weights is \emph{saturated} if
$\pi$ contains all dominant predecessors of its elements; i.e., if
$\mu' \ldom \mu$ for $\mu' \in \X^+$ and $\mu \in \pi$ implies that
$\mu' \in \pi$.  For example, for any $\lambda \in \X^+$ the set
$\pi_\lambda = \X^+[\ldomeq \lambda] = \{\mu \in \X^+ \colon \mu
\ldomeq \lambda\}$ is saturated. In general, saturated subsets of
$\X^+$ are unions of such sets.

\subsection{}\label{ss:scalars}
Let $\Q(v)$ be the field of rational functions in an indeterminate
$v$.  For $a\in \Z$, $t\in \N$ we set
\[
   \sqbinom{a}{t} = \prod_{s=1}^t \frac{v^{a-s+1} - v^{-a+s-1}} {v^s -
   v^{-s}}.
\]
By 1.3.1(d) in \cite{Lusztig:book} this is an element of
the ring $\A = \Z[v,v^{-1}]$.  For any integer $n$ we set
\[
  [n] = \sqbinom{n}{1} = \frac{v^n - v^{-n}}{v - v^{-1}}
\]
and if $n \ge 0$ we set $[n]^! = [1]\cdots [n-1]\,[n]$ (with $[0]^!$
defined to be 1 as usual). Then it follows that
\[
\sqbinom{a}{t} = \frac{[a]^!}{[t]^!\,[a-t]^!}
\qquad \text{for all $0 \le t \le a$.}
\]

\subsection{}
We set $v_i = v^{d_i}$ for each $i \in I$, where $d_i = \frac{i\cdot
  i}{2}$ as in \ref{d_i}. More generally, given any rational function
$P \in \Q(v)$ we let $P_i$ denote the rational function obtained from
$P$ by replacing $v$ by $v_i$. In particular, this convention applies
to the notations $[n]_i$, $[n]^!_i$, and $\sqbinom{a}{t}_i$.

\section{The algebra $\Schur(\pi)$}\label{sec:Spi}\noindent
Henceforth we fix a root datum $(\X,\Pi,\X^\vee,\Pi^\vee)$ of type
$(I,\cdot)$. We always assume it is of finite type. We write $\Pi =
\{\alpha_i : i \in I\}$ and $\Pi^\vee = \{\alpha_i^\vee : i \in
I\}$. We now define the main object of study in this paper.

\subsection{Definition}\label{ss:defrels}
Let $\pi$ be a given finite saturated subset of the poset $\X^+$ of
dominant weights.  The generalized $q$-Schur algebra $\Schur(\pi)$
associated to $\pi$ is the associative algebra (with $1$) over $\Q(v)$
given by generators $E_i, F_i$ ($i \in I$) and $1_\lambda$ ($\lambda
\in W\pi$) subject to the relations:

(a)\quad $1_\lambda 1_\mu = \delta_{\lambda\mu} 1_\lambda, \qquad
\sum_{\lambda\in W\pi} 1_\lambda = 1$;

(b)\quad  $E_i F_j - F_j E_i = \delta_{ij} \sum_{\lambda\in W\pi}
[\bil{\alpha_i^\vee}{\lambda}]_i 1_\lambda$;

(c)\quad $E_i 1_\lambda = 1_{\lambda + \alpha_i} E_i$, $1_\lambda E_i
= 1_{\lambda-\alpha_i} E_i$, $F_i 1_\lambda = 1_{\lambda - \alpha_i}
F_i$, $1_\lambda F_i = F_i 1_{\lambda+\alpha_i}$

\noindent
for any $\lambda, \mu \in W\pi$, and $i,j \in I$ where in the right
hand side of relations (c) we stipulate that $1_{\lambda\pm\alpha_i} =
0$ whenever $\lambda \pm \alpha_i \notin W\pi$.

Note that it makes sense to formally set $v=1$ in the definition since
there are no denominators to cause any trouble. By doing so we get
another algebra over $\Q$, which will be denoted by $S(\pi)$. This is
the associative $\Q$-algebra defined by the same set of generators,
subject to the same relations (a)--(c), except in (b) the quantum
integer $[\bil{\alpha_i^\vee}{\lambda}]_i$ becomes the ordinary
integer $\bil{\alpha_i^\vee}{\lambda}$. We call $S(\pi)$ the classical
rational form of $\Schur(\pi)$. Later we will see that it coincides
with the rational form of a generalized Schur algebra in Donkin's
sense.

\subsection{Convention}
In the sequel we will consider expressions involving the symbol
$1_\mu$ for various $\mu \in \X$. We adopt the convention that in any
formulas involving the notation $1_\mu$ holding in some given
$\Schur(\pi)$, the symbol $1_\mu$ will be interpreted as 0 whenever
$\mu \notin W\pi$. 

In other words, we may when convenient regard $\Schur(\pi)$ as
generated by the family $1_\mu$ for all $\mu \in \X$ along with the
$E_i, F_i$ for $i \in I$, subject to the relations (a)--(c) of
\ref{ss:defrels} along with the additional relation
\[
   1_\mu = 0 \quad \text{ for any } \mu \in \X - W\pi.
\]

\subsection{}\label{ss:quotient-morphisms}
From the definition it is clear that for any saturated subset $\pi'
\subseteq \pi$ there is a natural surjective algebra homomorphism 
\begin{equation*}
  p_{\pi, \pi'} \colon \Schur(\pi) \to \Schur(\pi') 
\end{equation*}
sending generators $E_i \mapsto E_i$, $F_i \mapsto F_i$ and sending
$1_\lambda \mapsto 1_\lambda$ if $\lambda \in W\pi'$, $1_\lambda
\mapsto 0$ otherwise.

\subsection{}\label{ss:star-omega}
There is a unique $\Q(v)$-linear algebra anti-involution $*$ on
$\Schur(\pi)$ determined by the properties:

(a)\quad $(xy)^* = y^* x^*$ for all $x, y \in \Schur$,

(b)\quad $E_i^* = F_i$,\quad $F_i^* = E_i$,\quad $1_\mu^* = 1_\mu$
for any $i \in I$, $\mu \in \X$.  

\noindent
This is easily verified using the defining relations \ref{ss:defrels}.

Recall that the longest element $w_0$ of $W$ interchanges the positive
and negative roots, and thus $-w_0$ induces a permutation on the set
of simple roots. Given a saturated subset $\pi$ of $\X^+$ the set
$-w_0(\pi)$ is another saturated subset of $\X^+$. It follows
immediately from the defining relations \ref{ss:defrels} that there is
a unique algebra isomorphism $\omega = \omega_\pi \colon \Schur(\pi)
\to \Schur(-w_0(\pi))$ such that

(c)\quad  $\omega(E_i)=F_i$,\quad $\omega(F_i)=E_i$, \quad
$\omega(1_\mu)=1_{-\mu}$ for all $i \in I$, $\mu \in \X$. 

\noindent
Note that the inverse of $\omega_{\pi}$ is $\omega_{-w_0(\pi)}$.

\subsection{The $\A$-form  ${}_\A\Schur(\pi)$} \label{ss:divided-powers}
Until further notice we fix $\pi$ and write $\Schur$ for
$\Schur(\pi)$.  For any $i \in I$, $a \in \Z$ we introduce the
quantized divided powers
\[
E_i^{(a)} = \frac{E_i^a}{[a]_i^!},\qquad F_i^{(a)} =
\frac{F_i^a}{[a]_i^!} \qquad (a \ge 0).
\] 
If $a<0$ we define $E_i^{(a)} = F_i^{(a)} = 0$.  Let $\A =
\Z[v,v^{-1}]$. The $\A$-form ${}_\A\Schur = {}_\A\Schur(\pi)$ of
$\Schur(\pi)$ is the $\A$-subalgebra generated by all $E_i^{(a)}$,
$F_i^{(a)}$, and $1_\lambda$ (for $i \in I$, $a \ge 0$, $\lambda \in
W\pi$). 

\subsection{}\label{ss:A-quotient-morphisms}
For a saturated subset $\pi'$ of $\pi$, the restriction to ${}_\A
\Schur(\pi)$ of the homomorphism $p_{\pi, \pi'}$ defined in
\ref{ss:quotient-morphisms} gives a surjective algebra homomorphism
that we denote by ${}_\A p_{\pi,\pi'}$. It maps ${}_\A \Schur(\pi)$
onto ${}_\A \Schur(\pi')$.

The following consequences of the defining relations will be needed
later (compare with 23.1.3 in \cite{Lusztig:book}). Note that the sums
in parts (b) and (c) below are finite.

\begin{lem}\label{lem:commutation} 
 Let $\Schur = \Schur(\pi)$.  For any $a,b \ge 0$, $\mu \in
 W\pi$, the following identities hold in $\Schur$ and ${}_\A\Schur$:\rm

 (a)\quad $E_i^{(a)} 1_\mu = 1_{\mu+a\alpha_i} E_i^{(a)}$\,;\quad 
 $F_i^{(b)} 1_\mu = 1_{\mu-b\alpha_i} F_i^{(b)}$

 (b)\quad $E_i^{(a)} F_i^{(b)}1_\mu = \textstyle \sum_{t \ge 0}
 \sqbinom{a-b+\bil{\alpha_i^\vee}{\mu}}{t}_i \, F_i^{(b-t)} E_i^{(a-t)}
 1_\mu$

 (c)\quad $F_i^{(b)} E_i^{(a)}1_\mu = \textstyle \sum_{t \ge 0}
 \sqbinom{b-a-\bil{\alpha_i^\vee}{\mu}}{t}_i\, E_i^{(a-t)} F_i^{(b-t)}
 1_\mu$.
\end{lem}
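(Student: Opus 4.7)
The plan is to work first in $\Schur = \Schur(\pi)$; once the three identities are established there, they transfer verbatim to ${}_\A\Schur$, since the divided powers $E_i^{(a)}, F_i^{(b)}$ and idempotents $1_\mu$ lie in ${}_\A\Schur$ by definition (\ref{ss:divided-powers}) and every coefficient $[\cdot]_i, \sqbinom{\cdot}{t}_i$ lies in $\A$ (\ref{ss:scalars}).

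Part (a) is immediate by induction on $a$: the defining relation \ref{ss:defrels}(c) gives $E_i 1_\mu = 1_{\mu+\alpha_i} E_i$, so $E_i^a 1_\mu = 1_{\mu+a\alpha_i} E_i^a$, and dividing by $[a]_i^!$ yields the divided-power form; the $F_i$ statement is symmetric. Part (c) will follow from (b) by applying the algebra isomorphism $\omega \colon \Schur(\pi) \to \Schur(-w_0(\pi))$ from \ref{ss:star-omega} (which sends $E_i^{(a)} \leftrightarrow F_i^{(a)}$ and $1_\mu \mapsto 1_{-\mu}$) to identity (b); after substituting $\mu' = -\mu$, using $\bil{\alpha_i^\vee}{-\mu} = -\bil{\alpha_i^\vee}{\mu}$, and relabeling $a \leftrightarrow b$, one recovers exactly (c) in $\Schur(-w_0(\pi))$. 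Since $-w_0$ is a bijection on finite saturated subsets of $\X^+$, this gives (c) in every $\Schur(\pi')$. So the real work is (b).

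I would prove (b) by induction on $a$; the base case $a=0$ is trivial, as only the $t=0$ term of the right-hand side survives ($E_i^{(-t)} = 0$ for $t > 0$). The key intermediate step is the $a = 1$ identity
\[
E_i F_i^{(b)} 1_\mu = F_i^{(b)} E_i 1_\mu + [\bil{\alpha_i^\vee}{\mu} - b + 1]_i\, F_i^{(b-1)} 1_\mu,
\]
which I would prove by a secondary induction on $b$: the base case follows directly from the defining relation \ref{ss:defrels}(b) (with $j=i$), and the inductive step uses the elementary $q$-number identity $[n]_i + [n-b]_i[b-1]_i = [n-b+1]_i[b]_i$, where $n = \bil{\alpha_i^\vee}{\mu}$, which is verified directly from $[a]_i = (v_i^a - v_i^{-a})/(v_i - v_i^{-1})$.

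For the induction step on $a$, write $E_i^{(a)} = [a]_i^{-1} E_i E_i^{(a-1)}$, apply the induction hypothesis to $E_i^{(a-1)} F_i^{(b)} 1_\mu$, and then commute the remaining $E_i$ past each summand $F_i^{(b-t)} E_i^{(a-1-t)} 1_\mu$ using the $a=1$ case together with part (a). I expect the main obstacle to be the combinatorial bookkeeping in this final step: as $E_i$ is pushed through the divided powers the weight pairing $\bil{\alpha_i^\vee}{\mu}$ is shifted by $2(a-1-t)$, so the coefficient of each term $F_i^{(b-s)} E_i^{(a-s)} 1_\mu$ emerges as a sum of products of $q$-binomials, and reassembling it as the single claimed $\sqbinom{a-b+\bil{\alpha_i^\vee}{\mu}}{t}_i$ requires repeated application of the $q$-Pascal identities from 1.3.1 of \cite{Lusztig:book}. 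This is the only nontrivial calculation in the entire argument.
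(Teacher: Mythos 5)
Your proposal is correct and follows essentially the same route as the paper: part (a) from the defining relations, part (b) by an elementary double induction (the paper leaves this to the reader; your one-step formula $E_iF_i^{(b)}1_\mu = F_i^{(b)}E_i1_\mu + [\bil{\alpha_i^\vee}{\mu}-b+1]_i F_i^{(b-1)}1_\mu$ and the $q$-number identity $[n]_i + [n-b]_i[b-1]_i = [n-b+1]_i[b]_i$ are exactly right), and part (c) by applying the isomorphism $\omega$ to (b). The only remaining work, reassembling the coefficients via $q$-Pascal identities in the induction on $a$, is routine and is likewise suppressed in the paper's own proof.
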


\begin{proof}
Identity (a) is an obvious consequence of relation \ref{ss:defrels}(c)
and the definition of quantized divided powers in
\ref{ss:divided-powers}. One proves (b) by an elementary double
induction argument, left to the reader. Identity (c) can be proved by
a similar argument, or one may use the algebra isomorphism
$\omega_{\pi'}$ (see \ref{ss:star-omega}) taking $\Schur(\pi')$ onto
$\Schur(\pi)$, where $\pi' = -w_0(\pi)$.  By applying $\omega_{\pi'}$
to identity (b) for the algebra $\Schur(\pi')$, interchanging $a$ and
$b$, and replacing $-\mu$ by $\mu$, one obtains identity (c) for the
algebra $\Schur(\pi)$.
\end{proof}

\subsection{Plus, minus, and zero parts} \label{ss:plus-minus} 
Let $\Schur^+$ (respectively, $\Schur^-$) be the $\Q(v)$-subalgebra of
$\Schur$ generated by the $E_i$ (resp., the $F_i$) for $i \in I$.  Put
${}_\A\Schur^+ = {}_\A\Schur \cap \Schur^+$ and ${}_\A\Schur^- =
{}_\A\Schur \cap \Schur^-$. Clearly ${}_\A\Schur^+$ (respectively,
${}_\A\Schur^-$) is the $\A$-subalgebra of $\Schur$ generated by all
divided powers of the $E_i$ (resp., the $F_i$) for $i\in I$.

Let $\Schur^0$ be the $\Q(v)$-subalgebra of $\Schur$ generated by the
idempotents $1_\mu$ for $\mu \in W\pi$. Clearly we have the vector
space decomposition 
\[
  \textstyle \Schur^0 = \bigoplus_{\lambda \in W\pi} \Q(v)
1_\lambda. 
\]
Put ${}_\A\Schur^0 = {}_\A\Schur \cap \Schur^0$.  Then we have the
$\A$-module decomposition ${}_\A\Schur^0 = \bigoplus_{\lambda \in
  W\pi} \A 1_\lambda$.

The following result is an analogue for the algebras $\Schur(\pi)$ of
the triangular decomposition of a universal or quantized enveloping
algebra.

\begin{lem}[triangular decomposition] \label{lem:td}
  Let $\Schur = \Schur(\pi)$. We have $\Schur = \Schur^- \Schur^0
  \Schur^+$ and ${}_\A \Schur = {}_\A \Schur^- {}_\A \Schur^0 {}_\A
  \Schur^+$. The same equalities hold if the three factors on the
  right hand side are permuted in any order.
\end{lem}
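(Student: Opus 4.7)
My plan is to prove the first equality $\Schur=\Schur^-\Schur^0\Schur^+$ by a closure argument, and then derive the other five orderings by the relations already in hand.

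Let $M = \Schur^-\Schur^0\Schur^+$, viewed as a $\Q(v)$-subspace of $\Schur$. Because $1 = \sum_\lambda 1_\lambda \in \Schur^0 \subseteq M$, it suffices to show that $M$ is closed under left multiplication by each algebra generator $F_i$, $1_\mu$, and $E_i$. Closure under $F_i$ is immediate, since $\Schur^-\cdot\Schur^-\subseteq\Schur^-$. For $1_\mu$, I use the relation $1_\mu F_j^{(a)} = F_j^{(a)} 1_{\mu+a\alpha_j}$ from \ref{ss:defrels}(c) (or Lemma \ref{lem:commutation}(a)) iteratively to show, by induction on degree, that $1_\mu y = y h_y$ for some $h_y \in \Schur^0$ whenever $y\in\Schur^-$; hence $1_\mu\cdot(yhz)=y(h_yh)z\in M$. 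A parallel computation shows $\Schur^+\cdot\Schur^0\subseteq\Schur^0\Schur^+$, which will be needed below.

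The main step is closure under left multiplication by $E_i$. Since we have already shown $\Schur^-\cdot M\subseteq M$ and $M\cdot\Schur^0\Schur^+\subseteq M$ (using $\Schur^+\cdot\Schur^0\subseteq\Schur^0\Schur^+$), it suffices to show $E_i\cdot y\in M$ for every $y\in\Schur^-$, which I do by induction on the total $F$-degree of $y$. The base case $y=1$ is trivial since $E_i\in\Schur^+$. For the inductive step write $y=F_j^{(b)}y'$. If $j\neq i$, relation \ref{ss:defrels}(b) gives $E_iF_j=F_jE_i$, so $E_iy=F_j^{(b)}(E_iy')$, which lies in $\Schur^-\cdot M\subseteq M$ by induction. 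If $j=i$, I apply Lemma \ref{lem:commutation}(b) with $a=1$ and sum over $\lambda\in W\pi$ (using $\sum_\lambda 1_\lambda=1$) to obtain
\[
E_iF_i^{(b)} \;=\; F_i^{(b)}E_i \;+\; F_i^{(b-1)}h, \qquad h = \sum_{\lambda\in W\pi}[1-b+\bil{\alpha_i^\vee}{\lambda}]_i\,1_\lambda\in\Schur^0.
\]
Thus $E_iy = F_i^{(b)}(E_iy') + F_i^{(b-1)}(hy')$; the first summand lies in $\Schur^-\cdot M$, and after rewriting $hy'\in\Schur^-\Schur^0$ via the weight-shift identity above, the second summand lies in $\Schur^-\cdot\Schur^-\Schur^0\subseteq M$. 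Hence $M=\Schur$. The entire argument stays inside the $\A$-form because the only coefficients introduced are quantum binomials $\sqbinom{\cdot}{\cdot}_i\in\A$ and the divided powers $E_i^{(a)},F_i^{(a)}$ lie in ${}_\A\Schur$; so the same reasoning yields ${}_\A\Schur={}_\A\Schur^-{}_\A\Schur^0{}_\A\Schur^+$.

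For the remaining five orderings I use only cosmetic rearrangements. Writing an element $yhz\in\Schur^-\Schur^0\Schur^+$ and pushing $h\in\Schur^0$ leftward past $y$ using the weight-shift identity yields $\Schur^-\Schur^0\Schur^+\subseteq\Schur^0\Schur^-\Schur^+$, and pushing $h$ rightward past $z$ similarly yields $\Schur^-\Schur^0\Schur^+\subseteq\Schur^-\Schur^+\Schur^0$; combined with $\Schur=\Schur^-\Schur^0\Schur^+$ this gives the two orderings in which $\Schur^0$ sits at an end. To interchange $\Schur^-$ and $\Schur^+$, I either repeat the induction above with the roles of $E_i$ and $F_i$ exchanged (replacing Lemma \ref{lem:commutation}(b) by (c)), or invoke the algebra isomorphism $\omega\colon\Schur(-w_0\pi)\to\Schur(\pi)$ of \ref{ss:star-omega}, which sends $\Schur(-w_0\pi)^\mp$ to $\Schur(\pi)^\pm$ and fixes the analogue of $\Schur^0$; either route produces $\Schur=\Schur^+\Schur^0\Schur^-$, from which the two remaining orderings follow by the same weight-shift trick. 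The single nontrivial obstacle is the $E_i$-closure step, and specifically the care needed to confirm that each correction term arising from Lemma \ref{lem:commutation}(b) lives in $M$; once that is nailed down, all six orderings (classical and $\A$-integral versions) drop out from the commutation formulas already recorded.
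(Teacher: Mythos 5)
Your argument is correct and reaches the stated conclusion by a genuinely different (though closely related) route. The paper's proof is a straightening argument: it starts from an arbitrary word $x_1\cdots x_m 1_\mu$ in the generators, counts the number $d$ of pairs in which an $E$ precedes an $F$, and shows by induction on $d$ --- using $E_iF_j=F_jE_i$ for $i\ne j$ and $E_iF_i1_\mu=F_iE_i1_\mu+[\bil{\alpha_i^\vee}{\mu}]_i1_\mu$ --- that every word can be rewritten with $d=0$; the remaining orderings are then obtained by commuting idempotents, exactly as in your last paragraph. You instead show that the candidate subspace $M=\Schur^-\Schur^0\Schur^+$ is a left ideal containing $1$, which reduces everything to closure under left multiplication by each generator, the only nontrivial case being $E_i$, handled by induction on $F$-degree. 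Both proofs rest on the identical commutation identities; your packaging makes the induction variable explicit and avoids having to argue that the rewriting process terminates, which is a mild gain in transparency.

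One caution about the $\A$-form, where your writeup has a genuine elision. The algebra ${}_\A\Schur$ is generated over $\A$ by \emph{all} divided powers $E_i^{(a)}$, $F_i^{(a)}$, not by $E_i$, $F_i$ alone, and closure of ${}_\A M={}_\A\Schur^-\,{}_\A\Schur^0\,{}_\A\Schur^+$ under left multiplication by $E_i$ does not yield closure under $E_i^{(a)}=E_i^a/[a]_i^!$, because $[a]_i^!$ is not invertible in $\A$. So the observation that ``the only coefficients introduced lie in $\A$'' is not the relevant point: you must rerun the $E$-closure induction for $E_i^{(a)}$ with arbitrary $a$, using Lemma \ref{lem:commutation}(b) in full generality and strengthening the inductive hypothesis to ``$E_i^{(a)}y\in{}_\A M$ for all $a\ge 0$,'' so that the correction terms $F_i^{(b-t)}E_i^{(a-t)}1_\mu$ (now a genuine sum over $t$, not a single $\Schur^0$-term) are absorbed by the inductive appeal. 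This repair is routine --- the paper leaves the same details to the reader --- but as written your $\A$-form paragraph asserts rather than proves the needed closure.
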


\begin{proof}
  By the defining relations in \ref{ss:defrels} the algebra $\Schur$
  is spanned by elements of the form
\begin{equation}\label{eq:P}
  P = x_1 \cdots x_m 1_\mu
\end{equation}
where $x_1, \dots, x_m \in \{E_i, F_i \colon i \in I\}$ and $\mu \in
W\pi$. In any such product, we can use relation \ref{ss:defrels}(c) to
commute the idempotent $1_\mu$ to any desired position in the product
$P$, at the expense of replacing $\mu$ by some other weight
$\mu'$. For an element $P_0$ in the form \eqref{eq:P} let $d(P_0)$ be
the number of pairs $(j,j')$ such that $1 \le j \le j' \le m$ and $x_j
\in \{E_i \colon i \in I \}$, $x_{j'} \in \{F_i \colon i \in I \}$. We
claim that $P_0$ can be rewritten as a finite $\Q(v)$-linear
combination of elements of the form \eqref{eq:P} for which $d =
0$. This is proved by induction on $d$, using the following
consequences of the defining relations \ref{ss:defrels}(b):
\begin{equation}
 E_i F_j = F_j E_i\ (i \ne j); \quad
 E_i F_i 1_\mu = F_iE_i 1_\mu + [\bil{\alpha_i^\vee}{\mu}]_i 1_\mu .
\end{equation}
Thus it follows that all elements of $\Schur$ can be expressed as
linear combinations of products $P$ of the form \eqref{eq:P} with
$d(P)=0$. Since elements of the form \eqref{eq:P} with $d=0$ have all
occurrences of $F_j$ appearing before any $E_i$, it follows that
$\Schur = \Schur^- \Schur^+ \Schur^0$. To get the equality $\Schur =
\Schur^+ \Schur^- \Schur^0$ just repeat the argument with the $E$'s
and $F$'s interchanged. Finally, in any product of the form
\eqref{eq:P} with $d(P)=0$ we can use relation \ref{ss:defrels}(c) to
commute the idempotent to the left of all the $E_i$'s and the right of
all the $F_j$'s, so we obtain the equality $\Schur = \Schur^- \Schur^0
\Schur^+$. The other variations for $\Schur$ are obtained similarly.

To prove the claims for the $\A$-form ${}_\A\Schur$, one replaces
elements of the form \eqref{eq:P} with elements of the form 
\begin{equation}
  P = x_1 \cdots x_m 1_\mu
\end{equation}
where now $x_1, \dots, x_m \in \{E_i^{(b)}, F_i^{(b)} \colon i \in I,
b \ge 0\}$ and $\mu \in W\pi$. The argument then proceeds as above,
except that we must use Lemma \ref{lem:commutation} to do the necessary
rewriting. The details are left to the reader.
\end{proof}

\subsection{Weights and biweights}\label{ss:weights}
Let $\lambda, \mu \in W\pi$. We say that any $x \in \Schur$ is a
weight vector of left weight $\lambda$ if $1_\lambda x = x$, a weight
vector of right weight $\mu$ if $x1_\mu = x$, and a weight vector of
biweight $(\lambda, \mu)$ if $1_\lambda x 1_\mu = x$. Every weight
vector of biweight $(\lambda, \mu)$ has left weight $\lambda$ and
right weight $\mu$. Any product of generators in which at least one
factor is a generating idempotent $1_\nu$ is a weight vector of some
biweight.

The orthogonal idempotent decomposition of the identity given in
defining relation \ref{ss:defrels}(a) implies that if $M$ is any
subalgebra of $\Schur$ (resp., ${}_\A\Schur$) then 
\begin{equation}\label{eq:subspace-dec}
   \textstyle M = \bigoplus_{\lambda \in W\pi} 1_\lambda M =
   \bigoplus_{\mu \in W\pi} M1_\mu = \bigoplus_{\lambda,\mu \in
     W\pi} 1_\lambda M 1_\mu .
\end{equation}
In particular, this applies to $\Schur$ and ${}_\A\Schur$ themselves,
which therefore have the direct sum decompositions:
\begin{equation}\label{eq:biweight-S}
   \textstyle \Schur = \bigoplus_{\lambda, \mu \in W\pi} 1_\lambda \Schur
   1_{\mu}, \quad {}_\A\Schur = \bigoplus_{\lambda, \mu \in W\pi} 1_\lambda
   ({}_\A\Schur) 1_{\mu}.
\end{equation}
These decompositions are the biweight space decompositions of $\Schur$
and ${}_\A \Schur$.  Put $\Z \Pi = \sum \Z \alpha_i \subseteq \X$ (the
root lattice). Then we also have the decompositions:
\begin{equation}\label{eq:root-dec}
  \textstyle \Schur = \bigoplus_{\nu \in \Z \Pi} \Schur_\nu, \quad
  {}_\A\Schur = \bigoplus_{\nu \in \Z \Pi} {}_\A\Schur_\nu
\end{equation}
defined by the requirements: $\Schur_\nu\Schur_{\nu'} \subseteq
\Schur_{\nu+\nu'}$, $E_i \in \Schur_{\alpha_i}$, $F_i \in
\Schur_{-\alpha_i}$, and $1_\mu \in \Schur_{0}$.  Here ${}_\A
\Schur_\nu = {}_\A\Schur \cap \Schur_\nu$. Combining
\eqref{eq:root-dec} with the biweight space decompositions
\eqref{eq:biweight-S} we obtain the finer decompositions
\begin{equation}\label{eq:finer-dec}
  \textstyle \Schur = \bigoplus_{\nu \in \Z \Pi;\, \lambda, \mu \in
    W\pi} 1_\lambda \Schur_\nu 1_{\mu}, \quad {}_\A\Schur =
  \bigoplus_{\nu \in \Z \Pi;\, \lambda, \mu \in W\pi} 1_\lambda
  ({}_\A\Schur_\nu) 1_{\mu}.
\end{equation}
It is easy to see that in this decomposition $1_\lambda \Schur_\nu
1_{\mu} = 0$ (and thus $1_\lambda ({}_\A \Schur_\nu) 1_{\mu} = 0$)
unless $\lambda - \mu = \nu$.

Next we want to obtain a description of the algebras $\Schur^+$,
$\Schur^-$ and their $\A$-forms. 

\begin{lem}\label{lem:plus-minus}
  Let $\N\Pi = \{ \sum_{i \in I} n_i\alpha_i : 0 \le n_i \in \Z\}
  \subset \Z\Pi$.  We have gradings:
  
  (a)\quad $\Schur^+ = \bigoplus_{\nu \in \N\Pi}
  \Schur^+_\nu; \qquad \Schur^- = \bigoplus_{\nu \in \N\Pi}
  \Schur^-_{-\nu}$

  (b)\quad ${}_\A\Schur^+ = \bigoplus_{\nu \in \N\Pi}
  {}_\A\Schur^+_\nu; \qquad {}_\A\Schur^- = \bigoplus_{\nu \in \N\Pi}
  {}_\A\Schur^-_{-\nu}$

  \noindent
  defined by assigning $E_i$ to degree $\alpha_i$ and $F_i$ to degree
  $-\alpha_i$, for each $i \in I$. Each component $\Schur^+_\nu$
  (resp., $\Schur^-_{-\nu}$) is spanned over $\Q(v)$ by the set of all
  products $E_{j_1} \cdots E_{j_r}$ (resp., $F_{j_r} \cdots F_{j_1}$)
  such that the number of $j_k$ equal to $i$ is $n_i$. Each component
  ${}_\A\Schur^+_\nu$ (resp., ${}_\A\Schur^-_{-\nu}$) is equal to the
  intersection of $\Schur^+_\nu$ (resp., $\Schur^-_{-\nu}$) with
  ${}_\A\Schur$. 

  If there is no pair $\lambda, \mu \in W\pi$ such that $\lambda-\mu =
  \nu$ then $\Schur^+_\nu = 0 = \Schur^-_{-\nu}$. In particular,
  $\Schur^+$ and $\Schur^-$ are finite dimensional over $\Q(v)$, and
  ${}_\A\Schur^+$ and ${}_\A\Schur^-$ are finitely generated over
  $\A$.
\end{lem}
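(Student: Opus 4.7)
The plan is to exploit the root grading \eqref{eq:root-dec} by restricting it to the subalgebras $\Schur^\pm$ and their $\A$-forms, and then to extract vanishing and finiteness from the finer biweight--root decomposition \eqref{eq:finer-dec}.

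First I would observe that $E_i \in \Schur_{\alpha_i}$ and $F_i \in \Schur_{-\alpha_i}$ by the definition of the root grading, so any monomial $E_{j_1} \cdots E_{j_r}$ lies in $\Schur_\nu$ with $\nu = \alpha_{j_1}+\cdots+\alpha_{j_r} \in \N\Pi$, and similarly $F_{j_r}\cdots F_{j_1}\in \Schur_{-\nu}$. Since $\Schur^+$ (resp.\ $\Schur^-$) is spanned by such monomials, we have $\Schur^\pm \subseteq \bigoplus_{\nu \in \N\Pi} \Schur_{\pm\nu}$, and intersecting with the root decomposition gives grading (a), with $\Schur^\pm_{\pm\nu} := \Schur^\pm\cap\Schur_{\pm\nu}$. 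The spanning description is then immediate: a monomial lies in $\Schur^+_\nu$ with $\nu=\sum n_i\alpha_i$ exactly when each index $i\in I$ occurs $n_i$ times. The grading (b) and its spanning set are obtained by the same argument, with monomials replaced by divided-power monomials $E_{i_1}^{(a_1)}\cdots E_{i_r}^{(a_r)}$ and using that $E_i^{(a)}\in\Schur_{a\alpha_i}$. The claimed identity ${}_\A\Schur^+_\nu = \Schur^+_\nu\cap{}_\A\Schur$ is then a tautology, since by definition ${}_\A\Schur^+_\nu = ({}_\A\Schur\cap\Schur^+)\cap\Schur_\nu = {}_\A\Schur\cap\Schur^+_\nu$ (because $\Schur^+_\nu\subseteq\Schur^+$).

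For the vanishing: \eqref{eq:finer-dec} gives $1_\lambda \Schur_\nu 1_\mu = 0$ unless $\lambda-\mu=\nu$. Given $x \in \Schur^+_\nu$, I would write $x = \sum_{\mu\in W\pi} x\,1_\mu$ using $\sum_\mu 1_\mu=1$, and then $x\,1_\mu = \sum_\lambda 1_\lambda x\,1_\mu = 1_{\mu+\nu}\,x\,1_\mu$, so that $x = \sum_{\mu\in W\pi} 1_{\mu+\nu}\,x\,1_\mu$. Under the convention $1_\kappa = 0$ for $\kappa\notin W\pi$, every summand vanishes whenever no $\mu\in W\pi$ satisfies $\mu+\nu\in W\pi$, forcing $x=0$; the case of $\Schur^-_{-\nu}$ is symmetric. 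Finiteness of $\Schur^\pm$ over $\Q(v)$ is then immediate: $W\pi$ is finite because $W$ (finite type) and $\pi$ are finite, so only finitely many $\nu$ give nonzero components, and each $\Schur^+_\nu$ for $\nu=\sum n_i\alpha_i$ is spanned by the $\binom{r}{n_1,\dots}$ monomials of length $r=\sum n_i$ with the prescribed multiplicities, a finite number.

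The only step that is not pure bookkeeping is the finite generation of the $\A$-components, since the divided-power monomials spanning ${}_\A\Schur^+_\nu$ are \emph{a priori} of unbounded length. The key additional input I would use is the identity $E_i^{(a)}E_i^{(b)} = \sqbinom{a+b}{a}_i E_i^{(a+b)}$ (and its $F$-analogue): by coalescing consecutive factors with equal index, one may reduce to monomials $E_{i_1}^{(a_1)}\cdots E_{i_r}^{(a_r)}$ with $i_k\neq i_{k+1}$, whence $a_k\le n_{i_k}$ and $r\le\sum n_i$. This bounds the number of spanning divided-power monomials in each nonzero component, giving a finite $\A$-generating set for ${}_\A\Schur^+$ (and similarly for ${}_\A\Schur^-$). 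This coalescing bound is the main obstacle and the only place where something more than the root grading is used.
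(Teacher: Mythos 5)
Your argument is correct and follows essentially the same route as the paper: restrict the root grading \eqref{eq:root-dec} to $\Schur^\pm$, and use the orthogonal idempotent decomposition together with the fact that $1_\lambda \Schur_\nu 1_\mu = 0$ unless $\lambda-\mu=\nu$ to obtain the vanishing and hence the finiteness statements. The coalescing identity $E_i^{(a)}E_i^{(b)}=\sqbinom{a+b}{a}_i E_i^{(a+b)}$ is a worthwhile addition: the paper merely asserts that each ${}_\A\Schur^+_\nu$, ${}_\A\Schur^-_{-\nu}$ is finitely generated over $\A$, and your bound on the spanning divided-power monomials supplies the justification that is left implicit there.
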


\begin{proof}
The statements about the graded decompositions (in the first
paragraph) are clear. For the last statement, let $x \in
\Schur^+_\nu$, $y \in \Schur^-_{-\nu}$. Then 
\begin{align*}
  x &= \textstyle (\sum_{\lambda} 1_\lambda) x (\sum_\mu 1_\mu) =
  \sum_{\lambda, \mu} 1_\lambda x 1_\mu = \sum_{\lambda, \mu}
  1_\lambda 1_{\mu+\nu}\, x = \delta_{\lambda,\, \mu+\nu}\,1_\lambda x \\
\intertext{and similarly} 
  y &= \textstyle (\sum_{\mu} 1_\mu) y (\sum_\lambda 1_\lambda) =
  \sum_{\lambda, \mu} 1_\mu y 1_\lambda = \sum_{\lambda, \mu} 1_\mu
  1_{\lambda-\nu}\, y = \delta_{\mu,\, \lambda-\nu}\,1_\mu y.
\end{align*}
This proves that $\Schur^+_\nu = 0 = \Schur^-_{-\nu}$ unless
$\lambda-\mu=\nu$ for some $\lambda, \mu \in W\pi$. In particular,
only finitely many of the summands in (a), (b) are non-zero. Since
each $\Schur^+_\nu$, $\Schur^-_{-\nu}$ is finite dimensional over
$\Q(v)$, and each ${}_\A\Schur^+_\nu$, ${}_\A\Schur^-_{-\nu}$ is
finitely generated over $\A$, the claims in the last paragraph follow.
\end{proof}

\begin{lem}\label{lem:fd}
  The algebra $\Schur = \Schur(\pi)$ is finite dimensional over
  $\Q(v)$ and its $\A$-form ${}_\A\Schur = {}_\A \Schur(\pi)$ is
  finitely generated as an $\A$-module. Furthermore, the generators
  $E_i, F_i$ ($i \in I$) are nilpotent in $\Schur$. 
\end{lem}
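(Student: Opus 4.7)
The plan is to deduce everything from the triangular decomposition (Lemma \ref{lem:td}) together with Lemma \ref{lem:plus-minus} and the observation that $W\pi$ is finite (since $W$ and $\pi$ are both finite).

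First I would handle the finite-dimensionality of $\Schur$. The zero part $\Schur^0 = \bigoplus_{\lambda \in W\pi} \Q(v)\,1_\lambda$ has dimension $|W\pi| < \infty$. Lemma \ref{lem:plus-minus} says that $\Schur^+ = \bigoplus_{\nu \in \N\Pi} \Schur^+_\nu$ and $\Schur^-_{-\nu}$ vanishes unless $\nu = \lambda - \mu$ for some $\lambda, \mu \in W\pi$; since $W\pi$ is finite, only finitely many $\nu \in \N\Pi$ satisfy this, and each nonzero graded piece is finite dimensional (being spanned by words in the $E_i$ of a prescribed content), so $\Schur^+$ and $\Schur^-$ are both finite dimensional over $\Q(v)$. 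The triangular decomposition $\Schur = \Schur^-\Schur^0\Schur^+$ then gives $\dim_{\Q(v)} \Schur \le \dim \Schur^- \cdot \dim \Schur^0 \cdot \dim \Schur^+ < \infty$.

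For the $\A$-form, the same argument works verbatim with $\Q(v)$ replaced by $\A$: ${}_\A\Schur^0 = \bigoplus_{\lambda \in W\pi} \A\,1_\lambda$ is finitely generated, ${}_\A\Schur^\pm$ are finitely generated by Lemma \ref{lem:plus-minus}(b), and the $\A$-form of the triangular decomposition in Lemma \ref{lem:td} yields that ${}_\A\Schur$ is finitely generated as an $\A$-module.

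Finally, for nilpotency of the generators, note that $E_i^n \in \Schur^+_{n\alpha_i}$ and $F_i^n \in \Schur^-_{-n\alpha_i}$. Since the set $\{\lambda - \mu : \lambda, \mu \in W\pi\} \subset \Z\Pi$ is finite, there exists $N$ such that $n\alpha_i$ does not lie in this set for any $n \ge N$ and any $i \in I$. By Lemma \ref{lem:plus-minus}, $\Schur^+_{n\alpha_i} = 0 = \Schur^-_{-n\alpha_i}$ for all such $n$, so $E_i^n = F_i^n = 0$. There is no real obstacle here; the only point to verify carefully is that $W\pi$ is finite, which follows from $W$ being finite (because the Cartan datum is of finite type, \ref{ss:Cartan-datum}) and $\pi$ being finite by hypothesis.
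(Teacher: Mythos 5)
Your proof is correct and follows essentially the same route as the paper: finite dimensionality (and finite generation of the $\A$-form) from Lemma \ref{lem:plus-minus} combined with the triangular decomposition of Lemma \ref{lem:td}. For nilpotency the paper invokes Lemma \ref{lem:commutation}(a) directly to get $E_i^a 1_\lambda = 1_{\lambda+a\alpha_i}E_i^a = 0$ for large $a$ and then sums over $\lambda$, whereas you use the vanishing of the graded pieces $\Schur^+_{n\alpha_i}$ from Lemma \ref{lem:plus-minus}; these are the same underlying observation packaged differently, and both are valid.
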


\begin{proof}
 The first claim follows from the preceding lemma and the triangular
 decomposition (Lemma \ref{lem:td}). The nilpotence of $E_i, F_i$
 follow from Lemma \ref{lem:commutation}(a), which implies that for
 any $\lambda \in W\pi$ we have $F_i^a 1_\lambda = E_i^a 1_\lambda =
 0$ for $a$ large enough. Then of course $F_i^a = F_i^a \cdot 1 =
 \sum_{\lambda \in W\pi} F_i^a 1_\lambda = 0$, and similarly for
 $E_i^a$.
\end{proof}

The following important observation will be needed later. 

\begin{lem}\label{lem:idem-straightening}
  Let $\Schur = \Schur(\pi)$, and suppose that $\lambda_0$ is a maximal
  element in $\pi$.  If $w = s_{i_r}s_{i_{r-1}} \cdots s_{i_1}$ is a
  reduced expression for $w \in W$ then we have
  \begin{equation*}
  1_{w(\lambda_0)} = F_{i_r}^{(a_r)} F_{i_{r-1}}^{(a_{r-1})} \cdots
  F_{i_1}^{(a_1)} 1_{\lambda_0} E_{i_1}^{(a_1)} \cdots
  E_{i_{r-1}}^{(a_{r-1})} E_{i_r}^{(a_r)}
  \end{equation*}
  where $a_j = \bil{\alpha_{i_j}^\vee}{s_{i_{j-1}} \cdots
    s_{i_1}(\lambda_0)}$, for each $j=1, \dots, r$.
\end{lem}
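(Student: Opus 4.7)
The plan is to induct on $r$. The base case $r = 0$ is the trivial statement $1_{\lambda_0} = 1_{\lambda_0}$. For the inductive step, write $w' = s_{i_{r-1}} \cdots s_{i_1}$, $\mu = w'(\lambda_0)$, $i = i_r$, $a = a_r$, and apply the induction hypothesis to rewrite the claimed right-hand side as $F_i^{(a)} 1_\mu E_i^{(a)}$. The proof then reduces to the rank-one identity
\begin{equation*}
1_{s_i(\mu)} = F_i^{(a)} 1_{\mu} E_i^{(a)}, \qquad a = \bil{\alpha_i^\vee}{\mu}.
\end{equation*}

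For this rank-one identity, Lemma \ref{lem:commutation}(a) first allows rewriting as $F_i^{(a)} 1_\mu E_i^{(a)} = F_i^{(a)} E_i^{(a)} 1_{s_i(\mu)}$. Applying Lemma \ref{lem:commutation}(c) with $b = a$ and $\mu$ replaced by $s_i(\mu)$, and using $\bil{\alpha_i^\vee}{s_i(\mu)} = -a$, one obtains
\begin{equation*}
F_i^{(a)} E_i^{(a)} 1_{s_i(\mu)} = \sum_{t \ge 0} \sqbinom{a}{t}_i \, E_i^{(a-t)} F_i^{(a-t)} 1_{s_i(\mu)}.
\end{equation*}
Terms with $t > a$ vanish (negative divided powers), and the $t = a$ term is exactly $1_{s_i(\mu)}$. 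So the terms with $0 \le t < a$ must vanish; for this, it suffices to show $F_i^{(a-t)} 1_{s_i(\mu)} = 0$ whenever $a - t \ge 1$, which reduces to the claim that $1_{s_i(\mu) - k\alpha_i} = 0$ for all $k \ge 1$.

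The main obstacle is this last vanishing, and it is where the hypotheses on reducedness and on maximality of $\lambda_0$ both enter. Setting $w = s_i w'$, reducedness gives $\ell(w) = \ell(w') + 1$, which by a standard fact forces $\gamma := w'^{-1}(\alpha_i)$ to be a positive root; consequently $w^{-1}(\alpha_i) = -\gamma$. A direct calculation using $w(\gamma) = s_i(\alpha_i) = -\alpha_i$ then yields the identity $s_i(\mu) - k\alpha_i = w(\lambda_0 + k\gamma)$, so the required statement becomes $\lambda_0 + k\gamma \notin W\pi$ for all $k \ge 1$. If, on the contrary, $\lambda_0 + k\gamma$ were in the Weyl orbit of some $\lambda'' \in \pi$, then the dominant representative $\lambda''$ would satisfy $\lambda'' \domeq \lambda_0 + k\gamma \dom \lambda_0$, contradicting the hypothesis that $\lambda_0$ is maximal in $\pi$. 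This completes the rank-one step and, by induction, the full lemma.
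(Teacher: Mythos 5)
Your proof is correct and follows essentially the same route as the paper's: induction on $r$, reduction to the rank-one identity $1_{s_i(\mu)} = F_i^{(a)}1_\mu E_i^{(a)}$ via Lemma \ref{lem:commutation}(a) and (c), with the key point being the vanishing $F_i 1_{w(\lambda_0)} = 0$. In fact you supply more detail than the paper at the crucial step --- the paper only asserts that $w(\lambda_0)$ is ``minimal in the $\alpha_i$ direction,'' whereas you make explicit how reducedness (positivity of $\gamma = w'^{-1}(\alpha_i)$, which also guarantees $a_r \ge 0$) and maximality of $\lambda_0$ combine to give $1_{w(\lambda_0)-k\alpha_i}=0$.
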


\begin{proof}
We proceed by induction on the length $r$ of $w \in W$. If $r=1$ then
$w = s_i$ is a simple reflection. Put $\mu = s_i(\lambda_0) = \lambda_0 -
\bil{\alpha_i^\vee}{\lambda_0} \alpha_i$ and set $a = b =
\bil{\alpha_i^\vee}{\lambda_0}$ in Lemma \ref{lem:commutation}(c). Then
we get
\[
   F_i^{(a)} E_i^{(a)} 1_{s_i(\lambda_0)} = 1_{s_i(\lambda_0)} 
\]
since $F_i 1_{s_i(\lambda_0)} = 1_{s_i(\lambda_0) - \alpha_i} F_i = 0$
because the weight $s_i(\lambda_0)$ is minimal in $W\pi$ in the
$\alpha_i$ direction. Applying Lemma \ref{lem:commutation}(a) we see
that $E_i^{(a)} 1_{s_i(\lambda_0)} = E_i^{(a)} 1_{\lambda_0-a \alpha_i} =
1_{\lambda_0} E_i^{(a)}$, so
\[
    F_i^{(a)} 1_{\lambda_0} E_i^{(a)}  = 1_{s_i(\lambda_0)}.
\]
This proves the result in case $r=1$.

Now assume that $r>1$ and write $w = s_{i_r} w'$ where $w' =
s_{i_{r-1}} \cdots s_{i_1}$. By induction the result holds for $w'$,
i.e.,
\[
1_{w'(\lambda_0)} =  F_{i_{r-1}}^{(a_{r-1})} \cdots
  F_{i_1}^{(a_1)} 1_{\lambda_0} E_{i_1}^{(a_1)} \cdots
  E_{i_{r-1}}^{(a_{r-1})} .
\]
For ease of notation set $i = i_r$. Put $\mu = w(\lambda_0)$ and set $a
= b = \bil{\alpha_i^\vee}{w'(\lambda_0)}$ in Lemma
\ref{lem:commutation}(c). Similar to the above, we get
\[
  F_i^{(a)} E_i^{(a)} 1_{w(\lambda_0)} = 1_{w(\lambda_0)} 
\]
since $w(\lambda_0)$ is minimal in the $\alpha_i$ direction. Again by
Lemma \ref{lem:commutation}(a) we have $E_i^{(a)} 1_{w(\lambda_0)} =
E_i^{(a)} 1_{w'(\lambda_0)-a \alpha_i} = 1_{w'(\lambda_0)} E_i^{(a)}$, so
\[
    F_i^{(a)} 1_{w'(\lambda_0)} E_i^{(a)}  = 1_{w(\lambda_0)} .
\]
Putting in $a = a_r$, $E_i = E_{i_r}$, and $F_i = F_{i_r}$ gives the
result.
\end{proof}

\begin{cor}\label{cor:kernel}
  Let $\lambda_0$ be maximal in $\pi$ and put $\pi' = \pi -
  \{\lambda_0\}$. 

  (a) The kernel of the surjective algebra quotient map $p_{\pi,\pi'}
  : \Schur(\pi) \to \Schur(\pi')$ defined in
  \ref{ss:quotient-morphisms} is equal to the two-sided ideal
  $\Schur(\pi) 1_{\lambda_0} \Schur(\pi) = \Schur(\pi)^- 1_{\lambda_0}
  \Schur(\pi)^+$.

  (b) The natural map $\Schur 1_{\lambda_0} \otimes_{\Q(v)}
  1_{\lambda_0} \Schur \to \Schur 1_{\lambda_0} \Schur$, given by $x
  \otimes y \mapsto xy$, is surjective.
\end{cor}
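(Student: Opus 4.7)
The plan is to deduce part (a) by combining Lemma \ref{lem:idem-straightening} with a universal-property argument and then to peel off the triangular decomposition to get the second equality; part (b) is then nearly immediate.

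For the first equality of (a), set $J := \Schur(\pi) 1_{\lambda_0} \Schur(\pi)$. The inclusion $J \subseteq \ker p_{\pi,\pi'}$ is clear because $p_{\pi,\pi'}(1_{\lambda_0}) = 0$. For the reverse inclusion I would show that $\Schur(\pi)/J$ satisfies the defining relations of $\Schur(\pi')$: relations \ref{ss:defrels}(c) transfer directly, while those in (a) and (b) involve sums over $W\pi = W\pi' \sqcup W\lambda_0$ and reduce to the $\Schur(\pi')$-relations provided $1_\mu \in J$ for every $\mu \in W\lambda_0$. This is exactly what Lemma \ref{lem:idem-straightening} delivers, since it expresses each $1_{w\lambda_0}$ as a product having $1_{\lambda_0}$ in the middle. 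The universal property of $\Schur(\pi')$ then produces a surjection $\Schur(\pi') \to \Schur(\pi)/J$ which is a two-sided inverse on generators to the map $\Schur(\pi)/J \to \Schur(\pi')$ induced by $p_{\pi,\pi'}$, forcing $J = \ker p_{\pi,\pi'}$.

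For the second equality of (a), the key input is that maximality of $\lambda_0$ in $\pi$ forces $\lambda_0 + \alpha_i \notin W\pi$ for every $i \in I$: were $\lambda_0 + \alpha_i = w\mu$ with $\mu \in \pi$, then (in finite type) the dominant representative $\mu$ of its Weyl orbit satisfies $\mu \domeq \lambda_0 + \alpha_i \dom \lambda_0$, contradicting maximality. Hence $E_i 1_{\lambda_0} = 1_{\lambda_0 + \alpha_i} E_i = 0$ and symmetrically $1_{\lambda_0} F_i = 0$ for every $i$. Because $\Schur^\pm$ is generated by the $E_i$, $F_i$ respectively, this gives $\Schur^+ 1_{\lambda_0} = \Q(v)\, 1_{\lambda_0} = 1_{\lambda_0}\Schur^-$; together with $\Schur^0 1_{\lambda_0} = \Q(v)\, 1_{\lambda_0}$ and the triangular decomposition $\Schur = \Schur^- \Schur^0 \Schur^+$ (Lemma \ref{lem:td}) this yields $\Schur 1_{\lambda_0} = \Schur^- 1_{\lambda_0}$ and, by the mirror argument, $1_{\lambda_0} \Schur = 1_{\lambda_0} \Schur^+$. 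Finally, $1_{\lambda_0}^2 = 1_{\lambda_0}$ gives $\Schur 1_{\lambda_0} \Schur = (\Schur 1_{\lambda_0})(1_{\lambda_0}\Schur) = \Schur^- 1_{\lambda_0} \Schur^+$.

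Part (b) is then immediate: any spanning element $x 1_{\lambda_0} y$ of $\Schur 1_{\lambda_0} \Schur$ equals $(x 1_{\lambda_0})(1_{\lambda_0} y)$, which is the image of $(x 1_{\lambda_0}) \otimes (1_{\lambda_0} y)$ under the multiplication map. The main obstacle I anticipate is the universal-property step in the first paragraph: one must verify carefully that killing $1_{\lambda_0}$ collapses the orthogonal-idempotent relation $\sum_{\lambda \in W\pi} 1_\lambda = 1$ to the corresponding relation for $W\pi'$, and this is precisely where the full strength of Lemma \ref{lem:idem-straightening} (applied to every $w \in W$) is needed.
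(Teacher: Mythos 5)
Your proof is correct and follows essentially the same route as the paper: the first equality comes from comparing the defining presentations (your universal-property argument is just this made explicit) together with Lemma \ref{lem:idem-straightening}, and the second from the triangular decomposition of Lemma \ref{lem:td}. You usefully spell out the step the paper leaves implicit — that maximality of $\lambda_0$ forces $E_i 1_{\lambda_0} = 0 = 1_{\lambda_0} F_i$, whence $\Schur 1_{\lambda_0} = \Schur^- 1_{\lambda_0}$ and $1_{\lambda_0}\Schur = 1_{\lambda_0}\Schur^+$ — but the underlying argument is the same.
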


\begin{proof}
(a) We know by comparing the defining presentations of $\Schur(\pi)$
  and $\Schur(\pi')$ that the kernel of $p_{\pi,\pi'}$ is generated by
  the set of idempotents of the form $1_{w(\lambda_0)}$, for $w \in
  W$. The preceding lemma implies that all of these generators are
  contained in $\Schur(\pi) 1_{\lambda_0} \Schur(\pi)$. The final
  equality
\[
 \Schur(\pi) 1_{\lambda_0} \Schur(\pi) = \Schur(\pi)^- 1_{\lambda_0}
 \Schur(\pi)^+
\]
follows immediately from the triangular decomposition, Lemma
\ref{lem:td}.

(b) This is clear. 
\end{proof}

\section{Highest weight representations}\label{sec:reps}\noindent
We now embark on the study of the finite dimensional representations
of $\Schur = \Schur(\pi)$. As above, the finite saturated set $\pi$ is
fixed unless stated otherwise.

\subsection{}\label{ss:maxvecs}
Let $M$ be a left $\Schur$-module. We only consider finite dimensional
modules. Given $\lambda \in W\pi$, a vector $v \in M$ is a
\emph{weight vector} of weight $\lambda$ if $1_\lambda v =
v$. Evidently the subspace $1_\lambda M$ is the set of all weight
vectors of weight $\lambda$. We have the direct sum decomposition
\begin{equation}
  M  = \textstyle \bigoplus_{\lambda \in W\pi} 1_\lambda M.
\end{equation}
We call this the weight space decomposition of $M$ and call the
subspace $1_\lambda M$ a weight space of weight $\lambda$.  Note that
if we introduce, for any $h \in \X^\vee$, the element
\begin{equation}
  K_h = \textstyle \sum_{\lambda \in W\pi} v^{\bil{h}{\lambda}} 1_\lambda.
\end{equation}
then $K_h$ acts semisimply on any $\Schur$-module $M$, and acts as the
scalar $v^{\bil{h}{\lambda}}$ on the weight space $1_\lambda M$. This
justifies the terminology.

From relations \ref{ss:defrels}(c), (a) it follows that for any
$i \in I$ we have inclusions
\begin{equation}
  E_i (1_\lambda M) \subseteq 1_{\lambda+\alpha_i} M; \qquad F_i (1_\lambda M)
  \subseteq 1_{\lambda-\alpha_i} M
\end{equation}
for $\lambda\in W\pi$. So acting by $E_i$ on a weight vector increases the
weight by $\alpha_i$ and acting by $F_i$ decreases the weight by
$\alpha_i$.

\begin{defn}
Let $M$ be a given $\Schur$-module.  If $\lambda$ in $W\pi$ has the
property that $1_\lambda M \ne 0$ but $1_\mu M = 0$ for all $\mu \dom
\lambda$, then we say that $\lambda$ is a \emph{highest weight} of $M$
and we call any nonzero element of $1_\lambda M$ a \emph{highest
  weight vector}.  If $0 \ne x_0 \in M$ is a weight vector such that
$E_i \cdot x_0 = 0$ for every $i \in I$, then $x_0$ is called a
\emph{maximal vector} of $M$.
\end{defn}

A highest weight vector is also a maximal vector.  If $M \ne 0$ is a
finite dimensional $\Schur$-module then $M$ has at least one highest
weight vector, because there are only finitely many weights in $M$.
It follows that $M$ has at least one maximal vector.  We wish to study
the submodules of $M$ which are generated by a chosen maximal vector.

Let $I^*$ be the collection of all finite sequences $(i_1, i_2, \dots,
i_r)$ over the `alphabet' $I$, including the empty sequence
$\epsilon$.

\begin{lem}\label{lem:maxvec}
  Let $\Schur = \Schur(\pi)$.  Let $x_0$ be a maximal vector of weight
  $\lambda$ in a finite dimensional left $\Schur$-module $M$, and put
  $V = \Schur x_0$. Then:

  {\rm(a)} $V$ is the $\Q(v)$-span of elements of the form
  $F_{i_1}F_{i_2}\cdots F_{i_r} x_0$ for various finite sequences
  $(i_1, \dots, i_r)$ (including the empty sequence) in $I^*$.

  {\rm(b)} If $1_\mu V \ne 0$ then $\mu \ldomeq \lambda$, so $x_0$ is
  a highest weight vector of $V$.

  {\rm(c)} $\dim_{\Q(v)} 1_\lambda V = 1$. 

  {\rm(d)} $V$ is indecomposable, with a unique maximal submodule
    and a corresponding unique simple quotient.
\end{lem}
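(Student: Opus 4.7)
The plan is to deduce everything from the triangular decomposition (Lemma \ref{lem:td}), the weight grading on $\Schur^-$ given by Lemma \ref{lem:plus-minus}, and two elementary properties of $x_0$: the maximality condition $E_i x_0 = 0$ for all $i \in I$, and the weight condition $1_\lambda x_0 = x_0$ (while $1_\mu x_0 = 0$ for $\mu \ne \lambda$).

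For (a), I would use $\Schur = \Schur^-\Schur^0\Schur^+$ and observe that $\Schur^+ x_0 = \Q(v)\, x_0$ and $\Schur^0 x_0 = \Q(v)\, x_0$. Indeed, by Lemma \ref{lem:plus-minus}, $\Schur^+$ is spanned by the scalars and the products of positive length in the $E_i$; such positive-length products annihilate $x_0$ by maximality. Similarly $\Schur^0 = \bigoplus_\mu \Q(v) 1_\mu$ acts on $x_0$ through the single idempotent $1_\lambda$. Hence $V = \Schur^- x_0$, which by Lemma \ref{lem:plus-minus} is spanned by the images of the monomials $F_{i_1}\cdots F_{i_r}$, giving (a).

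For (b) and (c): by relation \ref{ss:defrels}(c) iterated, each spanning vector $F_{i_1}\cdots F_{i_r} x_0$ lies in the weight space $1_{\lambda - \sum \alpha_{i_k}} M$, hence has weight $\ldomeq \lambda$, proving (b). The only monomial contributing to weight exactly $\lambda$ is the empty one ($r=0$), which contributes $x_0$ itself; thus $1_\lambda V = \Q(v)\, x_0$, which is one-dimensional since $x_0 \ne 0$, proving (c).

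For (d), the key observation is that any proper submodule $N \subsetneq V$ must satisfy $1_\lambda N = 0$: otherwise $1_\lambda N$ would be a nonzero subspace of the one-dimensional $1_\lambda V = \Q(v)\, x_0$, forcing $x_0 \in N$, and hence $N \supseteq \Schur x_0 = V$, a contradiction. Consequently the sum $N_0$ of all proper submodules of $V$ still satisfies $1_\lambda N_0 = 0$, so $N_0$ is itself proper and is the unique maximal submodule; the quotient $V/N_0$ is the unique simple quotient. Indecomposability follows from the same observation applied to a hypothetical decomposition $V = V_1 \oplus V_2$: writing $x_0 = u_1 + u_2$ with $u_j \in V_j$ and applying $1_\lambda$ forces all but one $u_j$ to vanish, so $x_0$ sits in a single summand, which must then equal $V$. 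No step poses a real obstacle; the whole argument is governed by the one-dimensionality of the top weight space, which is in turn the direct output of the triangular decomposition applied to a maximal vector.
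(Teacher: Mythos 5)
Your proposal is correct and follows essentially the same route as the paper: the triangular decomposition $\Schur=\Schur^-\Schur^0\Schur^+$ gives $V=\Schur^-x_0$ for (a), the weight grading gives (b) and (c), and (d) comes from the sum of all proper submodules. Your treatment of (d) is in fact slightly more careful than the paper's one-line version, since the observation that $1_\lambda N=0$ for every proper submodule $N$ is exactly what is needed to see that the sum of all proper submodules is still proper.
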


\begin{proof}
  Part (a) follows from the triangular decomposition $\Schur =
  \Schur^- \Schur^0 \Schur^+$, which implies that $V = \Schur^- x_0$
  since obviously $\Schur^+ x_0 = \Q(v)\cdot x_0$ and $\Schur^0 x_0 =
  \Q(v)\cdot x_0$. Part (b) follows from part (a), part (c) is
  obvious, and to get part (d), let $V'$ be the sum of all proper
  submodules and note that $V' \ne V$ since no proper submodule
  contains $x_0$.
\end{proof}

\begin{cor}
  Let $\Schur = \Schur(\pi)$. Any simple left $\Schur$-module has a
  unique highest weight vector $x_0$, up to scalar multiple.
\end{cor}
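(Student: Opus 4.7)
The plan is to combine simplicity with Lemma~\ref{lem:maxvec}. Let $M$ be a simple left $\Schur$-module. Since every finite dimensional $\Schur$-module has a highest weight vector (as noted in the paragraph preceding Lemma~\ref{lem:maxvec}), we can pick some highest weight vector $x_0 \in M$ of some weight $\lambda$. By simplicity, the submodule $\Schur x_0$ is either $0$ or $M$; it is nonzero, so $\Schur x_0 = M$. Now apply parts (b) and (c) of Lemma~\ref{lem:maxvec} with $V = M$: we conclude that every weight $\mu$ of $M$ satisfies $\mu \ldomeq \lambda$, and that $\dim_{\Q(v)} 1_\lambda M = 1$.

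To finish, I would show that $\lambda$ is the \emph{only} highest weight of $M$. Suppose $x_0'$ is another highest weight vector, of weight $\lambda'$. Repeating the argument of the previous paragraph with $x_0'$ in place of $x_0$ (using that $M = \Schur x_0'$, again by simplicity), every weight $\mu$ of $M$ satisfies $\mu \ldomeq \lambda'$. Applying this to $\mu = \lambda$ and the previous conclusion to $\mu = \lambda'$ gives $\lambda \ldomeq \lambda'$ and $\lambda' \ldomeq \lambda$, hence $\lambda = \lambda'$. Then $x_0' \in 1_\lambda M$, which is one-dimensional, so $x_0'$ is a scalar multiple of $x_0$.

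The whole argument is essentially a direct corollary of Lemma~\ref{lem:maxvec}; the only potentially subtle point is the uniqueness of the highest \emph{weight} (as opposed to the vector), but this is handled by the symmetry of the above bootstrapping argument combined with the antisymmetry of the dominance order $\ldomeq$. No separate calculation is required beyond invoking parts (b) and (c) of Lemma~\ref{lem:maxvec} twice.
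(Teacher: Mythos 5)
Your argument is correct and follows essentially the same route as the paper: both proofs use simplicity to see that any highest weight vector generates $M$, then invoke parts (b) and (c) of Lemma~\ref{lem:maxvec} symmetrically to force the two highest weights to coincide and the vectors to be proportional. The only detail the paper spells out that you elide is why a simple module is automatically finite dimensional (it is a composition factor of the regular representation of the finite dimensional algebra $\Schur$), which is needed before one can assert the existence of a highest weight vector.
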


\begin{proof}
  Suppose that $M$ is a simple left $\Schur$-module. By the general
  theory of finite dimensional algebras over a field, $M$ is finite
  dimensional, since it appears as the top composition factor of some
  direct summand of the regular representation. Thus the simple
  $\Schur$-module $M$ must have a highest weight vector, which must
  also be a maximal vector.  Any maximal vector necessarily generates
  $M$ since $M$ is simple. Suppose that there are two highest weight
  vectors, respectively of weight $\lambda$ and $\lambda'$.  Then by
  part (b) of Lemma \ref{lem:maxvec} we must have $\lambda' \ldomeq
  \lambda$ and $\lambda \ldomeq \lambda'$, so $\lambda =
  \lambda'$. Then part (c) of Lemma \ref{lem:maxvec} forces the two
  highest weight vectors to be proportional, and the proof is
  complete.
\end{proof}

As another corollary of Lemma \ref{lem:maxvec} we show that highest
weight simple modules are uniquely determined by their highest weight.
This is important for the classification of the simple
$\Schur$-modules, which will be completed later.

\begin{cor}
  Let $\Schur = \Schur(\pi)$.  Let $L, L'$ be two
  simple left $\Schur$-modules, each of highest weight $\lambda$, for
  some $\lambda \in \pi$. Then $L$ is isomorphic to $L'$.
\end{cor}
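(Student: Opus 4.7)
The plan is to exhibit a common simple quotient of a single cyclic module that maps surjectively onto both $L$ and $L'$, thereby forcing them to be isomorphic via the uniqueness clause in Lemma \ref{lem:maxvec}(d).

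First I would pick highest weight vectors $x_0 \in L$ and $x_0' \in L'$, each of weight $\lambda$. These exist by the preceding corollary, and each generates its respective module since $L$ and $L'$ are simple. Next I would form the external direct sum $M = L \oplus L'$, which is again a finite dimensional $\Schur$-module, and consider the vector $y_0 = (x_0, x_0') \in M$. Because $1_\lambda$ and each $E_i$ act diagonally on $M$, the element $y_0$ is a weight vector of weight $\lambda$ satisfying $E_i \cdot y_0 = (E_i x_0, E_i x_0') = 0$ for all $i \in I$, so $y_0$ is a maximal vector of $M$ in the sense of the definition in Section \ref{sec:reps}.

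Now let $V = \Schur \cdot y_0 \subseteq M$. By Lemma \ref{lem:maxvec}(d), $V$ is indecomposable and possesses a unique maximal submodule $V'$, hence a unique simple quotient $V/V'$. Let $p_1 \colon M \to L$ and $p_2 \colon M \to L'$ denote the two projections of $M$ onto its direct summands, and set $\varphi_1 = p_1|_V$ and $\varphi_2 = p_2|_V$. Each $\varphi_k$ is a $\Schur$-module homomorphism; moreover $\varphi_1(y_0) = x_0$ and $\varphi_2(y_0) = x_0'$, so the images $\varphi_1(V) = \Schur x_0 = L$ and $\varphi_2(V) = \Schur x_0' = L'$ are the full simple modules. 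Consequently $\ker \varphi_1$ and $\ker \varphi_2$ are proper submodules of $V$ whose respective quotients $V/\ker\varphi_1 \cong L$ and $V/\ker\varphi_2 \cong L'$ are simple, so both kernels are maximal submodules of $V$. By the uniqueness of the maximal submodule we conclude $\ker \varphi_1 = V' = \ker \varphi_2$, and therefore
\[
L \;\cong\; V/\ker\varphi_1 \;=\; V/V' \;=\; V/\ker\varphi_2 \;\cong\; L'.
\]

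There is really no serious obstacle here; the argument is a standard ``diagonal maximal vector'' trick and relies only on parts (a)--(d) of Lemma \ref{lem:maxvec} already established in the excerpt. The only point to be careful about is confirming that $y_0$ really is a maximal vector, which is immediate from the componentwise action of the $E_i$ and of $1_\lambda$ on $L \oplus L'$.
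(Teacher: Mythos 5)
Your argument is correct and is essentially identical to the paper's own proof: both form the diagonal maximal vector $(x_0,x_0')$ in $L\oplus L'$, take the cyclic submodule it generates, and use the uniqueness of the simple quotient from Lemma \ref{lem:maxvec}(d) together with the two projection epimorphisms to conclude $L\simeq L'$. Your write-up merely makes explicit the identification of the two kernels with the unique maximal submodule, which the paper leaves implicit.
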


\begin{proof}
(Compare with the proof of Theorem A of \S20.3 in \cite{Humphreys}.)
  Let $M = L \oplus L'$. Suppose that $y_0$, $y'_0$ are maximal
  vectors in $L$, $L'$ respectively. The left weight of both $y_0$ and
  $y'_0$ is $\lambda$. Put $x_0 = (y_0,y'_0)$. Then $x_0$ is a maximal
  vector in $M$, of left weight $\lambda$. Let $N$ be the submodule of
  $M$ generated by $x_0$. Lemma \ref{lem:maxvec} implies that $N$ has
  a unique simple quotient. But the natural projections $N \to L$, $N
  \to L'$ are $\Schur$-module epimorphisms, so $L \simeq L'$, as
  desired.
\end{proof}

The algebra $\Schur = \Schur(\pi)$ has a family of highest
weight modules, one a left module and one a right module, for each
$\lambda \in \pi$, which will be denoted by $\Delta(\lambda)$ and
$\Delta(\lambda)^*$, respectively.

\begin{prop}
  Put $\Schur = \Schur(\pi)$. For each $\lambda \in \pi$ there is a
  unique left $\Schur$-module $\Delta(\lambda)$ of highest weight
  $\lambda$ and a unique right $\Schur$-module $\Delta(\lambda)^*$ of
  highest weight $\lambda$ such that $\Delta(\lambda)$ is the left
  ideal $\Schur(\pi_\lambda) 1_\lambda$ and $\Delta(\lambda)^*$ the
  right ideal $\Schur(\pi_\lambda) 1_\lambda$ in the algebra
  $\Schur(\pi_\lambda)$, where $\pi_\lambda = \{\mu \in X^+ : \mu
  \ldomeq \lambda \}$.
\end{prop}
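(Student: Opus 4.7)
The plan is to verify directly that $\Delta(\lambda) := \Schur(\pi_\lambda)\,1_\lambda$ and its right-handed analogue $\Delta(\lambda)^* := 1_\lambda\,\Schur(\pi_\lambda)$ have the asserted properties; the ``uniqueness'' in the statement is inherent in the explicit construction of these subspaces of $\Schur(\pi_\lambda)$. Since $\pi$ is saturated and $\lambda \in \pi$, every dominant $\mu$ with $\mu \ldomeq \lambda$ lies in $\pi$, so $\pi_\lambda \subseteq \pi$; and $\pi_\lambda$ is itself saturated by \ref{saturated}. Hence $\Schur(\pi_\lambda)$ is defined, and the surjection $p_{\pi,\pi_\lambda}\colon \Schur(\pi) \to \Schur(\pi_\lambda)$ of \ref{ss:quotient-morphisms} turns any $\Schur(\pi_\lambda)$-module into a $\Schur = \Schur(\pi)$-module on the corresponding side.

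The central observation is that $\lambda$ is maximal in $W\pi_\lambda$: every element of $W\pi_\lambda$ has the form $w\mu$ with $\mu \in \pi_\lambda \subseteq \X^+$, and the standard fact $\mu - w\mu \in \N\Pi$ for $\mu$ dominant gives $w\mu \ldomeq \mu \ldomeq \lambda$. In particular $\lambda + \alpha_i \notin W\pi_\lambda$ for each $i \in I$ (otherwise $-\alpha_i$ would lie in $\N\Pi$, which is absurd), so by the convention following \ref{ss:defrels} the symbol $1_{\lambda+\alpha_i}$ equals $0$ inside $\Schur(\pi_\lambda)$. Relation \ref{ss:defrels}(c) then gives
\[
E_i\cdot 1_\lambda = 1_{\lambda+\alpha_i}\,E_i = 0 \quad\text{and}\quad 1_\lambda\cdot F_i = F_i\,1_{\lambda+\alpha_i} = 0
\]
for every $i \in I$, so $1_\lambda$ is a maximal vector of left weight $\lambda$ in $\Delta(\lambda)$ and, symmetrically, of right weight $\lambda$ in $\Delta(\lambda)^*$.

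Lemma \ref{lem:maxvec} applied to $1_\lambda \in \Delta(\lambda)$ now yields $\Delta(\lambda) = \Schur(\pi_\lambda)^-\cdot 1_\lambda$, shows that every weight of $\Delta(\lambda)$ satisfies $\ldomeq \lambda$ (so $\lambda$ is genuinely the highest weight), and provides $\dim_{\Q(v)} 1_\lambda\Delta(\lambda) = 1$. The corresponding claims for $\Delta(\lambda)^*$ follow either by the symmetric argument or, more economically, by applying the anti-involution $*$ of \ref{ss:star-omega}, which fixes $1_\lambda$ and sends the left ideal $\Schur(\pi_\lambda)\,1_\lambda$ onto the right ideal $1_\lambda\,\Schur(\pi_\lambda)$. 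No serious obstacle arises; the one point requiring care is that the vanishing $1_{\lambda+\alpha_i} = 0$ must be read inside $\Schur(\pi_\lambda)$ rather than inside $\Schur(\pi)$ itself --- precisely why maximality of $\lambda$ in $\pi_\lambda$, as opposed to in $\pi$, is the essential input.
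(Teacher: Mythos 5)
Your argument is correct and is essentially the paper's proof, which simply records that $\lambda$ is maximal in $\pi_\lambda$ and that the ideals become $\Schur(\pi)$-modules via $p_{\pi,\pi_\lambda}$; you have merely filled in the details the paper calls ``evident'' (maximality of $\lambda$ in $W\pi_\lambda$ forces $1_{\lambda+\alpha_i}=0$ in $\Schur(\pi_\lambda)$, so $1_\lambda$ is a maximal vector and Lemma \ref{lem:maxvec} identifies $\lambda$ as the highest weight). The use of the anti-involution $*$ to transfer the conclusion to the right ideal is a clean way to handle the second half.
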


\begin{proof}
Evidently $\lambda$ is a maximal element of $\pi_\lambda$. The left
ideal $\Schur(\pi_\lambda)1_\lambda$ becomes a left $\Schur$-module
via the surjective algebra homomorphism
\[
  p_{\pi,\pi_\lambda} : \Schur = \Schur(\pi) \to \Schur(\pi_\lambda)
\] 
induced by the inclusion of the saturated set $\pi_\lambda \subseteq
\pi$ (see \ref{ss:quotient-morphisms}). Similarly for the right ideal
$1_\lambda \Schur(\pi_\lambda)$.
\end{proof}

Later we will see that the modules $\Delta(\lambda)$ are simple as
$\Schur$-modules, for all $\lambda \in \pi$, and that $\Schur$ is a
semisimple algebra.

\section{Semisimplicity in the rank 1 case}\label{sec:rank1}\noindent
We turn now to the case where the root datum (see \ref{ss:root-datum})
has rank 1. So we assume that $I = \{i\}$ is a singleton and $\X =
\X^\vee = \Z$, with $\alpha_i = 2$ and $\alpha_i^\vee = 1$. Dominant
weights are just nonnegative integers, and a set $\pi$ of nonnegative
integers is saturated if and only if $n \in \pi$ and $n-2 \ge 0$
implies that $n-2 \in \pi$. Any subset of $2\Z$ of the form $2\Z \cap
[0,x]$ is saturated, as is any subset of $2\Z+1$ of the form
$(2\Z+1)\cap [0,x]$, where $x$ is a positive real number and $[0,x]$
is the closed interval in the real line $\R$ with the given
endpoints. In general, a subset of $\X^+ = \Z_{\ge 0}$ is saturated if
and only if it is a union of such subsets.

We wish to classify the simple representations of $\Schur =
\Schur(\pi)$ in the rank 1 case. This prepares the way for the
classification of simple representations in higher ranks, and is also
needed in the derivation of the quantized Serre relations in Section
\ref{sec:Serre}.

\begin{lem}\label{lem:r1simples}
For $n \in \pi$ let $x_0$ be a highest weight vector of $\Delta(n)$
and $\Delta(n)^*$. Put $x_t = F_i^{(t)} \cdot x_0 \in \Delta(n)$ and
$x'_t = x_0 \cdot E_i^{(t)} \in \Delta(n)^*$, for $t \ge 0$. For $t<0$
put $x_t = x'_t = 0$. Then $x_{t}=0=x^\prime_{t}$ for all $t > n$, and
furthermore
\begin{itemize}
\item[(a)]\quad  $F_i \cdot x_t = [t+1]_i\, x_{t+1}$, \quad $E_i \cdot x_t =
  [n-(t-1)]_i \, x_{t-1}$

\item[(b)]\quad $x^\prime_t \cdot E_i = [t+1]_i\, x^\prime_{t+1}$, \quad
  $x^\prime_t \cdot F_i = [n-(t-1)]_i \, x^\prime_{t-1}$ 
\end{itemize}

\noindent
for $t = 0, 1, \dots, n$.  Thus $\{x_0, \dots, x_n\}$ is a
basis for $\Delta(n)$ and $\{x^\prime_0, \dots,
x^\prime_n\}$ is a basis for $\Delta(n)^*$, and both
$\Delta(n)$ and $\Delta(n)^*$ are simple $\Schur$-modules.
\end{lem}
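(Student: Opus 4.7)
My plan is to establish the action formulas (a) and (b), then handle spanning and linear independence, and finally deduce simplicity. The vanishing $x_t = x'_t = 0$ for $t > n$ is immediate from weight considerations: by Lemma~\ref{lem:commutation}(a), $F_i^{(t)} 1_n = 1_{n-2t} F_i^{(t)}$, and for $t > n$ the weight $n - 2t$ lies outside $W\pi_n = \{-n, -n+2, \dots, n\}$, so $1_{n-2t} = 0$.

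For (a), the identity $F_i x_t = [t+1]_i x_{t+1}$ is immediate from $F_i \cdot F_i^{(t)} = [t+1]_i F_i^{(t+1)}$, built into the definition of divided powers. For $E_i x_t = [n-(t-1)]_i x_{t-1}$, I apply Lemma~\ref{lem:commutation}(b) with $a = 1$, $b = t$, $\mu = n$: terms with $s \ge 2$ vanish since $E_i^{(1-s)} = 0$, the $s = 0$ term vanishes upon application to $x_0$ because $E_i x_0 = 0$, and the $s = 1$ term contributes exactly $[n-t+1]_i\, x_{t-1}$. Then (b) follows by applying the anti-involution $*$ of~\ref{ss:star-omega}, which swaps $E_i \leftrightarrow F_i$ and fixes $1_n$; it sends $x_t = F_i^{(t)} 1_n$ to $1_n E_i^{(t)} = x'_t$ and converts left into right multiplication.

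The spanning assertion follows from Lemma~\ref{lem:maxvec}(a): $\Delta(n) = \Schur \cdot x_0$ is spanned by $\{F_i^r x_0\}_{r \ge 0} = \{[r]_i^!\, x_r\}_{r \ge 0}$, which lies in the $\Q(v)$-span of $x_0, \dots, x_n$ given the vanishing for $t > n$. For linear independence I apply Lemma~\ref{lem:commutation}(b) with $a = b = t$, $\mu = n$: only the $s = t$ summand survives (again because $E_i x_0 = 0$), yielding the key identity $E_i^{(t)} x_t = \sqbinom{n}{t}_i x_0$. Since $\sqbinom{n}{t}_i$ is a nonzero element of $\Q(v)$ for $0 \le t \le n$, each $x_t$ is nonzero provided $x_0 \neq 0$. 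To see this last point, I construct an explicit $(n+1)$-dimensional $\Q(v)$-vector space $V$ with basis $y_0, \dots, y_n$, declaring the generators to act by $1_\mu y_t = \delta_{\mu, n - 2t} y_t$ together with the $y$-analogues of the formulas in (a). Verifying the defining relations of~\ref{ss:defrels} reduces to the quantum-integer identity
\[
  [t+1]_i\, [n-t]_i - [t]_i\, [n-t+1]_i = [n - 2t]_i,
\]
obtained directly from $[k] = (v^k - v^{-k})/(v - v^{-1})$. Since $y_0 \in V$ satisfies $1_n y_0 = y_0$, the natural $\Schur$-module map $\Delta(n) \to V$ sending $x \cdot 1_n \mapsto x y_0$ is well-defined and sends $x_t \mapsto y_t$, forcing $x_0 \neq 0$ and making $\{x_0, \dots, x_n\}$ a basis; the statement for $\Delta(n)^*$ follows identically (or by applying $*$).

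For simplicity of $\Delta(n)$, let $N$ be any nonzero $\Schur$-submodule. Pick $t_0$ with $1_{n - 2t_0} N \neq 0$; since $1_{n-2t_0} \Delta(n)$ is one-dimensional, $x_{t_0} \in N$. Iterating $E_i x_t = [n - t + 1]_i x_{t-1}$ gives
\[
  E_i^{t_0} x_{t_0} = \textstyle\prod_{j=1}^{t_0} [n - t_0 + j]_i \cdot x_0,
\]
a nonzero scalar multiple of $x_0$, so $x_0 \in N$ and consequently every $x_t = F_i^{(t)} x_0 \in N$, forcing $N = \Delta(n)$; the argument for $\Delta(n)^*$ is symmetric. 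The principal obstacle is the explicit construction of $V$ needed to rule out $x_0 = 0$; once the quantum-integer identity above is verified, every other step is routine bookkeeping with the commutation formulas of Lemma~\ref{lem:commutation}.
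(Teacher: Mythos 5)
Your proof is correct and, for the most part, follows the same route as the paper: the two formulas in (a) come from the definition of divided powers and from Lemma \ref{lem:commutation}(b) with $a=1$, $b=t$ (the paper uses the same commutation identity), part (b) is obtained by applying $*$, spanning comes from Lemma \ref{lem:maxvec}(a), and simplicity follows because any nonzero weight vector regenerates $x_0$ and hence all of $\Delta(n)$. The one place you genuinely diverge is in the linear independence step. The paper simply asserts that $x_0,\dots,x_n$ have distinct weights and are therefore linearly independent, which tacitly assumes each $x_t$ is nonzero --- equivalently that $1_n\ne 0$ in the presented algebra $\Schur(\pi_n)$, something that is not a priori obvious for an algebra given by generators and relations. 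You close this gap by computing $E_i^{(t)}x_t=\sqbinom{n}{t}_i x_0$ and then exhibiting an explicit $(n+1)$-dimensional module $V$ (verified via the identity $[t+1]_i[n-t]_i-[t]_i[n-t+1]_i=[n-2t]_i$, which does check out) admitting a map $\Delta(n)\to V$, $x_t\mapsto y_t$. This buys you an honest proof that the $x_t$ are nonzero and independent, and as a by-product shows the presented algebra does not collapse; the cost is the extra verification of the defining relations on $V$, which you have correctly reduced to the quantum-integer identity. Both arguments are valid, but yours is the more complete one on this point.
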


\begin{proof} 
We know that $x_0$ is a nonzero scalar multiple of $1_n$, so we may as
well assume that $x_0 = 1_n$. We have $F_i \cdot F_i^{(t)} 1_n =
[t+1]_i F_i^{(t+1)} 1_n$ for all $t \ge 0$ by definition of the
quantized divided powers.  This gives the first formula in (a). If $t
> n$ then $F_i^{(t)} 1_n = 1_{n-2t} F_i^{(t)} = 0$ in $\Delta(n)$ by
Lemma \ref{lem:commutation}(a).  From Lemma \ref{lem:commutation}) we
have
\begin{equation*}
  E_i \cdot F_i^{(t)} 1_n = F_i^{(t)} E_i 1_n + [n-(t-1)]_i
  F_i^{(t-1)} 1_n 
\end{equation*}
in $\Delta(n)$, for all $t \ge 0$.  This implies the second formula in
(a) since $E_i 1_n = 0$ in $\Delta(n)$. The elements $x_0, \dots, x_m$
have distinct left weights, so they are linearly independent. They
clearly span $\Delta(n)$, and hence form a basis of $\Delta(n)$. The
formulas in (a) then imply that $\Delta(n)$ is generated, as an
$\Schur$-module, by any one of its basis elements $x_t$, for any $0
\le t \le m$. Thus $\Delta(n)$ is a simple module.

The formulas in (b) can be proved by applying the anti-involution $*$
to the formulas in (a), and all the rest of the claims about
$\Delta(n)^*$ follow.
\end{proof}

We can now prove our main result of this section, which classifies the
simple modules and shows that the algebra $\Schur$ is semisimple.

\begin{prop}\label{prop:r1ss}
  In the rank one case, the algebra $\Schur = \Schur(\pi)$ is
  semisimple, and a complete set of isomorphism classes of simple left
  $\Schur$-modules is given by $\{ \Delta(n) \colon n \in \pi \}$.
\end{prop}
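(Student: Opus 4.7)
The plan is to first classify the simple $\Schur(\pi)$-modules, then to prove semisimplicity by induction on $|\pi|$, using a quantum Casimir element to split the extension provided by Corollary \ref{cor:kernel}.

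For the classification, Lemma \ref{lem:r1simples} shows that each $\Delta(n)$ with $n \in \pi$ is simple, and the uniqueness corollary following Lemma \ref{lem:maxvec} shows that a simple module is determined up to isomorphism by its highest weight. To ensure every simple has highest weight in $\pi$ (and not merely in $W\pi$), let $x_0$ be a maximal vector of weight $\lambda \in W\pi$ in a finite-dimensional simple module and choose $b \ge 0$ minimal with $F_i^{(b+1)} x_0 = 0$. Combining Lemma \ref{lem:commutation}(b) (with $a = 1$) with $E_i x_0 = 0$ gives
\[
 0 = E_i F_i^{(b+1)} x_0 = [\lambda - b]_i\, F_i^{(b)} x_0.
\]
Since $F_i^{(b)} x_0 \ne 0$ and $[k]_i \in \Q(v)$ vanishes only for $k = 0$, we conclude $\lambda = b \ge 0$, hence $\lambda \in \pi$.

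For semisimplicity I would induct on $|\pi|$, the base case $\pi \subseteq \{0\}$ being trivial since then $E_i$ and $F_i$ act as zero. For the inductive step, let $\lambda_0 \in \pi$ be maximal and set $\pi' = \pi \setminus \{\lambda_0\}$; Corollary \ref{cor:kernel} supplies the short exact sequence
\[
 0 \to J \to \Schur(\pi) \xrightarrow{p_{\pi, \pi'}} \Schur(\pi') \to 0, \qquad J = \Schur(\pi) 1_{\lambda_0} \Schur(\pi),
\]
with $\Schur(\pi')$ semisimple by induction. A direct weight-space computation, using that $\pi$ is saturated and $\lambda_0$ is maximal, shows that $\Schur(\pi) 1_{\lambda_0}$ is cyclic on the maximal vector $1_{\lambda_0}$ with nonzero basis $\{F_i^{(b)} 1_{\lambda_0} : 0 \le b \le \lambda_0\}$, and hence coincides with $\Delta(\lambda_0)$ as a left $\Schur(\pi)$-module; symmetrically $1_{\lambda_0} \Schur(\pi) \cong \Delta(\lambda_0)^*$ as a right module. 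By Corollary \ref{cor:kernel}(b) the multiplication map presents $J$ as a quotient of $\Delta(\lambda_0) \otimes_{\Q(v)} \Delta(\lambda_0)^* \cong \End_{\Q(v)}(\Delta(\lambda_0))$, and faithfulness of the $J$-action on the simple module $\Delta(\lambda_0)$ forces $J \cong \End_{\Q(v)}(\Delta(\lambda_0))$.

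The main obstacle is to promote this isomorphism to an algebra splitting of the extension. For this I would introduce the quantum Casimir element
\[
 C = F_i E_i + \frac{1}{(v_i - v_i^{-1})^2} \sum_{\mu \in W\pi} \bigl(v_i^{\mu+1} + v_i^{-\mu-1}\bigr) 1_\mu \in \Schur(\pi),
\]
verify that $C$ is central using relations \ref{ss:defrels}(b)(c), and use Lemma \ref{lem:r1simples} to check that $C$ acts on $\Delta(n)$ as the scalar $c(n) = (v_i^{n+1} + v_i^{-n-1})/(v_i - v_i^{-1})^2$, these scalars being pairwise distinct in $\Q(v)$ for distinct $n \in \pi$. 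The Lagrange interpolator
\[
 e := \prod_{n \in \pi'} \frac{C - c(n)}{c(\lambda_0) - c(n)} \in \Schur(\pi)
\]
is then a central element acting as $1$ on $\Delta(\lambda_0)$ and as $0$ on each $\Delta(n)$ with $n \in \pi'$. The delicate point at the heart of the argument is to show that $e$ is genuinely idempotent with $e \Schur(\pi) = J$; equivalently, that $C$ satisfies the polynomial identity $\prod_{n \in \pi}(C - c(n)) = 0$ in $\Schur(\pi)$. This is verified with the help of the straightening identities in Lemmas \ref{lem:commutation} and \ref{lem:idem-straightening}; once in hand, $e$ and $1-e$ provide the algebra decomposition $\Schur(\pi) \cong \End_{\Q(v)}(\Delta(\lambda_0)) \times \Schur(\pi')$, and semisimplicity follows by induction.
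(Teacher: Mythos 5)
Your classification half is sound, and is in fact a nice variant of what the paper does: the computation $0 = E_iF_i^{(b+1)}x_0 = [\lambda-b]_i\,F_i^{(b)}x_0$ correctly forces $\lambda = b \ge 0$, so every simple module has dominant highest weight; the paper instead obtains the classification as a by-product of a dimension count. For the semisimplicity half, however, your route through the quantum Casimir has a genuine gap exactly at the point you flag as delicate. What the straightening identities of Lemmas \ref{lem:commutation} and \ref{lem:idem-straightening} actually give is a filtration of each $\Schur 1_\mu$ by the submodules $M_k = \sum_{a\ge k}\Schur^- E_i^{(a)}1_\mu$, on whose successive quotients $C$ acts by the scalars $c(\mu+2k)$; from this one only deduces that $\prod_k\bigl(C - c(\mu+2k)\bigr)$ annihilates $\Schur 1_\mu$, and these scalars can repeat, since $c(m)=c(-m-2)$ (for instance, when $-3\in W\pi$ the layers with $m=-3$ and $m=1$ carry the same eigenvalue). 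So the commutation formulas only yield $\prod_{n\in\pi}(C-c(n))^{m_n}=0$ for some multiplicities $m_n$, which does not make your Lagrange interpolator $e$ idempotent. Showing that the repeated eigenvalues occur only on layers that are actually zero --- equivalently, that $C$ acts diagonalizably on the regular representation --- is essentially as strong as the semisimplicity you are trying to prove, and no argument for it is given.

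The gap is repairable from material you already have, and without the Casimir. Your observation that $J=\Schur 1_{\lambda_0}\Schur$ is a quotient of $\Delta(\lambda_0)\otimes_{\Q(v)}\Delta(\lambda_0)^*$ gives $\dim J\le d^2$ with $d=\dim\Delta(\lambda_0)$. On the other hand the representation $\Schur\to\End_{\Q(v)}(\Delta(\lambda_0))$ is surjective by Jacobson density (here $\End_\Schur(\Delta(\lambda_0))=\Q(v)$ because any endomorphism acts by a scalar on the one-dimensional highest weight space), so the image of the ideal $J$ is a nonzero two-sided ideal of the simple algebra $\End_{\Q(v)}(\Delta(\lambda_0))$, hence all of it; comparing dimensions shows $J\to\End_{\Q(v)}(\Delta(\lambda_0))$ is an isomorphism. (This also supplies the faithfulness you assert rather than prove.) But then $J$ is an ideal which is unital as a ring, so its identity element is automatically a central idempotent of $\Schur(\pi)$, and $\Schur(\pi)\cong J\times\Schur(\pi')$ with no polynomial identity for $C$ required. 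For comparison, the paper's own proof sidesteps all of this with a pure dimension count: $\dim\Schur\le\sum_{n\in\pi}(\dim\Delta(n))^2$ from the ideal chain and Corollary \ref{cor:kernel}(b), while $\dim\Schur\ge\sum_{n\in\pi}(\dim\Delta(n))^2$ by Wedderburn since the $\Delta(n)$ are pairwise non-isomorphic simple modules; equality of the two bounds forces semisimplicity and the classification simultaneously.
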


\begin{proof}
We proceed by induction on the cardinality of $\pi$. If $\pi =
\{n_0\}$ then we have $\Schur = \Schur(1_{n_0} + 1_{-n_0})\Schur =
\Schur 1_{n_0}\Schur$, since $\Schur 1_{-n_0} \Schur \subseteq \Schur
1_{n_0}\Schur$ by Lemma \ref{lem:idem-straightening}. Hence by the
surjectivity in Corollary \ref{cor:kernel}(b) we have
\[
  \dim_{\Q(v)} \Schur \le (\dim_{\Q(v)} \Delta(n_0))^2. 
\]
On the other hand, the standard theory of finite dimensional algebras
implies that
\[
  \dim_{\Q(v)} \Schur \ge (\dim_{\Q(v)} \Delta(n))^2
\]
since $\Delta(n_0)$ is a simple $\Schur$-module.  This proves equality
of dimensions, and the claims follow in case $|\pi| = 1$.

Now assume that $|\pi| \ge 2$, and that $n_0$ is maximal in $\pi$. Put
$\pi' = \pi - \{n_0\}$. By induction the algebra $\Schur(\pi')$ is
semisimple and $\dim_{\Q(v)} \Schur(\pi') = \sum_{n \in \pi'}
(\dim_{\Q(v)} \Delta(n))^2$. By Corollary \ref{cor:kernel}(a) the kernel
of $p_{\pi, \pi'}$ equals the ideal $\Schur 1_{n_0} \Schur$, so by the
Corollary \ref{cor:kernel}(b) we have $\dim_{\Q(v)} \ker p_{\pi,\pi'} \le
(\dim_{\Q(v)} \Delta(n_0))^2$. Hence we have
\begin{align*}
  \dim_{\Q(v)}\Schur &= \dim_{\Q(v)} \ker p_{\pi,\pi'} +
  \textstyle\sum_{n \in \pi'} (\dim_{\Q(v)} \Delta(n))^2 \\ & \le
  \textstyle\sum_{n \in \pi} (\dim_{\Q(v)} \Delta(n))^2.
\end{align*} 
On the other hand, since the $\Delta(n)$ are pairwise non-isomorphic
simple modules (indeed, no two of them have the same highest weight),
the standard theory of finite dimensional algebras implies that
\[
  \dim_{\Q(v)} \Schur \ge \textstyle\sum_{n \in \pi} (\dim_{\Q(v)}
  \Delta(n))^2.
\]
It follows that the dimensions agree, and thus that $\Schur$ is
semisimple, with simple modules as stated.
\end{proof}

\begin{rmk}
The proof also establishes the fact that the natural multiplication
map
\[
\Delta(n_0)\otimes \Delta(n_0)^* \to \Schur 1_{n_0} \Schur
\] 
in Corollary \ref{cor:kernel}(b) is actually an isomorphism of
$\Schur$-bimodules (in rank 1).  This can be proved another way, by
observing that the spanning set of nonzero $F^{(b)} 1_{n_0} E^{(a)}$ in
$\Schur 1_{n_0} \Schur = \Schur^- 1_{n_0} \Schur^+$ is linearly
independent, since the elements have distinct biweights.
\end{rmk}

\section{Quantum Serre relations}\label{sec:Serre}\noindent
Whenever the rank $|I|$ of the underlying root datum is at least 2,
certain relations among the negative part generators $F_i$ ($i \in I$)
must hold in any $\Schur(\pi)$, and exactly the same relations must
hold among the positive part generators $E_i$ ($i \in I$). The
relations are known as the quantum Serre relations, since they are
quantum analogues of the usual Serre relations in the universal
enveloping algebra of a semisimple Lie algebra. They arise in the
present context as consequences of the defining relations for
$\Schur(\pi)$.

\subsection{}\label{ss:Si}
Throughout this section we assume $|I| \ge 2$ and fix a finite
saturated set $\pi \subset \X^+$, and set $\Schur = \Schur(\pi)$.  Fix
some $i \in I$. Let $\pi^{(i)}$ be the set of all integers
$\bil{\alpha_i^\vee}{\lambda} \ge 0$ such that $\lambda \in W\pi$.  If
$0\le n = \bil{\alpha_i^\vee}{\lambda}$ is in $\pi^{(i)}$ then so are
all nonnegative integers in the set $\{n, n-2, \dots, -(n-2),
-n\}$. Thus $\pi^{(i)}$ is a saturated set of nonnegative integers for
a rank 1 root datum as considered in the preceding section. The Weyl
group in this rank 1 situation may be regarded as the multiplicative
group $\{-1,1\}$ acting by multiplication and we write $\pm \pi^{(i)}$
short for $-\pi^{(i)} \cup \pi^{(i)}$.  For any $n \in \pm \pi^{(i)}$
we define
\begin{equation}
  1_n = \textstyle\sum_{\lambda \in W\pi \colon\,
    \bil{\alpha_i^\vee}{\lambda} = n} 1_\lambda
\end{equation}
regarded as an element of $\Schur=\Schur(\pi)$. Let $\Schur^{(i)}$ be
the subalgebra of $\Schur$ generated by $E_i$, $F_i$, and the $1_n$
($n \in \pm \pi^{(i)}$).

\begin{prop}\label{prop:Si}
  The subalgebra $\Schur^{(i)}$ is isomorphic to the associative
  algebra with $1$ given by the generators $E_i$, $F_i$ and $\{1_n
  \colon n \in \pm\pi^{(i)} \}$ and subject to the relations
\begin{enumerate}
\item[\rm(a)] \quad $1_m 1_n = \delta_{m,n} 1_n$ for all $m,n \in \pm
\pi^{(i)}$; \quad $\sum_{n \in \pm \pi^{(i)}} 1_n = 1$; 

\item[\rm(b)] \quad $E_iF_i - F_i E_i = \sum_{n \in \pm \pi^{(i)}}
  [n]_i 1_n$;

\item[\rm(c)] \quad $E_i 1_n = 1_{n+2} E_i$, $1_n E_i = 1_{n-2}
  E_i$, $F_i 1_n = 1_{n-2} F_i$, $1_n F_i = F_i 1_{n+2}$
\end{enumerate}
where in the right hand side of {\rm(c)} we interpret $1_{n\pm 2} = 0$
whenever $n\pm 2 \notin \pm\pi^{(i)}$. Thus, $\Schur^{(i)}$ is isomorphic
to a rank $1$ algebra determined by the saturated set $\pi^{(i)}$ and
a rank $1$ root datum.
\end{prop}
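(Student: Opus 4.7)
The plan is to define the natural surjection from the rank-one algebra $\widetilde\Schur := \Schur(\pi^{(i)})$ (attached to the rank-one root datum with symmetrizing constant $d_i$) onto $\Schur^{(i)}$ and prove it is an isomorphism, exploiting the semisimplicity of $\widetilde\Schur$ established in Proposition~\ref{prop:r1ss}.

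First I would define $\phi \colon \widetilde\Schur \to \Schur^{(i)}$ by sending the abstract generators $E, F, 1_n$ to $E_i, F_i, 1_n$ (the elements of $\Schur^{(i)}$ defined in the excerpt), and verify that relations (a)--(c) of the statement hold in $\Schur^{(i)}$. Relation (a) is immediate from the orthogonality and completeness of $\{1_\lambda : \lambda \in W\pi\}$ partitioned by the value of $\bil{\alpha_i^\vee}{\lambda}$; relation (b) is obtained by summing \ref{ss:defrels}(b) over $\lambda \in W\pi$ according to that partition; and relation (c) follows by summing \ref{ss:defrels}(c) similarly, using that the convention $1_{\lambda\pm\alpha_i}=0$ outside $W\pi$ is consistent with $1_{n\pm 2}=0$ outside $\pm\pi^{(i)}$. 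Surjectivity of $\phi$ is clear from the definition of $\Schur^{(i)}$.

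The main work is injectivity. Since $\widetilde\Schur$ is semisimple, $\ker\phi$ decomposes as a direct sum of blocks $A_n \cong \End(\Delta(n))$ for $n \in \pi^{(i)}$, so it suffices to show $\phi|_{A_n} \ne 0$ for each $n$. For $n = N := \max\pi^{(i)}$, any $\lambda_0 \in W\pi$ with $\bil{\alpha_i^\vee}{\lambda_0}=N$ satisfies $\lambda_0+\alpha_i \notin W\pi$ (as $N+2 \notin \pm\pi^{(i)}$), so the biweight-distinct elements $F_i^{(b)} 1_{\lambda_0} E_i^{(a)}$ for $0 \le a,b \le N$ are nonzero in $\Schur(\pi)$. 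The elements $F^{(b)} 1_N E^{(a)}$ (for $0 \le a,b \le N$) lie in $A_N$ since $1_N$ annihilates $\Delta(k)$ for $k<N$, and span it (they are nonzero scalar multiples of matrix units); their images $\phi(F^{(b)} 1_N E^{(a)}) = \sum_{\mu} F_i^{(b)} 1_\mu E_i^{(a)}$ (with $\mu$ running over $W\pi$ with $\bil{\alpha_i^\vee}{\mu}=N$) have pairwise disjoint biweight supports across varying $(a,b)$, as one checks from $\bil{\alpha_i^\vee}{\mu-\mu'}=0$ forcing $\mu = \mu'$ within this index set. Since the $\lambda_0$-summand is always nonzero, the $(N+1)^2$ images are linearly independent in $\Schur^{(i)}$, forcing $\phi|_{A_N}$ to be injective.

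To handle $n < N$ I would proceed by downward induction on $|\pi^{(i)}|$. Corollary~\ref{cor:kernel} applied in rank one yields $A_N = \widetilde\Schur \cdot 1_N \cdot \widetilde\Schur$ and $\widetilde\Schur/A_N \cong \Schur(\pi^{(i)} \setminus \{N\})$, so the task reduces to showing that the induced surjection $\widetilde\Schur/A_N \twoheadrightarrow \Schur^{(i)}/\phi(A_N)$ is injective. The plan is to identify $\Schur^{(i)}/\phi(A_N)$ as the analogous rank-one subalgebra $\Schur(\pi'')^{(i)}$ of $\Schur(\pi'')$ for some saturated $\pi'' \subseteq \pi$ with $(\pi'')^{(i)} = \pi^{(i)} \setminus \{N\}$, so that the inductive hypothesis applies. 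The principal obstacle is precisely this identification: since the set $T_N := \{\mu \in W\pi : \bil{\alpha_i^\vee}{\mu}=N\}$ need not be a union of $W$-orbits of maximal elements of $\pi$, one cannot directly apply Corollary~\ref{cor:kernel} to remove $T_N$ in a single step. One must instead iterate Corollary~\ref{cor:kernel} along a carefully chosen chain $\pi = \pi^{(0)} \supsetneq \pi^{(1)} \supsetneq \cdots \supsetneq \pi^{(k)} = \pi''$ of saturated subsets (each obtained by removing one maximal dominant weight), using Lemma~\ref{lem:idem-straightening} to track how $\phi(A_N) = \Schur^{(i)} \cdot 1_N \cdot \Schur^{(i)}$ matches the cumulative intersection $\Schur^{(i)} \cap \ker(\Schur(\pi) \twoheadrightarrow \Schur(\pi''))$.
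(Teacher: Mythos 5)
Your overall strategy is the paper's: verify the relations to get a surjection $\phi$ from the rank-one algebra $\widetilde\Schur=\Schur(\pi^{(i)})$ onto $\Schur^{(i)}$, invoke the semisimplicity of $\widetilde\Schur$ (Proposition \ref{prop:r1ss}) to reduce injectivity to showing that no simple block $A_n\cong\End(\Delta(n))$ dies, i.e.\ that $\phi(A_n)\neq 0$ for every $n\in\pi^{(i)}$ --- equivalently, that $\Schur^{(i)}$ has a simple module of highest weight $n$ for each such $n$. Up to and including the top case $n=N$ this is sound (though you should justify why $F_i^{(b)}1_{\lambda_0}E_i^{(a)}\neq 0$, e.g.\ by letting it act on a suitable $\Delta(\lambda_0^+)$; for the purpose of showing $\phi(A_N)\neq 0$ a single nonzero image already suffices, since $A_N$ is a simple algebra).

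The genuine gap is your treatment of $n<N$. The downward induction requires identifying $\Schur^{(i)}/\phi(A_N)$ with the rank-one subalgebra of some $\Schur(\pi'')$ with $(\pi'')^{(i)}=\pi^{(i)}\setminus\{N\}$, and, as you yourself note, this identification is the ``principal obstacle'': such a saturated $\pi''$ need not exist, and even when it does there is no a priori reason that $\phi(A_N)=\Schur^{(i)}1_N\Schur^{(i)}$ equals $\Schur^{(i)}\cap\ker\bigl(\Schur(\pi)\to\Schur(\pi'')\bigr)$ --- the set $T_N=\{\mu\in W\pi:\bil{\alpha_i^\vee}{\mu}=N\}$ is generally not a union of $W$-orbits, and Corollary \ref{cor:kernel} only removes whole orbits of maximal dominant weights. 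The ``carefully chosen chain'' you gesture at is not carried out and would have to prove exactly the statement at issue. The induction is, however, unnecessary: treat every $n\in\pi^{(i)}$ on the same footing as $N$ by exhibiting, for each $n$, an $\Schur^{(i)}$-module containing a maximal vector of $i$-weight $n$. If $n=\bil{\alpha_i^\vee}{\lambda}$ for some dominant $\lambda\in\pi$, restrict $\Delta(\lambda)$ to $\Schur^{(i)}$; its maximal vector $x_0$ satisfies $E_ix_0=0$ and generates a highest weight $\Schur^{(i)}$-module of highest weight $n$, whose head is $\Delta(n)$, so $A_n$ acts nontrivially. If $n$ is only achieved at a non-dominant $\lambda=w(\lambda')$, work instead inside the ideal $J=\Schur(\pi_{\lambda'})1_{\lambda'}\Schur(\pi_{\lambda'})$, where the vector $1_\lambda$ is killed by $E_i$ and again generates a highest weight $\Schur^{(i)}$-module of highest weight $n$. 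This is precisely how the paper closes the argument, with no recursion on quotient algebras.
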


\begin{proof}
  Checking that relations (a)--(c) hold in $\Schur^{(i)}$ is trivial
  from the definitions and left to the reader. The fact that these
  relations hold in $\Schur^{(i)}$ implies that $\Schur^{(i)}$ is a
  homomorphic image of the rank 1 algebra defined by the set
  $\pi^{(i)}$ and a rank $1$ root datum. It follows from Proposition
  \ref{prop:r1ss} that $\Schur^{(i)}$ is semisimple.  To finish the
  argument, we need to compare dimensions. For this it is enough to
  show that $\Schur^{(i)}$ has at least as many simple modules as its
  rank 1 covering algebra.

  Let $n \in \pi^{(i)}$. By definition of $\pi^{(i)}$, there is some
  $\lambda \in W\pi$ such that $n = \bil{\alpha_i^\vee}{\lambda}$. If
  $\lambda \in \pi$ then we may regard $\Delta(\lambda)$ as an
  $\Schur^{(i)}$-module by restriction. The generating maximal vector
  $x_0 \in \Delta(\lambda)$ is still maximal for the action of
  $\Schur^{(i)}$, and hence the $\Schur^{(i)}$-submodule it generates
  is a simple module of highest weight $n$. If $\lambda \notin \pi$
  then there is some $\lambda' \in \pi$ and some $1 \ne w \in W$ such
  that $\lambda = w(\lambda')$. We may regard the ideal $J =
  \Schur(\pi_{\lambda'}) 1_{\lambda'} \Schur(\pi_{\lambda'})$ as an
  $\Schur$-module via the natural quotient map $\Schur \to
  \Schur(\pi_{\lambda'})$. The weight $\lambda$ is by assumption on
  the positive side of the reflecting hyperplane for the simple root
  $\alpha_i$, so $\lambda + \alpha_i \notin W\pi$ and thus the vector
  $1_\lambda \in J$ is killed by the action of $E_i$. This implies
  that the $\Schur^{(i)}$-submodule of $J$ generated by $1_\lambda$ is
  a simple module of highest weight $n$.
\end{proof}

\subsection{} \label{ss:K_h} We will need to consider the elements
$K_h = \textstyle \sum_{\lambda \in W\pi} v^{\bil{h}{\lambda}} \, 1_\lambda$ (any
$h \in \X^\vee$) defined in \ref{ss:maxvecs}(b).  Observe that $K_0 =
1$ and $K_h K_{h'} = K_{h+h'}$ for any $h,h' \in \X^\vee$. Consequently
$K_h$ is invertible and $K_h^{-1} = K_{-h}$. Since $K_h 1_\lambda =
v^{\bil{h}{\lambda}}\, 1_\lambda$ for any $\lambda \in W\pi$ it is
clear that the $K_h$ act semisimply on any finite dimensional
$\Schur$-module.

We are particularly interested in the elements $\overline{K}_i =
K_{d_i \alpha_i^\vee}$, $\overline{K}^{-1}_i = K_{-d_i \alpha_i^\vee}$
for our fixed $i \in I$. (See \ref{ss:root-datum} for the definition of
$d_i$.) We have
\begin{equation}
  \overline{K}_i = \textstyle \sum_{\lambda \in W \pi}
  v_i^{\bil{\alpha_i^\vee}{\lambda}} \, 1_\lambda \, ; \qquad
  \overline{K}^{-1}_i = \textstyle \sum_{\lambda \in W \pi}
  v_i^{-\bil{\alpha_i^\vee}{\lambda}} \, 1_\lambda
\end{equation}
and from the defining relations \ref{ss:defrels}(b) and the definition
of $[n]_i$ in \ref{ss:scalars} we obtain the relations
\begin{gather}
  E_i F_j - F_j E_i = \delta_{i,j} \frac{\overline{K}_i -
    \overline{K}^{-1}_i}{v_i - v_i^{-1}}\\  \overline{K}_i E_j
  \overline{K}_i^{-1} = v_i^{\bil{\alpha_i^\vee}{\alpha_j}} E_j\, ;
  \quad \overline{K}_i F_j \overline{K}_i^{-1} =
  v_i^{-\bil{\alpha_i^\vee}{\alpha_j}} F_j
\end{gather}
which hold in the algebra $\Schur = \Schur(\pi)$, for any $j \in I$.
From (a) and the definition of the $1_n$ we also have
\begin{equation}
  \overline{K}_i = \sum_{n \in \pm \pi^{(i)}} v_i^n\, 1_n; \qquad 
  \overline{K}_i^{-1} = \sum_{n \in \pm \pi^{(i)}} v_i^{-n}\, 1_n;
\end{equation}
this shows that $\Schur^{(i)}$ is generated by the four elements
$E_i$, $F_i$, $\overline{K}_i$, $\overline{K}_i^{-1}$.  A simple
direct calculation shows that for each $n \in \pi^{(i)}$ we have
\begin{equation}
  1_n = \prod_{m \in \pm \pi^{(i)} \colon\, m \ne n}
  \frac{\overline{K}_i - v_i^m}{v_i^n - v_i^m}.
\end{equation}
This is needed in the proof of the following alternative presentation
of the algebra $\Schur^{(i)}$.

\begin{prop}\label{prop:alt-Si}
  The algebra $\Schur^{(i)}$ is isomorphic to the associative algebra
  with $1$ given by the generators $E_i$, $F_i$, $\overline{K}_i$,
  $\overline{K}_i^{-1}$ and subject to the relations
  
  {\rm(a)} \quad $\overline{K}_i \overline{K}_i^{-1} = 1$; \quad $E_i F_i
    - F_i E_i = \dfrac{\overline{K}_i - \overline{K}^{-1}_i}{v_i -
      v_i^{-1}}$;

  {\rm(b)} \quad $\overline{K}_i E_i \overline{K}_i^{-1} = v_i^{2} E_i$;
    \quad $\overline{K}_i F_i \overline{K}_i^{-1} = v_i^{-2} F_i$;

  {\rm(c)} \quad $\prod_{n \in \pm \pi^{(i)}} \,(\overline{K}_i -
  v_i^n) = 0$.
\end{prop}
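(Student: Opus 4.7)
\emph{Plan.} The idea is to construct mutually inverse algebra homomorphisms between $\Schur^{(i)}$ (presented as in Proposition~\ref{prop:Si}) and the algebra $A$ defined by the generators $E_i, F_i, \overline{K}_i, \overline{K}_i^{-1}$ subject to relations (a), (b), (c) of the statement. The map $\psi \colon A \to \Schur^{(i)}$ is the easy direction: relations (a) and (b) were already observed to hold in $\Schur^{(i)}$ in \ref{ss:K_h}, and relation (c) follows from the expansion $\overline{K}_i - v_i^m = \sum_{n \in \pm\pi^{(i)}} (v_i^n - v_i^m)\, 1_n$, which gives
\[
\prod_{m}(\overline{K}_i - v_i^m) = \sum_{n}\Bigl(\prod_{m} (v_i^n - v_i^m)\Bigr) 1_n = 0
\]
since each coefficient product contains the zero factor $(v_i^n - v_i^n)$.

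For the converse map $\phi \colon \Schur^{(i)} \to A$, I would set $\tilde{1}_n := p_n(\overline{K}_i) \in A$ with $p_n(x) = \prod_{m \ne n}(x - v_i^m)/(v_i^n - v_i^m)$ as in formula \ref{ss:K_h}, and verify the relations of Proposition~\ref{prop:Si} for the triple $(E_i, F_i, \tilde{1}_n)$. Relations (a) (orthogonality and completeness) follow from the polynomial identities $p_n p_m \equiv \delta_{nm} p_n$ and $\sum_n p_n \equiv 1 \pmod{p(x)}$ (valid as polynomial equations, since both sides have degree at most $N-1$ and agree at the $N$ points $v_i^m$, $m \in \pm\pi^{(i)}$) together with relation (c) of $A$. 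Relation (b) follows from $\overline{K}_i = \sum_n v_i^n \tilde{1}_n$ in $A$ (Lagrange interpolation of $x$ modulo $p(x)$) and the consequent identity $\overline{K}_i^{-1} = \sum_n v_i^{-n} \tilde{1}_n$ (checked by direct multiplication, using orthogonality and (a) of $A$); substituting into relation (a) of $A$ produces $E_i F_i - F_i E_i = \sum_n [n]_i \tilde{1}_n$.

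The main obstacle is relation (c) of Proposition~\ref{prop:Si}: $E_i \tilde{1}_n = \tilde{1}_{n+2} E_i$, together with its three companions, subject to the convention that $\tilde{1}_{n\pm 2} := 0$ when $n \pm 2 \notin \pm\pi^{(i)}$. The approach is a spectral argument. From relation (b) of $A$ one obtains $\overline{K}_i E_i = v_i^2 E_i \overline{K}_i$, and since $(x - v_i^n) p_n(x)$ is a nonzero scalar multiple of $p(x) = \prod_{m}(x - v_i^m)$, one gets $\overline{K}_i \tilde{1}_n = v_i^n \tilde{1}_n$ in $A$. Combining these, $\overline{K}_i (E_i \tilde{1}_n) = v_i^{n+2}(E_i \tilde{1}_n)$, and therefore $0 = p(\overline{K}_i)(E_i \tilde{1}_n) = p(v_i^{n+2})\,E_i \tilde{1}_n$. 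Since $v \in \Q(v)$ is transcendental, the scalar $p(v_i^{n+2})$ is zero only when $n+2 \in \pm\pi^{(i)}$; when $n+2 \notin \pm\pi^{(i)}$ we conclude $E_i \tilde{1}_n = 0$ as required. When $n+2 \in \pm\pi^{(i)}$, the same eigenvalue analysis gives $\tilde{1}_{n+2}(E_i \tilde{1}_n) = E_i \tilde{1}_n$ and $\tilde{1}_{n+2}(E_i \tilde{1}_m) = 0$ for all $m \in \pm\pi^{(i)}$ with $m \ne n$; summing over $m$ in $\tilde{1}_{n+2} E_i = \tilde{1}_{n+2} E_i \sum_m \tilde{1}_m$ then yields $\tilde{1}_{n+2} E_i = E_i \tilde{1}_n$. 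The relations involving $F_i$ follow by an entirely analogous argument, or by applying the anti-involution $*$.

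Finally, the two maps are mutually inverse: in $A$, $\phi \circ \psi$ fixes $\overline{K}_i$ because $\sum_n v_i^n \tilde{1}_n = \overline{K}_i$; in $\Schur^{(i)}$, $\psi \circ \phi$ fixes $1_n$ because $p_n(\overline{K}_i) = \sum_m p_n(v_i^m)\, 1_m = 1_n$; and both compositions fix $E_i, F_i$ by construction. This establishes the claimed isomorphism.
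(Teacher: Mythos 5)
Your proposal is correct and follows essentially the same route as the paper's proof: exhibit mutually inverse algebra homomorphisms determined on generators by $1_n \mapsto p_n(\overline{K}_i)$ in one direction and $\overline{K}_i \mapsto \sum_n v_i^n\, 1_n$, $\overline{K}_i^{-1} \mapsto \sum_n v_i^{-n}\, 1_n$ in the other, and check the defining relations. The paper leaves all verifications to the reader, whereas you actually carry them out --- in particular your eigenvalue argument establishing $E_i \tilde{1}_n = \tilde{1}_{n+2} E_i$, including the boundary case $n+2 \notin \pm\pi^{(i)}$ where $p(v_i^{n+2}) \ne 0$ forces $E_i \tilde{1}_n = 0$, correctly supplies the one genuinely delicate check.
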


\begin{proof}
  Let $\Schur^{(i)}$ be the algebra given by the presentation in
  Proposition \ref{prop:Si}, and let $Z$ be the algebra given by the
  presentation of the current proposition. Define an algebra map
  $\psi$ from $\Schur^{(i)} \to Z$ by sending $E_i$ to $E_i$, $F_i$ to
  $F_i$, and $1_n$ to $\prod_{m \in \pm \pi^{(i)} \colon\, m \ne n}
  \frac{\overline{K}_i - v_i^m}{v_i^n - v_i^m}$. Then check that the
  elements $\psi(E_i)$, $\psi(F_i)$, and $\psi(1_n)$ for $n \in \pm
  \pi^{(i)}$ satisfy the defining relations (a)--(c) in
  \ref{prop:Si}. 

  On the other hand, define an algebra map $\psi'$ from $Z \to
  \Schur^{(i)}$ by sending $E_i$ to $E_i$, $F_i$ to $F_i$, 
  $\overline{K}_i$ to $\sum_{n \in \pm \pi^{(i)}} v_i^n\, 1_n$, and
  $\overline{K}_i^{-1}$ to $\sum_{n \in \pm \pi^{(i)}} v_i^{-n}\,
  1_n$. Verify that the
  elements $\psi'(E_i)$, $\psi'(F_i)$, $\psi'(\overline{K}_i)$,
  $\psi'(\overline{K}_i^{-1})$ satisfy the defining relations (a)--(c)
  given above. The result follows.
\end{proof}

\subsection{} \label{ss:ad-endo} From \ref{ss:K_h}(d) we have
$\overline{K}_i 1_n = v_i^n 1_n$ for each $n \in \pm \pi^{(i)}$. From
this it follows easily that if $M$ is an arbitrary
$\Schur^{(i)}$-module then
\begin{equation}
  1_n M = \{ x \in M \colon \overline{K}_i x = v_i^{n}\, x \}. 
\end{equation}
Thus, we can define weight vectors of weight $n$ in an
$\Schur^{(i)}$-module $M$ as elements $x \in M$ which are fixed under
left multiplication by $1_n$, or equivalently as eigenvectors for the
operator $\overline{K}_i$ belonging to the eigenvalue $v_i^n$.

Now we define $\Q(v)$-linear endomorphisms $\ad \overline{K}_i$, $\ad
\overline{K}^{-1}_i$, $\ad E_i$, and $\ad F_i$ of the algebra $\Schur
= \Schur(\pi)$ by the rules:
\begin{gather}
  (\ad \overline{K}_i) x = \overline{K}_i x \overline{K}_i^{-1} \, ;\quad
  (\ad \overline{K}^{-1}_i) x = \overline{K}^{-1}_i x \overline{K}_i \\
  (\ad E_i) x = E_i x - (\overline{K}_i x \overline{K}_i^{-1})E_i\, ; \quad
  (\ad F_i) x = (F_i x - x F_i) \overline{K}_i.
\end{gather}
for any $x \in \Schur$, where the products on the right hand side of
each equation take place in the algebra $\Schur$.  We claim that these
endomorphisms satisfy the defining relations of $\Schur^{(i)}$ given
in Proposition \ref{prop:alt-Si}. This is verified by direct
calculation, left to the reader. It follows that these endomorphisms
define an $\Schur^{(i)}$-module structure on the algebra $\Schur =
\Schur(\pi)$.

\subsection{}\label{ss:derive}
Now we can derive the quantum Serre relations. Taking $x = E_j$ for $j
\ne i$, we have from \ref{ss:ad-endo}(b), (c) and the relations
\ref{ss:K_h}(b), (c) that
\begin{equation}
  (\ad \overline{K}_i) E_j = v_i^{\bil{\alpha_i^\vee}{\alpha_j}} E_j\,
  ; \quad (\ad F_i) E_j = 0.
\end{equation}
From this it follows that $E_j$ is a \emph{lowest} weight vector of
weight $a_{ij} = \bil{\alpha_i^\vee}{\alpha_j}$ for the adjoint action
of $\Schur^{(i)}$. 

The classification of rank 1 representations worked out in Section
\ref{sec:rank1} tells us that the $\Schur^{(i)}$-submodule of $\Schur$
generated by $E_j$ is isomorphic to $\Delta(-a_{ij})$. In particular,
$(\ad E_i)^{-a_{ij}} E_j$ is a highest weight vector in the module,
and thus
\begin{equation}
  (\ad E_i)^{1-a_{ij}} E_j = 0.
\end{equation}
On the other hand, one can show by an easy induction on $r$ that for
any $r$ we have
\begin{equation}
  (\ad E_i)^{r} E_j = \sum_{s=0}^r (-1)^s \sqbinom{r}{s}_i E_i^{r-s}
  E_j E_i^s.
\end{equation}
This leads to the following result.

\begin{thm}\label{thm:q-Serre}
  Let the root datum be of rank at least $2$.  Let $\pi$ be an
  arbitrary finite saturated subset of $\X^+$ and let $\Schur =
  \Schur(\pi)$. For any $i,j \in I$ with $i \ne j$ the quantum
  Serre relations
  \begin{align}
    \textstyle\sum_{s=0}^{1-a_{ij}} (-1)^s \sqbinom{1-a_{ij}}{s}_i
    E_i^{1-a_{ij}-s} E_j E_i^s &= 0 ; \\ \textstyle \sum_{s=0}^{1-a_{ij}}
    (-1)^s \sqbinom{1-a_{ij}}{s}_i F_i^{1-a_{ij}-s} F_j F_i^s &= 0
  \end{align}
  hold in the algebra $\Schur$, where $a_{ij} =
  \bil{\alpha_i^\vee}{\alpha_j}$.
\end{thm}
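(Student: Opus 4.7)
The plan is to pull together the ingredients already assembled in Sections \ref{ss:Si}--\ref{ss:derive}. In \ref{ss:ad-endo} the adjoint action makes $\Schur$ into an $\Schur^{(i)}$-module, and in \ref{ss:derive}(a) one checks that $E_j$ is a weight vector of weight $a_{ij}=\bil{\alpha_i^\vee}{\alpha_j}$ that is killed by $\ad F_i$. Since $a_{ij}\le 0$ for $i\ne j$, this exhibits $E_j$ as a \emph{lowest} weight vector of weight $a_{ij}$, equivalently (via the automorphism $\omega$ of \ref{ss:star-omega} applied to the rank-one algebra $\Schur^{(i)}$) a highest weight vector of weight $-a_{ij}\ge 0$ for $\Schur^{(i)}$ acting with the roles of $E_i$ and $F_i$ swapped.

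The rank-one classification (Proposition \ref{prop:r1ss} and Lemma \ref{lem:r1simples}) then identifies the $\Schur^{(i)}$-submodule of $\Schur$ generated by $E_j$ as a simple quotient of $\Delta(-a_{ij})$; in particular its set of weights is $\{-a_{ij},-a_{ij}+2,\dots,a_{ij}\}$, and no weight exceeds $-a_{ij}$. Therefore the vector $(\ad E_i)^{1-a_{ij}}E_j$, whose weight would be $a_{ij}+2(1-a_{ij})=2-a_{ij}>-a_{ij}$, must vanish. This is exactly \ref{ss:derive}(b). Substituting this vanishing into the expansion \ref{ss:derive}(c) with $r=1-a_{ij}$ yields
\[
0=(\ad E_i)^{1-a_{ij}}E_j=\sum_{s=0}^{1-a_{ij}}(-1)^s\sqbinom{1-a_{ij}}{s}_i E_i^{1-a_{ij}-s}E_j E_i^s,
\]
which is the first quantum Serre relation asserted in the theorem.

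To obtain the $F$-relation, I would apply the anti-involution $*$ from \ref{ss:star-omega} to the $E$-relation just proved. Since $*$ is $\Q(v)$-linear, reverses products, and interchanges $E_k\leftrightarrow F_k$, it sends each monomial $E_i^{1-a_{ij}-s}E_jE_i^s$ to $F_i^s F_j F_i^{1-a_{ij}-s}$. Reindexing the summation via $s\mapsto (1-a_{ij})-s$ and invoking the symmetry $\sqbinom{r}{s}_i=\sqbinom{r}{r-s}_i$ of the $v_i$-binomial coefficients then converts the resulting identity into the second Serre relation.

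There is really no substantive obstacle at this stage: every nontrivial fact has been prepared. The rank-one classification (the core input) does the real work by forcing the module $\Schur^{(i)}\cdot E_j$ under the adjoint action to be of bounded weight; the combinatorial identity \ref{ss:derive}(c) and the verification that the formulas of \ref{ss:ad-endo} satisfy the defining relations of $\Schur^{(i)}$ are routine inductions that have already been invoked. The only mild care required is to make sure the $\Schur^{(i)}$-submodule generated by $E_j$ is nonzero (so that it really is isomorphic to $\Delta(-a_{ij})$ rather than degenerating), which is immediate since $E_j$ itself lies in it as a nonzero weight vector.
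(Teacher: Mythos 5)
Your argument for the $E$-relation is the paper's own proof: the same adjoint $\Schur^{(i)}$-module structure from \ref{ss:ad-endo}, the same identification of $E_j$ as a lowest weight vector generating a copy of $\Delta(-a_{ij})$ via the rank-one classification, and the same expansion of $(\ad E_i)^{r}E_j$. The only divergence is in deducing the $F$-relation: the paper applies the algebra isomorphism $\omega_{\pi'}\colon\Schur(-w_0(\pi))\to\Schur(\pi)$ of \ref{ss:star-omega}(c) to the $E$-relation holding in $\Schur(-w_0(\pi))$, which yields the $F$-relation directly with no reindexing, whereas you apply the anti-involution $*$ and reindex using $\sqbinom{r}{s}_i=\sqbinom{r}{r-s}_i$; your version works equally well, the sum merely acquiring an overall nonzero factor $(-1)^{1-a_{ij}}$.
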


\begin{proof}
  To get (a) we put $r = 1-a_{ij}$ and combine \ref{ss:derive}(b) and
  (c). To get (b), we let $\pi' = -w_0(\pi)$ and consider the algebra
  isomorphism $\omega_{\pi'}$ defined in \ref{ss:star-omega}, which
  takes $\Schur(\pi')$ isomorphically onto $\Schur(\pi)$ and
  interchanges $E_j$ with $F_j$ for all $j \in I$. When the
  isomorphism is applied to relation (a) above, which holds in the
  algebra $\Schur(\pi')$, we obtain the relation (b) in the algebra
  $\Schur(\pi)$.
\end{proof}

\section{Semisimplicity in higher ranks}\label{sec:general-case}
\noindent
We consider a general root datum of rank at least 2.  We put $\Schur =
\Schur(\pi)$ where $\pi$ is a finite saturated subset of $\X^+$. We
have already shown the existence and uniqueness (up to isomorphism) of
simple modules of highest weight $\lambda$, for each $\lambda \in
\pi$. First we finish the classification of the simple
$\Schur$-modules, by showing that there are no others besides the ones
constructed thus far.  This implies that $\Schur$ is semisimple.  The
argument is nearly self-contained, but we do need two well known facts
from the representation theory of complex semisimple Lie algebras: the
classification of the finite dimensional simple modules, and Weyl's
theorem on complete reducibility.

Given any fixed $i \in I$, let $\Schur^{(i)}$ be the rank 1 subalgebra
(see \ref {ss:Si}) of $\Schur$ generated by $E_i$, $F_i$ along with
all $1_n$ for $n \in \pm \pi^{(i)}$.

\begin{prop}
  Let $\Schur = \Schur(\pi)$.  If $L$ is a simple left $\Schur$-module
  then $L$ is generated by a highest weight vector of some dominant
  weight $\lambda \in \pi$. Thus $L$ is isomorphic to the simple
  quotient of $\Delta(\lambda)$.
\end{prop}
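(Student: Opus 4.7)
The plan is to produce a highest weight vector $x_0 \in L$ of weight $\mu \in W\pi$, argue it is actually maximal in the sense $E_i x_0 = 0$ for all $i$, and then use the rank-1 classification of Proposition \ref{prop:r1ss} one coordinate at a time to deduce that $\mu$ is dominant. Finally, uniqueness of the simple module of a given highest weight (the corollary following Lemma \ref{lem:maxvec}) identifies $L$ with the simple quotient of $\Delta(\mu)$.

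First I would observe that, being simple over a finite-dimensional algebra (Lemma \ref{lem:fd}), $L$ is itself finite dimensional, so by the discussion in \ref{ss:maxvecs} it possesses a highest weight vector $x_0$ of some weight $\mu \in W\pi$. For every $i \in I$ the vector $E_i x_0$ has left weight $\mu + \alpha_i$, and because $\mu + \alpha_i \dom \mu$, the defining property of a highest weight gives $1_{\mu + \alpha_i} L = 0$; hence $E_i x_0 = 0$ and $x_0$ is a maximal vector. Since $L$ is simple and $\Schur x_0 \ne 0$, we have $L = \Schur x_0$. It therefore remains to show $\mu \in \pi$, which, since $\pi \subseteq \X^+$ is saturated and thus $W\pi \cap \X^+ = \pi$, is equivalent to showing $\mu$ is dominant.

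To establish dominance I would fix $i \in I$ and restrict the action to the rank-1 subalgebra $\Schur^{(i)}$ of \ref{ss:Si}. Since $\mu \in W\pi$, the integer $n_i := \bil{\alpha_i^\vee}{\mu}$ lies in $\pm \pi^{(i)}$, so $1_{n_i} \cdot x_0 = x_0$ and $x_0$ is a weight vector of weight $n_i$ for $\Schur^{(i)}$. Combined with $E_i x_0 = 0$, the vector $x_0$ is a maximal vector in the cyclic $\Schur^{(i)}$-submodule $V_i := \Schur^{(i)} x_0$. By Lemma \ref{lem:maxvec}(d) applied to $\Schur^{(i)}$, $V_i$ has a unique simple quotient, which is generated by the image of $x_0$ and therefore has highest weight $n_i$. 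But by the rank-1 classification (Proposition \ref{prop:r1ss}) every simple $\Schur^{(i)}$-module is of the form $\Delta(n)$ for some $n \in \pi^{(i)} \subseteq \Z_{\ge 0}$. Hence $n_i \in \pi^{(i)}$ and in particular $n_i \ge 0$.

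Letting $i$ vary we conclude $\mu$ is dominant, so $\mu \in \pi$. The corollary following Lemma \ref{lem:maxvec} (uniqueness up to isomorphism of simple modules of a given highest weight) then identifies $L$ with the simple quotient of $\Delta(\mu)$, as required. The only non-bookkeeping step is the rank-1 reduction in the middle paragraph, where one must verify that restriction to $\Schur^{(i)}$ preserves enough structure to produce a genuine simple $\Schur^{(i)}$-module quotient of highest weight $n_i$; once that is in hand, the positivity of $n_i$ is immediate from Proposition \ref{prop:r1ss}.
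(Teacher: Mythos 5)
Your proposal is correct and follows essentially the same route as the paper: locate a highest weight vector $x_0$ of weight $\mu\in W\pi$, restrict to the rank-$1$ subalgebras $\Schur^{(i)}$, and invoke Proposition \ref{prop:r1ss} to force $\bil{\alpha_i^\vee}{\mu}\ge 0$ for every $i$, whence $\mu\in W\pi\cap\X^+=\pi$. The only cosmetic difference is that the paper uses semisimplicity of $\Schur^{(i)}$ to conclude the cyclic submodule $\Schur^{(i)}x_0$ is itself simple, whereas you pass to its unique simple quotient; both land on the same classification.
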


\begin{proof}
  Since $L = \bigoplus_{\lambda \in W\pi} 1_\lambda L$ it is clear that
  $L$ contains a highest weight vector, say $x_0$, of weight $\lambda
  \in W\pi$.  Since $L$ is simple, it is generated by $x_0$. For each
  $i \in I$ we restrict the action of $\Schur$ on $L$ to the rank 1
  subalgebra $\Schur^{(i)}$. Then $x_0$ is an $\Schur^{(i)}$-maximal
  vector of weight $n = \bil{\alpha_i^\vee}{\lambda}$, so by Lemma
  \ref{lem:maxvec} the $\Schur^{(i)}$-submodule it generates has a
  unique simple quotient. By Proposition \ref{prop:r1ss},
  $\Schur^{(i)}$ is a semisimple algebra, so in fact that submodule is
  already simple as a $\Schur^{(i)}$-module, and thus $n =
  \bil{\alpha_i^\vee}{\lambda} \ge 0$. Since this holds for each $i
  \in I$, we have shown that $\lambda$ is a dominant weight. In other
  words, $\lambda \in W\pi \cap \X^+ = \pi$, as desired.
\end{proof}

This result completes the classification of the simple
$\Schur$-modules. For each $\lambda \in \pi$, there is a unique (up to
isomorphism) simple $\Schur$-module, which appears as the unique
simple quotient of $\Delta(\lambda)$, and this gives a complete set of
isomorphism classes of simple $\Schur$-modules.

\subsection{}\label{ss:Bourbaki}
According to \cite[Chapter 2, \S7.10]{Bourbaki} (see Corollary 2 after
Proposition 26): \emph{If $A$ is any integral domain and $Q$ is its
  field of fractions, then for any $A$-submodule $M$ of a vector space
  $V$ over $Q$ the natural map $Q \otimes_A M \to V$ (given by $\sum
  a_j \otimes m_j \mapsto \sum a_jm_j$) is injective}. 

In the above situation, we note that $M$ is torsion-free over $A$.
Thus, if $A$ is a principal ideal domain and $M$ is finitely generated
over $A$ then it follows that $M$ is free as an $A$-module.

\begin{lem}\label{lem:int}
  Let $A$ be an integral domain and $Q$ its field of
  fractions. Suppose that $M$ is an $A$-submodule of $V$, where $V$ is
  a vector space over $Q$.

  {\rm(a)} The natural map $Q \otimes_A M \to V$ is an
    isomorphism of vector spaces if and only if $M$ contains a subset
    which spans $V$ over $Q$.

  {\rm\rm(b)} Assume that {\rm(a)} holds. If in addition $M$ is free
    over $A$ then any $A$-basis of $M$ must be also a $Q$-basis of $V$
    (so $M$ is a lattice in $V$).
\end{lem}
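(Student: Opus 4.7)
The plan is to use the injectivity statement from \ref{ss:Bourbaki} as a black box; given that, both parts reduce to routine tensor-product bookkeeping.

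For part (a), let $\iota \colon Q \otimes_A M \to V$ denote the natural map, which is injective by \ref{ss:Bourbaki}. Thus $\iota$ is an isomorphism if and only if it is surjective. The image of $\iota$ is precisely the $Q$-linear span in $V$ of (the image of) $M$, since every simple tensor $q \otimes m$ with $q = a/b \in Q$, $m \in M$ maps to $q \cdot m \in V$, and conversely every $Q$-linear combination of elements of $M$ is such an image. Therefore $\iota$ is surjective if and only if $V = Q \cdot M$, equivalently $M$ contains a subset that $Q$-spans $V$. This gives (a).

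For part (b), assume $\iota$ is an isomorphism and let $\{m_j\}_{j \in J}$ be an $A$-basis of $M$. Since $M$ is free on $\{m_j\}$ and tensor products preserve direct sums, the set $\{1 \otimes m_j\}_{j \in J}$ is a $Q$-basis of $Q \otimes_A M$. Applying the isomorphism $\iota$ transports this to a $Q$-basis of $V$; explicitly, $\iota(1 \otimes m_j) = m_j$, so $\{m_j\}_{j \in J}$ is a $Q$-basis of $V$, as claimed.

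There is no substantive obstacle: the nontrivial content (injectivity of $\iota$) is already supplied by the Bourbaki reference quoted in \ref{ss:Bourbaki}, and what remains is only the identification of the image of $\iota$ with the $Q$-span of $M$ in (a) and the compatibility of free-module bases with base change in (b).
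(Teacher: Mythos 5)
Your proof is correct. Part (a) is essentially identical to the paper's: both invoke the Bourbaki injectivity from \ref{ss:Bourbaki} and observe that the image of the natural map is the $Q$-span of $M$. For part (b) you take a genuinely different, more functorial route: you note that base change preserves freeness, so $\{1\otimes m_j\}$ is a $Q$-basis of $Q\otimes_A M$, and then transport it through the isomorphism $\iota$. The paper instead works entirely inside $V$: it shows an $A$-basis $\mathcal{B}$ of $M$ spans $V$ over $Q$ because it $A$-spans a $Q$-spanning subset, and proves $Q$-linear independence by hand, clearing denominators in a putative relation $\sum r_j b_j = 0$ and appealing to $A$-linear independence of $\mathcal{B}$ (which makes no use of the injectivity statement). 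Your version is shorter and leans on the isomorphism established in (a); the paper's version is more elementary and makes explicit exactly where the integral-domain hypothesis enters (the cleared denominator $P$ is nonzero). Both are complete proofs.
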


\begin{proof}
  (a) The natural map $Q \otimes_A M \to V$ is surjective if and only
  if $M$ contains a subset that spans $V$ over $Q$.

  (b) Let $\mathcal{B}$ be an $A$-basis of $M$. By assumption $M$
  contains a subset $\mathcal{C}$ which spans $V$ over $Q$. But each
  element of $\mathcal{C}$ is expressible as some $A$-linear
  combination of elements of $\mathcal{B}$, so $V$ is spanned by
  $\mathcal{B}$ over $Q$.

  It remains to show that $\mathcal{B}$ is linearly independent over
  $Q$. Suppose that 
  \[
    r_1 b_1 + r_2 b_2 + \cdots + r_n b_n = 0
  \]
  in $V$, for $b_j \in \mathcal{B}$ and $r_j \in Q$. Write $r_j =
  {a_j}/{b_j}$ for $a_j, b_j \in A$ with $b_j \ne 0$. Multiplying by
  $P = b_1 \cdots b_n$ we obtain an equation
  \[
    P\, r_1 b_1 + P\, r_2 b_2 + \cdots + P\, r_n b_n = 0
  \]
  in which the coefficients $P\, r_j \in A$. The linear independence
  of $\mathcal{B}$ over $A$ implies that $P\,r_j = 0$ for all $j$.
  Since $P \ne 0$ this forces $r_j = 0$ for all $j$, as desired.
\end{proof}

\begin{thm}\label{thm:simplicity}
  Let $\Schur = \Schur(\pi)$.  If $\lambda_0$ is a maximal element in
  $\pi$ (with respect to the partial order $\ldomeq$) then the left
  ideal $\Schur 1_{\lambda_0}$ (and also the right ideal
  $1_{\lambda_0}\Schur$) is a simple $\Schur$-module of highest weight
  $\lambda_0$. The formal character of $\Delta(\lambda_0)$ is given by
  Weyl's character formula.
\end{thm}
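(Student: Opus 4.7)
The plan is to reduce to a classical Lie-theoretic statement by specializing $v \mapsto 1$ and invoking Weyl's theorem on complete reducibility (together with Weyl's character formula) for the semisimple Lie algebra $\g$ of \ref{ss:Lie-algebra}. First I would pin down the top of $\Delta(\lambda_0):=\Schur\,1_{\lambda_0}$: since $\pi$ is saturated and $\lambda_0$ is maximal in $\pi$, if $\lambda_0+\alpha_i\in W\pi$ then the dominant $W$-representative $\nu$ of $\lambda_0+\alpha_i$ would satisfy $\nu\in\pi$ and $\nu\dom\lambda_0$, contradicting maximality. Hence $1_{\lambda_0+\alpha_i}=0$ and $E_i\,1_{\lambda_0}=0$ for every $i\in I$; combined with the triangular decomposition (Lemma \ref{lem:td}) and $\Schur^0\,1_{\lambda_0}=\Q(v)\,1_{\lambda_0}$, this gives $\Delta(\lambda_0)=\Schur^-\,1_{\lambda_0}$, so every weight $\mu$ of $\Delta(\lambda_0)$ satisfies $\mu\ldomeq\lambda_0$ and the $\lambda_0$-weight space is the one-dimensional line $\Q(v)\,1_{\lambda_0}$.

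Next, form the $\A$-lattice ${}_\A\Delta(\lambda_0) := {}_\A\Schur\cdot 1_{\lambda_0}$, which is finitely generated over $\A$ by Lemma \ref{lem:plus-minus}, and set $V := \Q \otimes_\A {}_\A\Delta(\lambda_0)$. The quantum Serre relations (Theorem \ref{thm:q-Serre}) specialize at $v=1$ to the classical Serre relations and \ref{ss:defrels}(b) specializes to its classical analogue, so $S(\pi)=\Q\otimes_\A {}_\A\Schur$ is a quotient of $U(\g)$; therefore $V$ is a finite-dimensional $\g$-module, cyclically generated by the image $\bar 1_{\lambda_0}$, a maximal vector of dominant weight $\lambda_0$. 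By Weyl's theorem $V=\bigoplus_j V(\mu_j)$ is a direct sum of simple finite-dimensional $\g$-modules; writing $\bar 1_{\lambda_0}=\sum_j v_j$ with $v_j\in V(\mu_j)$, each nonzero $v_j$ is itself a maximal vector of weight $\lambda_0$, forcing $\mu_j=\lambda_0$, and the one-dimensional $\lambda_0$-weight space of $V$ leaves a single summand. Thus $V\cong V(\lambda_0)$, the simple $\g$-module of highest weight $\lambda_0$, whose character is given by Weyl's formula.

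To transfer back, I would work over the PID $\Q[v,v^{-1}]$: set $L:=\Q[v,v^{-1}]\cdot {}_\A\Delta(\lambda_0)\subseteq \Delta(\lambda_0)$, a finitely generated torsion-free $\Q[v,v^{-1}]$-module whose weight spaces $1_\mu L$ are therefore all free. By Lemma \ref{lem:int} the rank of $1_\mu L$ equals $\dim_{\Q(v)}1_\mu\Delta(\lambda_0)$, and since $L/(v-1)L=V$ it also equals $\dim_\Q 1_\mu V$, so $\Delta(\lambda_0)$ has Weyl's character. For simplicity, let $Q:=\Delta(\lambda_0)/N$ be any nonzero quotient; the image $L'$ of $L$ in $Q$ is again finitely generated torsion-free over $\Q[v,v^{-1}]$ with free weight spaces, $\Q(v)\otimes L'=Q$, and $L'/(v-1)L'$ is a nonzero (since $L'\ne 0$ as $Q\ne 0$) $\g$-quotient of the simple module $V\cong V(\lambda_0)$, hence equal to it; so $\dim_{\Q(v)}Q=\rk L'=\dim V(\lambda_0)=\dim_{\Q(v)}\Delta(\lambda_0)$, and $N=0$. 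The right-ideal statement follows by applying the anti-involution $*$ of \ref{ss:star-omega}. The main obstacle is this final transfer step: one must verify that the $\Q[v,v^{-1}]$-lattices $L$ and $L'$ have torsion-free (hence free) weight spaces and that the specialization $L/(v-1)L$ equals $V$, closing the loop between the quantum algebra and its classical limit.
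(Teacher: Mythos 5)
Your proposal is correct and follows essentially the same route as the paper: pass to an integral form of $\Schur 1_{\lambda_0}$ over $\Q[v,v^{-1}]$ generated by the highest weight vector, specialize $v\mapsto 1$, verify that the resulting operators satisfy Serre's presentation of $\g$, invoke Weyl's complete reducibility to identify the specialization with the simple $\g$-module $V(\lambda_0)$, and transfer the weight-space ranks back via Lemma \ref{lem:int}. The only cosmetic difference is at the end: the paper runs the specialization argument in parallel for $M(\lambda_0)$ and its simple quotient $L(\lambda_0)$ and compares dimensions, whereas you specialize the image lattice of an arbitrary nonzero quotient — the same dimension-comparison idea in slightly different packaging.
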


\begin{proof}
(Similar to Sections 5.12--5.15 of \cite{Jantzen:LQG}.)  
Put $\AQ= \Q[v,v^{-1}]$. Write $M(\lambda_0) = \Schur 1_{\lambda_)}$,
which is a highest weight module of highest weight $\lambda_0$. Let
$L(\lambda_0)$ be its simple quotient.  Clearly $L(\lambda_0)$ is
generated by a highest weight vector of weight $\lambda_0$.
Throughout the following argument, we let $V$ be either $M(\lambda_0)$
or $L(\lambda_0)$. Then $V = \Schur^- x_0$ where $x_0$ is a maximal
vector in $V$, so $V$ is the $\Q(v)$-linear span of elements of the
form $F_{B} x_0$ for various sequences ${B} = (i_1, \dots, i_r)$ in
$I^*$. Write $\wt({B}) = \sum_j \alpha_{i_j}$.  Let ${}_\AQ V$
be the ${}_\AQ \Schur$-submodule of $V$ generated by the maximal
vector $x_0$.  Then
\[
  {}_\AQ V = \sum_{{B}} \AQ F_{{B}} x_0 \quad\text{and}
  \quad {}_\AQ V_{\mu} = \sum_{\wt({B}) = \lambda_0 - \mu} \AQ
  F_{{B}} x_0
\]
for any $\mu \in W\pi$.  As $\AQ$-modules, both ${}_\AQ V$ and ${}_\AQ
V_{\mu}$ are finitely generated and torsion-free. Hence both ${}_\AQ V$
and ${}_\AQ V_{\mu}$ are free of finite rank over $\AQ$. Clearly ${}_\AQ
V = \sum_\mu {}_\AQ V_{\mu}$ so we get that ${}_\AQ V = \bigoplus_{\mu
  \in W\pi} {}_\AQ V_{\mu}$.  The natural map $\Q(v) \otimes_\AQ ({}_\AQ
V_{\mu}) \to V_\mu$ is surjective, for any $\mu \in W\pi$, since
$V_\mu$ is spanned by all $F_{{B}} x_0$ with $\wt({B}) =
\lambda_0 - \mu$. (Here we are writing $V_\mu$ for the weight space
$1_\mu V$ in $V$.) By Lemma \ref{lem:int} it is injective as well,
hence an isomorphism.  It follows that a basis for ${}_\AQ V_{\mu}$
over $\AQ$ is also a basis of $V_\mu$ over $\Q(v)$, and thus
\[
   \rk({}_\AQ V_\mu) = \dim_{\Q(v)} V_\mu \quad (\text{any } \mu \in
   W\pi).
\] 

We claim that the $\AQ$-module ${}_\AQ V$ is stable under the action of
the $E_j$, $F_j$, and $1_\mu$ for any $j \in I$ and any $\mu \in
W\pi$. This is obvious in the case of the $F_j$ and $1_\mu$, since
\[
F_j (F_{B} x_0) \in V_A \quad\text{and} \quad 1_\mu
(F_{B} x_0) = F_{B} 1_{\mu+\wt({B})} x_0
\]
is zero if $\mu+\wt({B}) \ne \lambda_0$ and is $F_{B} x_0$
if $\mu+\wt({B}) = \lambda_0$. Moreover, for the $E_j$ we
have by the defining relations \ref{ss:defrels}(b), (c) that
\[
\begin{aligned}
  E_j F_{B} &= E_j (F_{i_r} \cdots F_{i_1} x_0)\\ &= \sum_{1\le a \le
    r;\, i_a = j} F_{i_r} \cdots F_{i_{a+1}} \sum_{\mu \in W\pi}
  [\bil{\alpha_j^\vee}{\mu}]_j 1_\mu F_{i_{a-1}} \cdots F_{i_1} x_0
\end{aligned}
\]
and the claim follows by the preceding remarks and the fact that
$[\bil{\alpha_j^\vee}{\mu}]_j \in \AQ$ for any $\mu\in W\pi$, $j \in
I$. 

Now there is a unique homomorphism $\varphi$ of $\Q$-algebras mapping
$\AQ = \Q[v,v^{-1}]$ to $\C$ that sends $v$ and $v^{-1}$ to $1$.
Regard $\C$ as an $\AQ$-module via $\varphi$, and put
\[
  \overline{V} = \C \otimes_\AQ ({}_\AQ V) \quad \text{and}\quad
  \overline{V}_{\mu} = \C \otimes_\AQ ({}_\AQ V_{\mu})
\]
for any $\mu \in W\pi$. Then we have the direct sum decomposition
$\overline{V} = \bigoplus_\mu \overline{V}_\mu$, where each
$\overline{V}_\mu$ is a complex vector space with
\[
   \dim_\C \overline{V}_\mu = \rk ({}_\AQ V_{\mu}) = \dim_{\Q(v)}
   V_\mu.
\]
The actions of $E_i$, $F_i$, and $1_\mu$ on $V_A$ yield linear
endomorphisms of $\overline{V}$ that we denote by $e_i$, $f_i$, and
$\iota_\mu$. We put
\[
  \overline{h}_i = \textstyle \sum_{\mu \in W\pi}
  \bil{\alpha_i^\vee}{\mu} \iota_\mu
\]
for any $i \in I$. 

We claim that the endomorphisms $e_i, f_i, \overline{h}_i$ satisfy
Serre's presentation for the finite dimensional semisimple Lie algebra
$\g$ (see \ref{ss:Lie-algebra}) defined by the Cartan datum. Since the
idempotent linear operators $\iota_{\lambda_0}$ commute and are pairwise
orthogonal, it follows that $\overline{h}_i$ commutes with
$\overline{h}_j$; thus
\[
   [\overline{h}_i, \overline{h}_j] = 0, \quad \text{any } i, i \in I.
\]
We have $\varphi( [a]_i ) = [a]_{v=1} = a$ for any integer $a$ and any
$i \in I$, so from defining relations \ref{ss:defrels}(b) for the
algebra $\Schur = \Schur(\pi)$ we have
\[
   [e_i, f_j] = \delta_{i,j} \textstyle \sum_{\mu \in W\pi}
   \bil{\alpha_i^\vee}{\mu} \iota_\mu = \overline{h}_i .
\]
Recalling the convention that $1_\mu = 0$ for any $\mu \notin W\pi$ we
put also $\iota_\mu = 0$ for any $\mu \notin W\pi$. Then we can write
$\overline{h}_i = \sum_{\mu\in \X} \bil{\alpha_i^\vee}{\mu} \iota_\mu$
(which is still a finite sum) and by defining relation
\ref{ss:defrels}(c) we have
\begin{align*}
  [\overline{h}_i, e_j] &= \overline{h}_i e_j - e_j \overline{h}_i
  = \textstyle\sum_{\mu \in \X} \bil{\alpha_i^\vee}{\mu} \iota_\mu e_j -
  \sum_{\mu \in \X} \bil{\alpha_i^\vee}{\mu} e_j \iota_\mu \\
  &= \textstyle\sum_{\mu \in \X} \bil{\alpha_i^\vee}{\mu} \iota_\mu e_j -
  \sum_{\mu \in \X} \bil{\alpha_i^\vee}{\mu}  \iota_{\mu+\alpha_j} e_j
\end{align*}
and by replacing $\mu$ by $\mu-\alpha_j$ in the second sum we obtain
\begin{align*}
  [\overline{h}_i, e_j] &= \textstyle\sum_{\mu \in \X}
  \bil{\alpha_i^\vee}{\mu} \iota_\mu e_j - \sum_{\mu \in \X}
  \bil{\alpha_i^\vee}{\mu-\alpha_j} \iota_{\mu} e_j\\
  &= \textstyle\sum_{\mu \in \X} \bil{\alpha_i^\vee}{\alpha_j} \iota_\mu e_j\\
  &= \bil{\alpha_i^\vee}{\alpha_j} e_j
\end{align*}
where we have used the second part of relation \ref{ss:defrels}(a)
to get the last line. A similar calculation proves that
\[
  [\overline{h}_i, f_j] = -\bil{\alpha_i^\vee}{\alpha_j} f_j.
\]
Finally, we have 
\[ \textstyle
\varphi\big(\sqbinom{a}{n}_i\big) = \sqbinom{a}{n}_{v=1} = \binom{a}{n}
\] 
for any integers $a, n$ with $n \ge 0$. Thus the quantum Serre
relations in Theorem \ref{thm:q-Serre} imply that
\begin{align*}
\sum_{s=0}^{1-a_{ij}} (-1)^s \tbinom{1-a_{ij}}{s}
e_i^{1-a_{ij}-s}e_j e_i^s = 0 \quad (i\ne j);\\
\sum_{s=0}^{1-a_{ij}} (-1)^s \tbinom{1-a_{ij}}{s}
f_i^{1-a_{ij}-s}f_j f_i^s = 0 \quad (i\ne j)
\end{align*}
where $a_{ij} = \bil{\alpha_i^\vee}{\alpha_j}$. Thus the claim is
proved.

Now let $x_i, y_i, h_i$ ($i \in I$) be a Chevalley system of
generators for the Lie algebra $\g$.  Then by the claim of
the preceding paragraph it follows that the map $\g \to
\mathfrak{gl}(\overline{V})$ given by $x_i \to e_i$, $y_i \to f_i$,
$h_i \to \overline{h}_i$ is a homomorphism of Lie algebras, so
$\overline{V}$ is a $\mathfrak{g}$-module.

All of the preceding discussion applies equally well to $V =
L(\lambda_0)$ or $V = M(\lambda_0) = \Schur 1_{\lambda_0}$. In either
case we now see easily that $\overline{V}$ is a simple
$\g$-module of highest weight $\lambda_0$. This follows from
Weyl's theorem on complete reducibility of finite dimensional
representations of semisimple Lie algebras, which implies that
$\overline{V}$ is completely reducible, and the observation that
$\overline{V}$ is (in both cases under consideration) generated by
a maximal vector, and hence has a unique simple quotient.

It follows that the weight space dimensions in $\overline{V}$ are
given by Weyl's character formula, in both cases $V = L(\lambda_0)$ and
$V = M(\lambda_0)$. In particular, this shows that
\[
  \dim_{\Q(v)} L(\lambda_0) = \dim_{\Q(v)} M(\lambda_0). 
\]
Since $L(\lambda_0)$ is a homomorphic image of $M(\lambda_0)$, it
follows that $L(\lambda_0) = M(\lambda_0)$ and we have obtained the
result.
\end{proof}

\begin{cor}
Let $\Schur = \Schur(\pi)$. 

(a) For any $\lambda \in \pi$, the modules $\Delta(\lambda)$ and
$\Delta(\lambda)^*$ are simple as left and right $\Schur$-modules.
The formal character of $\Delta(\lambda)$ is given by Weyl's character
formula, for each $\lambda \in \pi$.

(b) If $\lambda_0$ is any maximal element of $\pi$ then $\Schur
1_{\lambda_0} \cong \Delta(\lambda_0)$ and $1_{\lambda_0}\Schur \cong
\Delta(\lambda_0)^*$.
\end{cor}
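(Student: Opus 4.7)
\medskip\noindent\emph{Proof proposal.} The plan is to reduce both parts of the corollary to Theorem \ref{thm:simplicity} by exploiting the surjective quotient maps $p_{\pi,\pi'} \colon \Schur(\pi) \to \Schur(\pi')$ from \ref{ss:quotient-morphisms}, applied to appropriate saturated subsets $\pi' \subseteq \pi$. Theorem \ref{thm:simplicity} has already done the hard work (via the Chevalley lift to the semisimple Lie algebra $\g$ and Weyl's theorem) by showing that whenever $\lambda_0$ is maximal in $\pi$, the left ideal $\Schur(\pi) 1_{\lambda_0}$ is simple of highest weight $\lambda_0$ with Weyl character.

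For part (b), fix $\lambda_0$ maximal in $\pi$. Since $\pi$ is saturated and $\lambda_0 \in \pi$, we have $\pi_{\lambda_0} \subseteq \pi$, and the quotient map $p_{\pi, \pi_{\lambda_0}}$ is defined. Its restriction to the left ideal $\Schur(\pi) 1_{\lambda_0}$ yields a surjection of $\Schur(\pi)$-modules onto $\Schur(\pi_{\lambda_0}) 1_{\lambda_0} = \Delta(\lambda_0)$, and this surjection is nonzero because $p_{\pi,\pi_{\lambda_0}}(1_{\lambda_0}) = 1_{\lambda_0} \ne 0$. By Theorem \ref{thm:simplicity} the source is simple, hence the surjection is an isomorphism, giving $\Schur(\pi) 1_{\lambda_0} \cong \Delta(\lambda_0)$. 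The isomorphism $1_{\lambda_0}\Schur \cong \Delta(\lambda_0)^*$ follows by the parallel right-handed version of the same argument, or by applying the anti-involution $*$ of \ref{ss:star-omega}.

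For part (a), take arbitrary $\lambda \in \pi$. Then $\lambda$ is the unique maximal element of the saturated set $\pi_\lambda$, so Theorem \ref{thm:simplicity}, applied to the algebra $\Schur(\pi_\lambda)$ in place of $\Schur(\pi)$, tells us that $\Delta(\lambda) = \Schur(\pi_\lambda) 1_\lambda$ is simple as a $\Schur(\pi_\lambda)$-module with character given by Weyl's formula. Since the $\Schur(\pi)$-module structure on $\Delta(\lambda)$ is obtained by pulling back through the surjective map $p_{\pi, \pi_\lambda}$, every $\Schur(\pi)$-submodule of $\Delta(\lambda)$ is automatically a $\Schur(\pi_\lambda)$-submodule, so $\Delta(\lambda)$ is simple as a $\Schur(\pi)$-module as well; the weight space decomposition is preserved under pullback, so the character formula transfers. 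The simplicity of $\Delta(\lambda)^*$ as a right module is handled identically. There is no serious obstacle — the only thing to check is the bookkeeping that $p_{\pi,\pi'}$ sends $1_{\lambda_0} \mapsto 1_{\lambda_0}$ and that simplicity descends along a surjective algebra homomorphism, both of which are immediate.
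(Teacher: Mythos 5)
Your argument is correct and follows essentially the same route as the paper: both parts reduce to Theorem \ref{thm:simplicity} applied to the saturated set $\pi_\lambda$ (for (a), with simplicity descending along the surjection $p_{\pi,\pi_\lambda}$) and to $\pi$ itself (for (b)). The only cosmetic difference is in (b), where the paper invokes the earlier uniqueness statement for simple highest weight modules of a given highest weight, while you realize the isomorphism explicitly as the restriction of $p_{\pi,\pi_{\lambda_0}}$ to the simple left ideal $\Schur(\pi)1_{\lambda_0}$; both are valid one-line conclusions.
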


\begin{proof}
(a) By the theorem with $\pi$ replaced by $\pi_\lambda = \{\mu\in \X^+
  \colon \mu \ldomeq \lambda \}$, we know that $\Delta(\lambda)$ and
  $\Delta(\lambda)^*$ are simple as left and right
  $\Schur(\pi_\lambda)$-modules.  They may be regarded as
  $\Schur(\pi)$-modules via the natural quotient map $\Schur(\pi) \to
  \Schur(\pi_\lambda)$ defined in \ref{ss:quotient-morphisms}.  Since
  any $\Schur(\pi)$-submodule must be also an
  $\Schur(\pi_\lambda)$-submodule, it follows that they are simple
  when regarded as $\Schur(\pi)$-modules.

(b) The first isomorphism is clear because $\Delta(\lambda)$ and
  $\Schur 1_{\lambda_0}$ are both simple highest weight modules of
  highest weight $\lambda_0$. The second follows from the first, or
  repeat the argument.
\end{proof}

Now that we know the $\Delta(\lambda)$ are simple modules for all
$\lambda \in \pi$, we are ready to prove semisimplicity of $\Schur =
\Schur(\pi)$.

\begin{prop}\label{prop:ss}
  The algebra $\Schur = \Schur(\pi)$ is semisimple, and a complete set
  of isomorphism classes of simple $\Schur$-modules is given by $\{
  \Delta(\lambda) \colon \lambda \in \pi \}$. 
\end{prop}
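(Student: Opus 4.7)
The plan is to mimic the rank $1$ dimension-counting argument from Proposition~\ref{prop:r1ss}, using as input the fact (now available in higher rank) that each $\Delta(\lambda)$ is simple and that its dimension is given by Weyl's character formula. Since every simple $\Schur$-module has highest weight in $\pi$ and is a quotient of the corresponding $\Delta(\lambda)$ (by the proposition just before \ref{ss:Bourbaki}) and each $\Delta(\lambda)$ is already simple (by the corollary following Theorem \ref{thm:simplicity}), the list $\{\Delta(\lambda):\lambda \in \pi\}$ exhausts the isomorphism classes. The only thing left to prove is semisimplicity, for which it suffices to establish the equality
\[
  \dim_{\Q(v)} \Schur(\pi) = \sum_{\lambda \in \pi} (\dim_{\Q(v)} \Delta(\lambda))^2.
\]

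I would prove this equality by induction on $|\pi|$. For the base case $|\pi|=1$, say $\pi = \{\lambda_0\}$, Lemma~\ref{lem:idem-straightening} shows that all idempotents $1_{w(\lambda_0)}$ lie in $\Schur 1_{\lambda_0} \Schur$, so $1 = \sum_{w \in W} 1_{w(\lambda_0)} \in \Schur 1_{\lambda_0} \Schur$ and thus $\Schur = \Schur 1_{\lambda_0} \Schur$. By the surjection in Corollary \ref{cor:kernel}(b),
\[
\dim_{\Q(v)} \Schur \le (\dim_{\Q(v)} \Schur 1_{\lambda_0}) \cdot (\dim_{\Q(v)} 1_{\lambda_0}\Schur) = (\dim_{\Q(v)} \Delta(\lambda_0))^2,
\]
using the identifications in the corollary just before and the fact that $\Delta(\lambda_0)$ and $\Delta(\lambda_0)^*$ have the same dimension (both are given by Weyl's character formula for the dominant weight $\lambda_0$).

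For the inductive step, choose a maximal $\lambda_0 \in \pi$ and put $\pi' = \pi \setminus \{\lambda_0\}$, which is again saturated. By Corollary~\ref{cor:kernel}(a) the kernel of $p_{\pi,\pi'}$ equals $\Schur 1_{\lambda_0}\Schur$, and by (b) this kernel has dimension at most $(\dim \Delta(\lambda_0))^2$ exactly as above. Combining this with the inductive hypothesis applied to $\pi'$ yields
\[
\dim_{\Q(v)} \Schur(\pi) \le (\dim_{\Q(v)} \Delta(\lambda_0))^2 + \sum_{\lambda \in \pi'} (\dim_{\Q(v)} \Delta(\lambda))^2 = \sum_{\lambda \in \pi} (\dim_{\Q(v)} \Delta(\lambda))^2.
\]
For the reverse inequality I would invoke standard Wedderburn theory: the modules $\Delta(\lambda)$ ($\lambda \in \pi$) are pairwise non-isomorphic simples because their highest weights differ, and each has endomorphism ring equal to $\Q(v)$ (an endomorphism is determined by its action on the one-dimensional highest-weight space, which is a scalar since that space is stable and generates the module). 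Thus $\Schur/\operatorname{rad}\Schur$ surjects onto $\prod_{\lambda \in \pi} \operatorname{End}_{\Q(v)}(\Delta(\lambda))$, giving $\dim \Schur \ge \sum_\lambda (\dim \Delta(\lambda))^2$.

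The two inequalities force equality, which simultaneously shows $\operatorname{rad}\Schur = 0$ (so $\Schur$ is semisimple) and shows that the $\Delta(\lambda)$ form a complete set. The only real subtlety is the endomorphism-ring computation needed for the lower bound, and confirming that the surjection in Corollary~\ref{cor:kernel}(b) suffices even without knowing it is an isomorphism; both are straightforward.
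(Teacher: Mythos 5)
Your proposal is correct and follows essentially the same route as the paper: induction on $|\pi|$, bounding $\dim_{\Q(v)}\Schur(\pi)$ from above via Corollary~\ref{cor:kernel} and from below via Wedderburn theory for the pairwise non-isomorphic simples $\Delta(\lambda)$. The only difference is that you spell out two details the paper leaves to ``standard theory'' --- the verification that $\Schur = \Schur 1_{\lambda_0}\Schur$ in the base case and the computation $\End_{\Schur}(\Delta(\lambda)) = \Q(v)$, the latter genuinely needed for the lower bound since $\Q(v)$ is not algebraically closed --- and both are handled correctly.
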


\begin{proof} 
Similar to the proof of Proposition \ref{prop:r1ss}.  We proceed by
induction on the cardinality of $\pi$. If $\pi= \{ \lambda_0 \}$ is a
singleton then $\Schur = \Schur 1_{\lambda_0} \Schur$ and by Corollary
\ref{cor:kernel}(b) it follows that
\[
  \dim \Schur \le (\dim \Delta(\lambda_0))^2.
\]
On the other hand,the standard theory of
finite dimensional algebras implies that
\[
  \dim \Schur \ge (\dim \Delta(\lambda_0))^2.
\]
This proves equality of dimensions, and the claims follow in case
$|\pi|=1$.

Assume now that $|\pi| \ge 2$. Let $\lambda_0$ be a maximal element of
$\pi$, and put $\pi' = \pi - \{\lambda_0\}$. By induction the algebra
$\Schur(\pi')$ is semisimple and $\dim_{\Q(v)} \Schur(\pi') =
\sum_{\lambda \in W\pi'} (\dim_{\Q(v)} \Delta(\lambda))^2$. By
Corollary \ref{cor:kernel}(a) the kernel of the natural quotient map
$p_{\pi, \pi'}$ is equal to $\Schur 1_{\lambda_0} \Schur$. By
Corollary \ref{cor:kernel}(b) again we have $\dim_{\Q(v)} \ker
p_{\pi,\pi'} \le (\dim_{\Q(v)} \Delta(\lambda_0))^2$, so
\begin{align*}
  \dim_{\Q(v)} \Schur &= \dim_{\Q(v)} \ker p_{\pi,\pi'} +
  \textstyle \sum_{\lambda \in W\pi'} (\dim_{\Q(v)} \Delta(\lambda))^2\\
  & \le \textstyle \sum_{\lambda \in W\pi} (\dim_{\Q(v)} \Delta(\lambda))^2.
\end{align*}
On the other hand, since the $\Delta(\lambda)$ are pairwise
non-isomorphic simple modules (indeed, no two of them has the same
highest weight), the standard theory of finite dimensional algebras
implies that
\[
  \textstyle \dim_{\Q(v)} \Schur \ge \sum_{\lambda \in \pi} (\dim_{\Q(v)}
  \Delta(\lambda))^2.
\]
Hence the dimensions agree, and the claims follow.
\end{proof}

\begin{cor}\label{cor:mult-iso}
  Let $\Schur=\Schur(\pi)$ and let $\lambda_0$ be maximal in
  $\pi$. The natural multiplication map $\Delta(\lambda_0) \otimes
  \Delta(\lambda_0)^* \to \Schur 1_{\lambda_0} \Schur$ considered in
  Corollary \ref{cor:kernel}(b) is actually an isomorphism of
  $\Schur$-bimodules.
\end{cor}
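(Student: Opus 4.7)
The plan is to upgrade the surjection of Corollary \ref{cor:kernel}(b) to an isomorphism by a dimension count over $\Q(v)$. The multiplication map $x \otimes y \mapsto xy$ is automatically an $\Schur$-bimodule homomorphism (the left action hits the left tensor factor, the right action hits the right tensor factor, and associativity ensures both actions pass through the product), and Corollary \ref{cor:kernel}(b) already supplies surjectivity, so the only thing left to verify is that the two sides have the same $\Q(v)$-dimension.

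For the left-hand side, $\dim_{\Q(v)}\bigl(\Delta(\lambda_0) \otimes \Delta(\lambda_0)^*\bigr) = (\dim \Delta(\lambda_0))^2$; here I use $\dim \Delta(\lambda_0) = \dim \Delta(\lambda_0)^*$, which follows by applying the anti-involution $*$ of \ref{ss:star-omega} inside $\Schur(\pi_{\lambda_0})$ to carry $1_{\lambda_0}\Schur(\pi_{\lambda_0})$ $\Q(v)$-linearly onto $\Schur(\pi_{\lambda_0})1_{\lambda_0}$.

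For the right-hand side, I would invoke the semisimplicity of $\Schur$ just established in Proposition \ref{prop:ss} and identify $\Schur 1_{\lambda_0} \Schur$ as the $\Delta(\lambda_0)$-isotypic component of the left regular representation. Indeed, each cyclic left submodule $\Schur 1_{\lambda_0} s$ (for $s \in \Schur$) is a quotient of the simple module $\Schur 1_{\lambda_0} \cong \Delta(\lambda_0)$ via $x \mapsto xs$, hence either zero or isomorphic to $\Delta(\lambda_0)$. Artin--Wedderburn then gives the isotypic component dimension as $(\dim \Delta(\lambda_0))^2 \cdot \dim_{\Q(v)} \End_\Schur(\Delta(\lambda_0))$. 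The endomorphism ring collapses to $\Q(v)$ by the standard highest-weight argument: any $\Schur$-endomorphism preserves weight spaces, acts as a scalar on the one-dimensional highest weight space (Lemma \ref{lem:maxvec}(c)), and is determined globally by that scalar since $\Delta(\lambda_0)$ is cyclically generated by its highest weight vector.

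Matching $(\dim \Delta(\lambda_0))^2$ on both sides forces the surjection to be an isomorphism. There is no genuine obstacle: the statement is essentially bookkeeping, made routine by the semisimplicity theorem and the one-dimensionality of the top weight space — everything one needs is already in place by this point in the paper.
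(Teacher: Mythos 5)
Your overall strategy is sound and genuinely different from the paper's: the paper gets $\dim_{\Q(v)} \Schur 1_{\lambda_0}\Schur = (\dim_{\Q(v)}\Delta(\lambda_0))^2$ by identifying this ideal with $\ker p_{\pi,\pi'}$ (Corollary \ref{cor:kernel}(a)) and subtracting the dimension formulas for $\Schur(\pi)$ and $\Schur(\pi')$ established in the proof of Proposition \ref{prop:ss}, whereas you read the same number off the Wedderburn decomposition of the semisimple algebra $\Schur$. Both routes rest on Proposition \ref{prop:ss}, and yours has the mild advantage of not invoking the quotient map at all.

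However, as written your identification of $\Schur 1_{\lambda_0}\Schur$ with the isotypic component is only half proved, and it is the unproved half that carries the weight. Your ``indeed'' shows that every $\Schur 1_{\lambda_0}s$ is zero or a copy of $\Delta(\lambda_0)$, i.e.\ that $\Schur 1_{\lambda_0}\Schur$ is \emph{contained in} the isotypic component; that yields only the upper bound $\dim \Schur 1_{\lambda_0}\Schur \le (\dim\Delta(\lambda_0))^2$, which you already have for free from the surjectivity of the multiplication map. What forces the surjection to be injective is the \emph{lower} bound, and for that you need the reverse inclusion: every submodule $M\subseteq\Schur$ isomorphic to $\Delta(\lambda_0)$ satisfies $1_{\lambda_0}M\ne 0$, so by simplicity $M=\Schur m$ for some $0\ne m\in 1_{\lambda_0}\Schur$, whence $M\subseteq\Schur 1_{\lambda_0}\Schur$; together with the exact Wedderburn count of the isotypic component this gives $\dim\Schur 1_{\lambda_0}\Schur\ge(\dim\Delta(\lambda_0))^2$, which is what you need. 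One more small point: the dimension of the isotypic component is $(\dim\Delta(\lambda_0))^2/\dim_{\Q(v)}\End_\Schur(\Delta(\lambda_0))$, not times it --- harmless here since you correctly show the endomorphism ring collapses to $\Q(v)$, but the formula as written would fail for a non-split simple module.
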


\begin{proof}
In the situation considered in the proposition, we have $\ker
p_{\pi,\pi'} = \Schur 1_{\lambda_0} \Schur$. It follows from the
preceding result that the kernel of $p_{\pi,\pi'}$ must have dimension
$(\dim_{\Q(v)} \Delta(\lambda_0))^2$. Since the multiplication map
$
  \Delta(\lambda_0) \otimes_{\Q(v)} \Delta(\lambda_0)^* \to \Schur
  1_{\lambda_0} \Schur
$
in Corollary \ref{cor:kernel}(b) is surjective, the result follows by a
dimension comparison.
\end{proof}

\section{Constructing cellular bases of $\Schur(\pi)$}%
\label{sec:cellular}\noindent
We now show how fixing an arbitrary basis of the simple module
$\Delta(\lambda)$, for each $\lambda \in \pi$, leads to a cellular
basis of $\Schur(\pi)$.

\subsection{Cosaturated sets}\label{ss:cosat}
Let $\pi$ be a saturated subset of $\X^+$. A subset $\Phi$ of $\pi$ is
\emph{cosaturated} if $\Phi$ can be written in the form $\pi - \pi'$,
where $\pi'$ is a saturated subset of $\pi$. Equivalently, $\Phi$ is
cosaturated if and only if it is successor-closed with respect to
$\domeq$; i.e., if $\lambda \in \Phi$ and $\mu \domeq \lambda$ where
$\mu \in \pi$ then $\mu \in \Phi$.

Examples of cosaturated subsets of $\pi$ are the subsets $\pi[\dom
  \lambda] = \{ \mu \in \pi: \mu \dom \lambda\}$ and $\pi[\domeq
  \lambda] = \{ \mu \in \pi: \mu \domeq \lambda\}$.

Given any cosaturated subset $\Phi$ of $\pi$ let $\Schur[\Phi] =
\sum_{\mu \in \Phi} \Schur 1_\mu \Schur$ be the ideal generated by all
idempotents $1_\mu$ such that $\mu \in \Phi$.  In particular, if
$\Phi = \pi[\dom \lambda]$ or $\pi[\domeq \lambda]$ then we write
$\Schur[\dom \lambda]$ or $\Schur[\domeq \lambda]$ (respectively) for
the ideal $\Schur[\Phi]$.

\begin{lem}\label{lem:rat-filt} 
  Let $\Phi$ be any cosaturated subset of $\pi$. Write $\Phi = \pi -
  \pi'$ where $\pi'$ is a saturated subset of $\pi$. Then
  $\Schur[\Phi]$ is equal to the kernel of the natural homomorphism
  $p_{\pi, \pi'}$.
\end{lem}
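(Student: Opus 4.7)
The plan is to prove the two inclusions $\Schur[\Phi] \subseteq \ker p_{\pi,\pi'}$ and $\ker p_{\pi,\pi'} \subseteq \Schur[\Phi]$ separately; the first is immediate from the definition, while the second is an induction on $|\Phi|$ that makes essential use of Corollary \ref{cor:kernel}.

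For the easy inclusion: if $\mu \in \Phi = \pi - \pi'$ then $\mu$ is dominant and $\mu \notin \pi'$, so $\mu \notin W\pi'$ (since $W\pi' \cap \X^+ = \pi'$). Hence $p_{\pi,\pi'}(1_\mu) = 0$, and the two-sided ideal $\Schur 1_\mu \Schur$ is contained in $\ker p_{\pi,\pi'}$. Summing over $\mu \in \Phi$ gives $\Schur[\Phi] \subseteq \ker p_{\pi,\pi'}$.

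For the reverse inclusion I would induct on $|\Phi|$. The base $|\Phi|=0$ is trivial. For the inductive step, the key observation is that because $\Phi$ is cosaturated in $\pi$, an element $\lambda_1$ that is maximal in $\Phi$ with respect to $\domeq$ is automatically maximal in $\pi$: any $\mu \in \pi$ with $\mu \dom \lambda_1$ would lie in $\Phi$ by cosaturation, contradicting maximality in $\Phi$. Pick such a $\lambda_1$, set $\pi_1 = \pi - \{\lambda_1\}$ (saturated, since $\lambda_1$ is maximal in $\pi$), and observe $\pi' \subseteq \pi_1$ with $\pi_1 - \pi' = \Phi - \{\lambda_1\}$, which is cosaturated in $\pi_1$. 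Factor $p_{\pi,\pi'} = p_{\pi_1,\pi'} \circ p_{\pi,\pi_1}$.

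Now Corollary \ref{cor:kernel}(a) gives $\ker p_{\pi,\pi_1} = \Schur\, 1_{\lambda_1}\, \Schur$, and the induction hypothesis, applied in $\Schur(\pi_1)$, gives $\ker p_{\pi_1,\pi'} = \sum_{\mu \in \Phi - \{\lambda_1\}} \Schur(\pi_1)\, 1_\mu\, \Schur(\pi_1)$. Since $p_{\pi,\pi_1}$ is surjective and sends each $1_\mu \in \Schur(\pi)$ with $\mu \in \pi_1$ to $1_\mu \in \Schur(\pi_1)$, the preimage under $p_{\pi,\pi_1}$ of the ideal $\sum_{\mu \in \Phi - \{\lambda_1\}} \Schur(\pi_1)\, 1_\mu\, \Schur(\pi_1)$ equals
\[
\ker p_{\pi,\pi_1} + \sum_{\mu \in \Phi - \{\lambda_1\}} \Schur\, 1_\mu\, \Schur
= \Schur\, 1_{\lambda_1}\, \Schur + \sum_{\mu \in \Phi - \{\lambda_1\}} \Schur\, 1_\mu\, \Schur = \Schur[\Phi],
\]
which is exactly $\ker p_{\pi,\pi'}$. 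The only slightly delicate point, which I would flag as the main bookkeeping obstacle, is verifying the preimage identity above; but this is a standard consequence of surjectivity, together with the fact that the idempotent generators of the target ideal have canonical lifts to the source. An alternative route worth mentioning would use the semisimplicity of $\Schur$ established in Proposition \ref{prop:ss} together with Weyl's character formula (Theorem \ref{thm:simplicity}), identifying both $\Schur[\Phi]$ and $\ker p_{\pi,\pi'}$ as the direct sum of isotypic components $\bigoplus_{\lambda \in \Phi} I_\lambda$; cosaturation of $\Phi$ plays the analogous role there, via the fact that the weights of $\Delta(\lambda)$ include precisely those dominant $\mu$ with $\mu \ldomeq \lambda$.
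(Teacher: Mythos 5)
Your proof is correct and follows essentially the same strategy as the paper's: induction on $|\Phi|$, factoring $p_{\pi,\pi'}$ through an intermediate quotient, and invoking Corollary \ref{cor:kernel}(a) for the single-weight step. The only (cosmetic) difference is that you strip a maximal element of $\Phi$, so the inductive hypothesis is applied in the smaller algebra $\Schur(\pi_1)$ and you need the standard preimage identity for ideals under a surjection, whereas the paper strips a minimal element of $\Phi$, keeps the ambient algebra $\Schur(\pi)$, and uses the equivalent fact about kernels of composed surjections.
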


\begin{proof}
We proceed by induction on the cardinality of $\Phi$. If $\Phi =
\{\lambda_0\}$ then $\lambda_0$ is necessarily maximal in $\pi$, and
the result is clear from Corollary \ref{cor:kernel}(a). Assume that
$|\Phi| \ge 2$.  Then we can write $\Phi = \Phi' \cup \{\lambda\}$
where $\lambda$ is minimal in $\Phi$ (there does not exist any $\mu
\in \Phi$ such that $\mu \dom \lambda$) and $\Phi' = \Phi -
\{\lambda\}$. Then $\Phi'$ is also cosaturated in $\pi$ and its
complement $\pi' \cup \{\lambda\}$ is saturated. The diagram of
projections
\[
\begin{gathered}
\xymatrix{ \Schur(\pi) \ar[dr]_{p_{\pi,\pi'\cup \{\lambda\}}}
  \ar[rr]^{p_{\pi,\pi'}} && \Schur(\pi') \\ &
    \Schur(\pi' \cup \{\lambda\}) \ar[ur]_{p_{\pi'\cup \{\lambda\},\pi'}}
}
\end{gathered}
\]
commutes. By induction, the kernel of $p_{\pi,\pi'\cup \{\lambda\}}$
is the ideal $\Schur[ \Phi' ]$ generated by all $1_\mu$ with $\mu \in
\Phi'$. By Corollary \ref{cor:kernel}(a) again the kernel of $p_{\pi'\cup
  \{\lambda\},\pi'}$ is the ideal of $\Schur(\pi' \cup \{\lambda\})$
generated by $1_\lambda$. It follows that the kernel of $p_{\pi,\pi'}$
is generated by all $1_\mu$ as $\mu$ varies over the set $\Phi = \Phi'
\cup \{\lambda\}$. This completes the proof.
\end{proof}

\begin{prop}\label{prop:rat-basis}
  For each $\lambda \in \pi$, let $\pi_\lambda = \{\mu \in \pi: \mu
  \ldomeq \lambda\}$ and put $d(\lambda) = \dim_{\Q(v)}
  \Delta(\lambda)$. Pick elements $\overline{x}_1, \dots,
  \overline{x}_{d(\lambda)} \in \Schur(\pi_\lambda)^-$ such that the
  set 
  \[
    \{ \overline{x}_s 1_\lambda: 1 \le s \le d(\lambda)\}
  \] 
  is a $\Q(v)$-basis of $\Delta(\lambda) = \Schur(\pi_\lambda)^-
  1_\lambda$. For each $s = 1, \dots, d(\lambda)$ let $x_s \in
  \Schur(\pi)^-$ be any preimage of $\overline{x}_s$ under the
  quotient map $p_{\pi, \pi_\lambda}$. Put $C^\lambda_{s,t} = x_s
  1_\lambda x^*_t$, for any $1 \le s,t \le d(\lambda)$.  Then the set
  \[
    B(\pi) = \textstyle \bigsqcup_{\lambda \in \pi} \{ C^\lambda_{s,t} :
    1 \le s,t \le d(\lambda)\}
  \]
  is a basis of $\Schur = \Schur(\pi)$ over $\Q(v)$.
\end{prop}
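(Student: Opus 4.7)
The plan is to combine a dimension count with induction on $|\pi|$, peeling off one maximal weight at a time. By Proposition~\ref{prop:ss}, $\Schur(\pi)$ is semisimple with pairwise non-isomorphic simple modules $\{\Delta(\lambda):\lambda \in \pi\}$, so
\[
\dim_{\Q(v)} \Schur(\pi) \;=\; \textstyle\sum_{\lambda \in \pi} d(\lambda)^2 \;=\; |B(\pi)|,
\]
and it therefore suffices to prove that $B(\pi)$ spans $\Schur(\pi)$ over $\Q(v)$.

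For the inductive step, fix a maximal element $\lambda_0 \in \pi$ and let $\pi' = \pi - \{\lambda_0\}$, which is again saturated. Corollary~\ref{cor:kernel}(a) gives the short exact sequence
\[
0 \longrightarrow \Schur\, 1_{\lambda_0}\,\Schur \longrightarrow \Schur(\pi) \xrightarrow{\,p_{\pi,\pi'}\,} \Schur(\pi') \longrightarrow 0,
\]
and I will show that the $C^{\lambda_0}_{s,t}$ form a basis of the kernel while the images $p_{\pi,\pi'}(C^\lambda_{s,t})$ for $\lambda \in \pi'$ form a basis of the quotient; splicing the two assertions then finishes the induction. (The base case $\pi = \{\lambda_0\}$ is automatic: $\pi' = \emptyset$, $\Schur(\pi') = 0$, and $\Schur(\pi)$ coincides with the kernel.)

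For the kernel, Corollary~\ref{cor:mult-iso} states that multiplication gives a $\Schur$-bimodule isomorphism $\Schur\,1_{\lambda_0} \otimes_{\Q(v)} 1_{\lambda_0}\,\Schur \xrightarrow{\,\sim\,} \Schur\,1_{\lambda_0}\,\Schur$. By the corollary following Theorem~\ref{thm:simplicity}, maximality of $\lambda_0$ gives $\Schur\,1_{\lambda_0} \cong \Delta(\lambda_0)$, with basis $\{x_s 1_{\lambda_0}\}$; applying the anti-involution $*$ of \ref{ss:star-omega} (which fixes $1_{\lambda_0}$ and exchanges the two one-sided ideals) transports this to the basis $\{1_{\lambda_0} x_t^*\}$ of $1_{\lambda_0}\,\Schur$. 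Multiplying the resulting tensor-product basis yields exactly $\{C^{\lambda_0}_{s,t}\}$ as a basis of $\Schur\,1_{\lambda_0}\,\Schur$. For the quotient, maximality of $\lambda_0$ forces $\lambda_0 \not\ldomeq \lambda$ for every $\lambda \in \pi'$, so $\pi_\lambda \subseteq \pi'$ and $p_{\pi,\pi_\lambda}$ factors as $p_{\pi',\pi_\lambda} \circ p_{\pi,\pi'}$; consequently $p_{\pi,\pi'}(x_s) \in \Schur(\pi')^-$ is itself an admissible lift of $\overline{x}_s$ in the sense of the proposition, and applying the inductive hypothesis to $\pi'$ with these lifts makes $\{p_{\pi,\pi'}(C^\lambda_{s,t}) : \lambda \in \pi'\}$ a basis of $\Schur(\pi')$. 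The argument is almost entirely bookkeeping; the single subtlety worth checking is that $\pi_\lambda$ agrees whether computed relative to $\pi$ or to $\pi'$ when $\lambda \in \pi'$, which is immediate from the maximality of $\lambda_0$. All the genuine content is imported from Corollary~\ref{cor:mult-iso}, which does the essential work.
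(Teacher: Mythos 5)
Your proof is correct and follows essentially the same route as the paper: induction on $|\pi|$ peeling off a maximal weight $\lambda_0$, the factorization $p_{\pi,\pi_\lambda} = p_{\pi',\pi_\lambda}\circ p_{\pi,\pi'}$ to apply the inductive hypothesis to $\Schur(\pi')$, and Corollary \ref{cor:mult-iso} to handle the ideal $\Schur 1_{\lambda_0}\Schur$. The only (cosmetic) difference is organizational --- you exhibit a basis of the kernel and splice it with lifts of a basis of the quotient, whereas the paper proves linear independence of $B(\pi)$ and then invokes the dimension count from Proposition \ref{prop:ss}; both arguments rest on the same ingredients.
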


\begin{proof}
If $\pi = \{\lambda_0\}$ has cardinality one we have $\Schur = \Schur
1_{\lambda_0} \Schur$ and the claim follows immediately from the
isomorphism of Corollary \ref{cor:mult-iso}. 

Proceeding by induction on the cardinality of $\pi$, we now assume
that $|\pi| \ge 2$. Pick any maximal element $\lambda_0$ of $\pi$ and
put $\pi' = \pi - \{\lambda_0\}$.  For any $\lambda \in \pi'$, the
quotient map $p_{\pi, \pi_\lambda}$ factors through $\Schur(\pi')$:
i.e., the diagram
\[
\begin{gathered}
\xymatrix{
  \Schur(\pi)\ar[dr]_{p_{\pi,\pi'}} \ar[rr]^{p_{\pi,\pi_\lambda}} && \Schur(\pi_\lambda) \\
           &  \Schur(\pi') \ar[ur]_{p_{\pi',\pi_\lambda}}
} 
\end{gathered}\tag{b}
\]
commutes.  Thus by the inductive hypothesis the set
\[
   B(\pi') = \textstyle \bigsqcup_{\lambda \in \pi'} \{
   p_{\pi,\pi'}(C^\lambda_{s,t}) : 1 \le s, t \le d(\lambda) \}
\]
is a basis of the algebra $\Schur(\pi')$. We will now show that the
set $B(\pi)$ is linearly independent. Suppose that there are scalars
$\omega^\lambda_{s,t} \in \Q(v)$ such that some linear combination of
the elements in $B(\pi)$ satisfies
\[
  \textstyle \sum_{\lambda \in \pi} \sum_{1 \le s,t \le d(\lambda)}
  \omega^\lambda_{s,t} \,C^\lambda_{s,t} = 0. \tag{c}
\]
By applying the linear map $p_{\pi,\pi'}$ to the equality (c) above we
get
\[
  \textstyle \sum_{\lambda \in \pi'} \sum_{1 \le s,t \le d(\lambda)}
  \omega^\lambda_{s,t} \, p_{\pi,\pi'}(C^\lambda_{s,t}) = 0.
\]
By the induction hypothesis $B(\pi')$ is a basis, so all the
$\omega^\lambda_{s,t} = 0$, for $\lambda \in \pi'$, $1 \le s,t \le
d(\lambda)$. Hence the original linear combination in equation (c) 
reduces to a linear combination of the form
\[
  \textstyle \sum_{1 \le s,t \le d(\lambda_0)}
  \omega^{\lambda_0}_{s,t} \, C^{\lambda_0}_{s,t} = 0.
\]
This forces all the $\omega^{\lambda_0}_{s,t} = 0$, for $1 \le s,t \le
d(\lambda_0)$, since by Corollary \ref{cor:mult-iso} the set $\{
C^{\lambda_0}_{s,t}: 1 \le s,t \le d(\lambda_0) \}$ is a $\Q(v)$-basis
of the ideal $\Schur 1_{\lambda_0} \Schur$. This proves the desired
linear independence of the set $B(\pi)$.

Since by Proposition \ref{prop:ss} the cardinality of $B(\pi)$ is
equal to the dimension of $\Schur=\Schur(\pi)$, the set $B(\pi)$ is a
basis of $\Schur$, and the proof is complete.
\end{proof}

Now we can show that the basis constructed in the preceding
proposition is cellular. This is the main result we have been aiming
towards.

\begin{thm}\label{thm:cellular}
  The basis $B(\pi)$ in Proposition \ref{prop:rat-basis} is a cellular
  basis of $\Schur = \Schur(\pi)$, with respect to the anti-involution
  $*$ defined in \ref{ss:star-omega}.
\end{thm}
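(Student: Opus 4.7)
The plan is to verify directly the two defining properties of a cellular basis in the sense of Graham--Lehrer: the anti-involution condition and the multiplication condition modulo higher terms.

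The anti-involution condition is essentially immediate. Since $1_\lambda^* = 1_\lambda$ by \ref{ss:star-omega}(b) and $*$ reverses products, we compute $(C^\lambda_{s,t})^* = (x_s 1_\lambda x_t^*)^* = x_t 1_\lambda x_s^* = C^\lambda_{t,s}$ for any $\lambda \in \pi$ and any $s,t$, so the map $C \colon (s,t) \mapsto C^\lambda_{s,t}$ is compatible with $*$ in the required way.

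The heart of the matter is the multiplication rule, for which I would first identify the \emph{higher cell ideal}. Set
\[
  \Schur^{>\lambda} := \operatorname{span}_{\Q(v)} \{ C^\mu_{u,v} : \mu \in \pi,\ \mu \dom \lambda \}.
\]
The first step is to show $\Schur^{>\lambda}$ equals the two-sided ideal $\Schur[\dom \lambda] = \sum_{\mu \dom \lambda} \Schur 1_\mu \Schur$. The inclusion $\subseteq$ is clear since $C^\mu_{u,v} \in \Schur 1_\mu \Schur$. For the reverse inclusion, note that $\Phi = \pi[\dom \lambda]$ is cosaturated with complement $\pi' = \{\mu \in \pi : \mu \not\dom \lambda\}$ (which is easily seen to be saturated), and by Lemma \ref{lem:rat-filt}, $\Schur[\dom \lambda] = \ker p_{\pi,\pi'}$. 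The images $p_{\pi,\pi'}(C^\mu_{u,v})$ for $\mu \in \pi'$ form a basis of $\Schur(\pi')$ by Proposition \ref{prop:rat-basis} applied to $\pi'$ (once one checks the compatibility of the chosen basis elements under $p_{\pi,\pi'}$). A dimension count then forces $\Schur^{>\lambda} = \ker p_{\pi,\pi'}$, which is a two-sided ideal.

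Next I would use the quotient structure to establish the multiplication rule. Since $\lambda$ is maximal in $\pi'$ (if some $\mu \in \pi'$ satisfied $\mu \dom \lambda$ then $\mu \notin \pi'$ by definition), the corollary after Theorem \ref{thm:simplicity} gives $\Schur(\pi') 1_\lambda \cong \Delta(\lambda)$. The further projection $p_{\pi',\pi_\lambda}$ restricts to an isomorphism $\Schur(\pi') 1_\lambda \xrightarrow{\sim} \Schur(\pi_\lambda) 1_\lambda = \Delta(\lambda)$, under which $x_s 1_\lambda \pmod{\Schur^{>\lambda}}$ corresponds to $\overline{x}_s 1_\lambda$. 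So the family $\{x_s 1_\lambda \bmod \Schur^{>\lambda}\}_{1 \le s \le d(\lambda)}$ is a basis of the simple module $\Schur 1_\lambda / (\Schur^{>\lambda} \cap \Schur 1_\lambda)$. Given any $a \in \Schur$, expanding $a x_s 1_\lambda$ in this basis yields
\[
  a x_s 1_\lambda \equiv \sum_{s'} r_a(s',s)\, x_{s'} 1_\lambda \pmod{\Schur^{>\lambda}},
\]
with scalars $r_a(s',s) \in \Q(v)$ independent of $t$. Right-multiplying by $x_t^*$ and using that $\Schur^{>\lambda}$ is a two-sided ideal gives
\[
  a\, C^\lambda_{s,t} \equiv \sum_{s'} r_a(s',s)\, C^\lambda_{s',t} \pmod{\Schur^{>\lambda}},
\]
which is the required cellular multiplication rule.

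The main obstacle is the identification $\Schur^{>\lambda} = \Schur[\dom \lambda]$ together with the verification that, after projecting via $p_{\pi,\pi'}$, the lifts $x_s$ still produce a basis of $\Delta(\lambda)$ inside $\Schur(\pi') 1_\lambda$; once this is in hand, reducing the multiplication to that on the simple module $\Delta(\lambda)$ is automatic.
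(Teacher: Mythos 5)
Your proposal is correct and follows essentially the same route as the paper: both reduce modulo the ideal $\Schur[\dom\lambda]=\ker p_{\pi,\pi'}$ (via Lemma \ref{lem:rat-filt}), exploit the maximality of $\lambda$ in $\pi'$ together with Corollary \ref{cor:mult-iso} to read off the action of $a$ from the simple module $\Delta(\lambda)$, and conclude the independence of $r_a(s,s')$ from $t$. The only difference is cosmetic: you explicitly verify that the span of the higher basis elements $C^\mu_{u,v}$ ($\mu\dom\lambda$) coincides with the ideal $\Schur[\dom\lambda]$ — a point the paper's proof leaves implicit and records afterwards in Corollary \ref{prop:rat-filtration} — and you obtain the congruence by right-multiplying by $x_t^*$ rather than invoking the tensor-product decomposition directly.
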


\begin{proof}
We have to show that the basis $B(\pi)$ satisfies the conditions of
Definition 1.1 in \cite{GL}. One of the requirements is that
$(C^\lambda_{s,t})^* = C^\lambda_{t,s}$ for all $\lambda, s, t$, which
is clear by the definitions.

Fix $\lambda \in \pi$. Consider the cosaturated subset $\Phi =
\pi[\dom \lambda]$, and put $\pi' = \pi - \Phi$. Then $\pi'$ is a
saturated subset of $\pi$ and $\lambda$ is a maximal element of
$\pi'$. We have by Corollary \ref{cor:mult-iso} an isomorphism
\[
  \Delta(\lambda) \otimes_{\Q(v)} \Delta^*(\lambda) \to \Schur(\pi')
  1_\lambda \Schur(\pi') \tag{a}
\]
induced by multiplication. Since $\{\overline{x}_s 1_\lambda : 1 \le s
\le d(\lambda)\}$ is a basis of the left ideal $\Delta(\lambda) =
\Schur(\pi_\lambda)1_\lambda$, it follows that for any $a \in
\Schur(\pi)$ we have unique scalars $r_{a}(s,s') \in \Q(v)$ such that
\[
  a \cdot \overline{x}_s 1_\lambda = \textstyle
  \sum_{s'=1}^{d(\lambda)} r_a(s,s') \, \overline{x}_{s'}1_\lambda. \tag{b}
\]
It follows from the isomorphism in (a) that in the algebra
$\Schur(\pi')$ we have equalities
\[
  p_{\pi,\pi'}(a) \cdot p_{\pi,\pi'}(C^\lambda_{s,t}) =\textstyle
  \sum_{s'=1}^{d(\lambda)}  r_a(s,s')\, p_{\pi,\pi'}(C^\lambda_{s',t}). \tag{c}
\]
Evidently, the coefficient $r_a(s,s')$ is independent of $t$. Now we
have an algebra isomorphism $\Schur(\pi) / \Schur[\dom \lambda] \simeq
\Schur(\pi')$ induced by the map $p_{\pi,\pi'}$. Reading the 
equality (c) via this isomorphism gives the congruence
\[
  a \cdot C^\lambda_{s,t} \equiv \textstyle \sum_{s'=1}^{d(\lambda)}
  r_a(s,s')\, C^\lambda_{s',t} \pmod{\Schur[\dom \lambda]}
\]
where, as before, $r_a(s,s')$ is independent of $t$. This completes
the proof.
\end{proof}

The following is an immediate restatement of the preceding theorem. 

\begin{cor}\label{prop:rat-filtration}
  Let $\pi = \{\lambda_1, \lambda_2, \dots, \lambda_m\}$ be any
  enumeration of $\pi$ such that each $\lambda_j$ is maximal in
  $\{\lambda_j, \dots, \lambda_m\}$, for all $j = 1, \dots, m$.  Then
  each $\Phi_j = \{\lambda_1, \dots, \lambda_j\}$ is cosaturated in
  $\pi$ and 
  \[
    0 \subset \Schur[\Phi_1] \subset \cdots \subset \Schur[\Phi_{m-1}]
    \subset \Schur[\Phi_m] = \Schur
  \]
  is an increasing filtration of $\Schur = \Schur(\pi)$ by two-sided
  ideals. Each $\Schur[\Phi_j]$ is invariant under the anti-involution
  $*$, and 
  \[
    \textstyle \bigsqcup_{\lambda \in \Phi_j} \{C^\lambda_{s,t} : 1 \le
    s,t \le d(\lambda)\}
  \]
  is a basis of $\Schur[\Phi_j]$ over $\Q(v)$, for each $j = 1, \dots,
  m$.
\end{cor}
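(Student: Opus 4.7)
The plan is to read this corollary directly off from Theorem \ref{thm:cellular}, Proposition \ref{prop:rat-basis}, Lemma \ref{lem:rat-filt}, and Proposition \ref{prop:ss}. First I would verify that each $\Phi_j$ is cosaturated in $\pi$. Take $\lambda_i \in \Phi_j$ (so $i \le j$) and suppose $\mu \in \pi$ satisfies $\mu \domeq \lambda_i$; writing $\mu = \lambda_k$, if $k > j \ge i$ then $\lambda_k$ lies in the tail $\{\lambda_i, \dots, \lambda_m\}$, so maximality of $\lambda_i$ in that tail forces $\lambda_k = \lambda_i$, contradicting $k \ne i$. Hence $k \le j$ and $\mu \in \Phi_j$.

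Each $\Schur[\Phi_j]$ is a two-sided ideal by definition, and the inclusions $\Schur[\Phi_1] \subseteq \cdots \subseteq \Schur[\Phi_m] = \Schur$ are immediate from $\Phi_j \subset \Phi_{j+1}$; strictness will follow a posteriori from the basis count below. For $*$-invariance, since $1_\mu^* = 1_\mu$ for every $\mu \in \Phi_j$ and $*$ is an algebra anti-involution, the image $(\Schur[\Phi_j])^*$ is the two-sided ideal of $\Schur$ generated by the same set $\{1_\mu : \mu \in \Phi_j\}$, hence equals $\Schur[\Phi_j]$.

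For the basis claim, Lemma \ref{lem:rat-filt} identifies $\Schur[\Phi_j]$ with the kernel of $p_{\pi,\pi \setminus \Phi_j}$. Each $\lambda \in \Phi_j$ is dominant and does not lie in $\pi \setminus \Phi_j$; since every $W$-orbit contains a unique dominant weight, we have $\lambda \notin W(\pi \setminus \Phi_j)$, so $p_{\pi,\pi \setminus \Phi_j}(1_\lambda) = 0$ and therefore $p_{\pi,\pi \setminus \Phi_j}(C^\lambda_{s,t}) = 0$. Thus all $C^\lambda_{s,t}$ with $\lambda \in \Phi_j$ lie in $\Schur[\Phi_j]$. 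This set is linearly independent because it is a subset of $B(\pi)$, and since by Proposition \ref{prop:ss} combined with Lemma \ref{lem:rat-filt} one has
\[
\dim_{\Q(v)} \Schur[\Phi_j] = \dim_{\Q(v)} \Schur(\pi) - \dim_{\Q(v)} \Schur(\pi \setminus \Phi_j) = \sum_{\lambda \in \Phi_j} d(\lambda)^2,
\]
a cardinality count finishes the argument, and the strictness of the chain follows since $d(\lambda_{j+1})^2 > 0$.

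There is no real obstacle: the corollary is essentially a repackaging of Theorem \ref{thm:cellular} as the cell filtration associated with the chosen enumeration. The only point that requires any thought is the vanishing of $p_{\pi,\pi \setminus \Phi_j}(1_\lambda)$ for $\lambda \in \Phi_j$, and this reduces to the elementary fact that each Weyl orbit in $\X$ meets $\X^+$ in a single point.
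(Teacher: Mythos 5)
Your proposal is correct and follows the same route the paper intends: the paper offers no separate argument, calling the corollary an ``immediate restatement'' of Theorem \ref{thm:cellular}, and the details you supply (cosaturation of $\Phi_j$, $*$-invariance from $1_\mu^*=1_\mu$, and the identification $\Schur[\Phi_j]=\ker p_{\pi,\pi-\Phi_j}$ via Lemma \ref{lem:rat-filt} combined with the dimension count from Proposition \ref{prop:ss}) are exactly the ones implicit in the inductive proof of Proposition \ref{prop:rat-basis}. The only stylistic difference is that you route the containment $C^\lambda_{s,t}\in\Schur[\Phi_j]$ through the vanishing of $p_{\pi,\pi-\Phi_j}(1_\lambda)$, whereas it also follows directly from $C^\lambda_{s,t}\in\Schur 1_\lambda\Schur\subseteq\Schur[\Phi_j]$ by the definition of $\Schur[\Phi_j]$.
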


The increasing filtration in the corollary is a defining cell chain
for the cellular structure on $\Schur(\pi)$, in the sense of K\"{o}nig
and Xi \cite{KX1, KX3}.

\begin{example}\label{ex:r1cellular-basis} 
Suppose the root datum has rank one, and $\pi$ is a finite saturated
subset of $\X^+ = \Z_{\ge 0}$. Then $\{F_i^{(b)}1_n : 0 \le b \le n\}$
is a basis of $\Delta(n)$, for each $n \in \pi$, so the set
\begin{equation*}
  B(\pi) = \textstyle \bigsqcup_{n \in \pi} \{ F_i^{(b)} 1_n
  E_i^{(a)} \colon 0 \le a,b \le n \}
\end{equation*}
is the cellular basis of Theorem \ref{thm:cellular} for $\Schur(\pi)$.
\end{example}

\subsection{}\label{ss:r1canonical-basis}
It is interesting to compare the cellular basis of Example
\ref{ex:r1cellular-basis} to Lusztig's canonical basis of the modified
form $\dot{\mathbf{U}}(\mathfrak{sl}_2)$ of the quantized enveloping
algebra $\mathbf{U}(\mathfrak{sl}_2)$. Let $n \in \pi$. If $a+b \ge n$
then
\begin{equation}
  F_i^{(b)} 1_n E_i^{(a)} = \textstyle \sum_{t\ge 0} \sqbinom{a+b-n}{t}_i \,
  E_i^{(a-t)} 1_{n-2(a+b-t)}F_i^{(b-t)}. 
\end{equation}
This follows from Lemma \ref{lem:commutation} by an easy calculation.
This shows that for all $a+b \ge n$ the element $F_i^{(b)} 1_n
E_i^{(a)}$ is equal to the element $E_i^{(n-b)} 1_{-n} F_i^{(n-a)} =
E_i^{(a')} 1_{-n} F_i^{(b')}$ modulo terms in $\Schur[\dom n]$, where
$a' = n-b$, $b'=n-a$. Note that the condition $a+b \ge n$ is
equivalent to the condition $a'+b' \le n$. Thus we obtain a different
cellular basis of $\Schur$ of the form
\begin{equation}
  \textstyle \bigsqcup_{n \in \pi} \big(\{ F_i^{(b)} 1_n E_i^{(a)} \colon a+b
  \le n \} \cup \{ E_i^{(a')} 1_{-n} F_i^{(b')} \colon a'+b' \le n \}\big)
\end{equation}
which has a unitriangular relation with the original basis. Note that
the union in (b) is not disjoint: when $a+b = n$ we have from (a) the
equality $F_i^{(b)} 1_n E_i^{(a)} = E_i^{(a)} 1_{-n} F_i^{(b)}$. The
basis in (b) is the same as Lusztig's canonical basis in rank one;
compare with Prop.~25.3.2 in \cite{Lusztig:book}.

\section{Specialization: cellular bases of $\Schur_q(\pi)$}%
\label{sec:cellular-A}\noindent
Now we consider specializations $\Schur_q(\pi)$ with respect to the
$\Schur(\pi)$-analogue of the Lusztig divided power integral form of a
quantized enveloping algebra. We will show that an analogue of
Theorem \ref{thm:cellular} holds for $\Schur_q(\pi)$. The argument for
this is less self-contained, and from now on we need to assume some
facts that (seem to) depend on properties of the canonical basis.

\subsection{}
We summarize some results of Lusztig.  Fix a root datum (of finite
type) and let $\UU$ be the quantized enveloping algebra over $\Q(v)$
determined by the given root datum. It is defined by generators $E_i$,
$F_i$ ($i \in I$), $K_h$ ($h \in \X^\vee$) and relations which we
omit.  Let $\dot{\UU}$ be the modified form of $\UU$ constructed in
\cite{Lusztig:PNAS} or \cite{Lusztig:book}. This is generated by an
infinite family $1_\lambda$ ($\lambda \in \X$) of pairwise orthogonal
idempotents, along with $E_i 1_\lambda$, $F_i 1_\lambda$ ($i \in I$,
$\lambda \in \X$).  In \cite{Lusztig:book} it is shown that the
canonical basis for the plus part $\UU^+$ of $\UU$ extends to all of
$\dot{\UU}$. Theorem 29.3.3 in \cite{Lusztig:book}, which Lusztig
calls a ``refined Peter--Weyl theorem'', is essentially the statement
that this basis is an ``integral'' cellular basis of $\dot{\UU}$ in
the sense of \cite{GL}. Specifically, the canonical basis of
$\dot{\UU}$ can be written as a disjoint union $\dot{\BB} =
\bigsqcup_{\lambda \in \X^+} \dot{\BB}[\lambda]$, where
\begin{equation}
  \dot{\BB}[\lambda] = \{\textbf{b}^\lambda_{S,T}: S,T \in
  \mathcal{T}_\lambda\}.
\end{equation}
The set $\mathcal{T}_\lambda$ is any indexing set for a basis of
weight vectors of the finite dimensional simple $\dot{\UU}$-module of
highest weight $\lambda$; this could simply be the set $\{1, \dots,
d(\lambda)\}$. There is a unique $\Q(v)$-linear anti-involution $*$ on
$\dot{\UU}$ such that $1_\lambda^* = 1_\lambda$ and $(E_i 1_\lambda)^*
= 1_\lambda F_i$ for all $i \in I$, $\lambda \in \X$. For any
$\lambda, S, T$ we have
\begin{gather}
  (\mathbf{b}^\lambda_{S,T})^* = \mathbf{b}^\lambda_{T,S}\\
  u \cdot \mathbf{b}^\lambda_{S,T} = \sum_{S' \in \mathcal{T}_\lambda}
  r_u(S', S) \mathbf{b}^\lambda_{S',T} \pmod{\dot{\UU}[\dom \lambda]}
\end{gather}
for all $u \in \dot{\UU}$, where $r_u(S,S') \in \A$ is independent of
$T$. Here $\dot{\UU}[\dom \lambda]$ is the ideal spanned by
$\bigsqcup_{\mu \dom \lambda} \dot{\BB}[\mu]$; the fact that this is an
ideal is implicit in the refined Peter--Weyl theorem. 

There is an integral form ${}_\A\dot{\UU}$ of $\dot{\UU}$, defined as
the $\A$-subalgebra of $\dot{\UU}$ generated by all 
\[
  1_\lambda\ (\lambda \in \X),\quad E_i^{(m)}1_\lambda,\, F_i^{(m)}
  1_\lambda\ (i \in I, m \ge 0, \lambda \in \X).
\]
Lusztig proves that $\dot{\BB}$ is an $\A$-basis of ${}_\A\dot{\UU}$
and that the structure constants with respect to this basis all lie in
$\A$. In particular, ${}_\A \dot{\UU}$ is free as an $\A$-module and
the natural map
\[
  \Q(v) \otimes_\A ({}_\A \dot{\UU}) \to \dot{\UU}
\]
given by $a \otimes u \mapsto au$, is an isomorphism. For further
details on the above facts see Section 2 of \cite{Doty:PGSA}.

\subsection{}
For any subset $\Phi$ of $\X^+$ we define $\dot{\UU}[\Phi]$ to be the
subspace of $\dot{\UU}$ spanned over $\Q(v)$ by $\bigsqcup_{\mu \in
  \Phi} \dot{\BB}[\mu]$, and we set ${}_\A \dot{\UU}[\Phi] = {}_\A
\dot{\UU} \cap \dot{\UU}[\Phi]$. Then ${}_\A \dot{\UU}[\Phi]$ is
spanned over $\A$ by $\bigsqcup_{\mu \in \Phi} \dot{\BB}[\mu]$.

Now let $\pi$ be a given finite saturated subset of $\X^+$.  Then its
complement $\Phi = \X^+ - \pi$ is a cosaturated subset of $\X^+$ and
$\dot{\UU}[\X^+-\pi]$, ${}_\A\dot{\UU}[\X^+-\pi]$ are ideals in
$\dot{\UU}$, ${}_\A \dot{\UU}$, respectively.  From \cite{Doty:PGSA}
we have algebra isomorphisms
\begin{align}
  \Schur(\pi) &\simeq \dot{\UU}/\dot{\UU}[\X^+-\pi], \\
  {}_\A\Schur(\pi) &\simeq {}_\A\dot{\UU}/{}_\A\dot{\UU}[\X^+-\pi] .
\end{align}
The map $\dot{\UU} \to \Schur(\pi)$ giving the isomorphism (a) is the
one sending $E_i1_\lambda$ and $F_i1_\lambda$ onto the corresponding
generators of $\Schur(\pi)$, and the map ${}_\A\dot{\UU} \to
{}_\A\Schur(\pi)$ producing the isomorphism (b) is its restriction. The
kernel of each map is generated by the idempotents $1_\mu$ such that
$\mu \in \X^+ - \pi$.  

Given any subset $\Phi$ of $\X^+$ we write $\dot{\BB}[\Phi] =
\bigsqcup_{\lambda \in \Phi} \dot{\BB}[\lambda]$.  The nonzero part of
the image of $\dot{\BB}[\pi]$ is an $\A$-basis of ${}_\A\Schur(\pi)$,
and in particular, ${}_\A\Schur(\pi)$ is a free $\A$-module of rank
$\textstyle \sum_{\lambda \in \pi} d(\lambda)^2$.  Furthermore, the
natural map
\begin{equation}
  \Q(v) \otimes_\A ({}_\A \Schur(\pi)) \to \Schur(\pi)
\end{equation}
given by $a \otimes u \mapsto au$, is an isomorphism.

\subsection{}
For $\lambda \in \pi$ we may regard the simple $\Schur(\pi)$-module
$\Delta(\lambda)$ as a $\dot{\UU}$-module via the map $\dot{\UU} \to
\Schur(\pi)$. It must be simple as a $\dot{\UU}$-module. The family
$\{ \Delta(\lambda) : \lambda \in \X^+\}$ is a complete set of
representatives of the isomorphism classes of simple unital
$\dot{\UU}$-modules. (See section 23.1.4 in \cite{Lusztig:book} for
the meaning of unital in this context.) The family $\{ \Delta(\lambda)
: \lambda \in \X^+\}$ is also a complete set of representatives of the
isomorphism classes of simple integrable $\UU$-modules of type
$\mathbf{1}$. (See section 5.2 of \cite{Jantzen:LQG} for the
definition of type $\mathbf{1}$ representations.)

For any $\lambda \in \pi$ let ${}_\A\Delta(\lambda)$ be the
$\aschur$-submodule of $\Delta(\lambda)$ generated by a chosen maximal
vector $0 \ne v_\lambda \in \Delta(\lambda)$. It is known that
${}_\A\Delta(\lambda)$ is free as an $\A$-module, of rank equal to
$d(\lambda)$. Furthermore,
\begin{enumerate}
\item [(a)] ${}_\A\Delta(\lambda)$ generates $\Delta(\lambda)$ as a
  vector space over $\Q(v)$,

\item [(b)] ${}_\A\Delta(\lambda) = \bigoplus_{\mu \in W\pi} 1_\mu
  ({}_\A\Delta(\lambda))$, where $1_\mu ({}_\A\Delta(\lambda)) =
  {}_\A\Delta(\lambda) \cap 1_\mu \Delta(\lambda)$,

\item [(c)] the nonzero part of the image of $\dot{\BB}$ in
  ${}_\A\Delta(\lambda) = \aschur(\pi_\lambda)1_\lambda$ under the map
  $a \mapsto a 1_\lambda$ is an $\A$-basis of ${}_\A\Delta(\lambda)$.
\end{enumerate}
\noindent
Conditions (a) and (b) say that ${}_\A\Delta(\lambda)$ is a minimal
admissible lattice in $\Delta(\lambda)$. In particular, (a) implies
that the natural map
\[
  \Q(v) \otimes_\A ({}_\A \Delta(\lambda)) \to \Delta(\lambda),
\]
given by $a \otimes v \mapsto av$, is surjective. In fact, by Lemma
\ref{lem:int} it is an isomorphism. Furthermore, any $\A$-basis of
${}_\A \Delta(\lambda)$ is also a $\Q(v)$-basis of $\Delta(\lambda)$.

\subsection{}
Given any commutative ring $\Bbbk$ with 1 and a fixed invertible
element $q \in \Bbbk$ we regard $\Bbbk$ as an $\A$-algebra via the
ring homomorphism $\A \to \Bbbk$ mapping $v \to q$ for all $k \in
\Z$. We wish to study the algebra
\[
   \Schur_q(\pi) = {}_\Bbbk \Schur(\pi) = \Bbbk \otimes_\A
  (\aschur(\pi)).
\]
This algebra is the $\Bbbk$-form of $\Schur(\pi)$. We call it a
generalized $q$-Schur algebra. We will identify generators $E_i$,
$F_i$, and $1_\lambda$ with their images $1\otimes E_i$,
$1\otimes F_i$, and $1\otimes 1_\lambda$ in $\Schur_q(\pi)$.

In case $\Bbbk = \Q(v)$, regarded as $\A$-algebra via the natural
embedding $\A \subset \Q(v)$, we may identify $\Schur_v(\pi) =
{}_{\Q(v)} \Schur(\pi)$ with the rational form $\Schur(\pi)$.

\subsection{}
Put $\Schur = \Schur(\pi)$ and $\aschur = \aschur(\pi)$.  If $\Phi$ is
any cosaturated subset of $\pi$ we put $\aschur[\Phi] = \sum_{\mu \in
  \Phi} \aschur 1_\mu \aschur$. This is the ideal of $\aschur =
\aschur(\pi)$ generated by the idempotents $1_\mu$ such that $\mu \in
\Phi$. We have the following ``integral'' analogue of Lemma
\ref{lem:rat-filt}.

\begin{lem}\label{lem:A-filt}
  Let $\Phi$ be any cosaturated subset of $\pi$. Write $\Phi = \pi -
  \pi'$ where $\pi'$ is a saturated subset of $\pi$. Then the kernel
  of the restriction map ${}_\A p_{\pi,\pi'}$ defined in
  \ref{ss:A-quotient-morphisms} is equal to $\aschur[\Phi]$. 
\end{lem}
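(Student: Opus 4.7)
The plan is to follow the structure of the proof of the rational Lemma~\ref{lem:rat-filt} by induction on $|\Phi|$. The inductive step is formally identical: choose $\lambda$ minimal in $\Phi$ (so that $\Phi' := \Phi - \{\lambda\}$ remains cosaturated in $\pi$ and $\pi' \cup \{\lambda\}$ is saturated, with $\lambda$ then maximal in $\pi' \cup \{\lambda\}$ because $\Phi$ is cosaturated in $\pi$), factor ${}_\A p_{\pi,\pi'}$ through $\aschur(\pi' \cup \{\lambda\})$, apply the inductive hypothesis to get $\ker({}_\A p_{\pi,\pi' \cup \{\lambda\}}) = \aschur[\Phi']$, and apply the base case to get $\ker({}_\A p_{\pi' \cup \{\lambda\},\pi'}) = \aschur(\pi'\cup\{\lambda\}) \cdot 1_\lambda \cdot \aschur(\pi'\cup\{\lambda\})$. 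Surjectivity of ${}_\A p_{\pi, \pi'\cup\{\lambda\}}$ then lifts the second ideal to $\aschur \cdot 1_\lambda \cdot \aschur$ in $\aschur(\pi)$, yielding $\ker({}_\A p_{\pi,\pi'}) = \aschur[\Phi'] + \aschur \cdot 1_\lambda \cdot \aschur = \aschur[\Phi]$.

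The real work is the base case, which is the integral analogue of Corollary~\ref{cor:kernel}(a): if $\lambda_0$ is maximal in $\pi$, then $\ker({}_\A p_{\pi,\pi - \{\lambda_0\}}) = \aschur \cdot 1_{\lambda_0} \cdot \aschur$. The inclusion $\supseteq$ is immediate. For $\subseteq$, I would invoke the machinery recalled in Section~\ref{sec:cellular-A}: under the isomorphism $\aschur(\pi) \cong {}_\A\dot{\UU}/{}_\A\dot{\UU}[\X^+ - \pi]$, the kernel of ${}_\A p_{\pi,\pi - \{\lambda_0\}}$ is precisely the $\A$-span of the images of the canonical basis elements in $\dot{\BB}[\lambda_0]$. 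To place each such image in $\aschur \cdot 1_{\lambda_0} \cdot \aschur$, I would use that $1_{\lambda_0}$ is itself the canonical basis element $\mathbf{b}^{\lambda_0}_{T_0, T_0}$ (with $T_0$ indexing the highest-weight vector), together with the cellular congruence from Section~\ref{sec:cellular-A}, namely $u \cdot 1_{\lambda_0} \equiv \sum_{S} r_u(S, T_0) \, \mathbf{b}^{\lambda_0}_{S, T_0} \pmod{{}_\A\dot{\UU}[\dom \lambda_0]}$. Maximality of $\lambda_0$ in $\pi$ forces the higher-cell term to vanish in $\aschur(\pi)$; and because $u \mapsto u \cdot 1_{\lambda_0}$ surjects onto the free $\A$-module ${}_\A\Delta(\lambda_0)$, whose $\A$-basis is (the image of) $\{\mathbf{b}^{\lambda_0}_{S, T_0}\}$, every $\bar{\mathbf{b}}^{\lambda_0}_{S, T_0}$ appears in $\aschur \cdot 1_{\lambda_0}$. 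Applying the anti-involution $*$, for which $(\mathbf{b}^{\lambda_0}_{S,T})^* = \mathbf{b}^{\lambda_0}_{T,S}$, handles right multiplication and places every $\bar{\mathbf{b}}^{\lambda_0}_{S,T}$ inside $\aschur \cdot 1_{\lambda_0} \cdot \aschur$.

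The main obstacle is that no purely rank-based argument is available: the inclusion $\aschur \cdot 1_{\lambda_0} \cdot \aschur \hookrightarrow \ker({}_\A p_{\pi,\pi - \{\lambda_0\}})$ becomes an equality of $\Q(v)$-vector spaces after base change (both becoming $\Schur \cdot 1_{\lambda_0} \cdot \Schur$ by Corollaries~\ref{cor:kernel}(a) and~\ref{cor:mult-iso}), but an inclusion of free $\A$-modules of equal rank that becomes an isomorphism over the fraction field need not be an equality over $\A$. What rescues the argument is the essentially integral fact --- imported from \cite{Doty:PGSA} through \cite{Lusztig:book} --- that each cell of $\aschur(\pi)$ is generated modulo higher cells, as an $\aschur$-bimodule, by the single idempotent $1_{\lambda_0}$. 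This is precisely where the proof ceases to be self-contained.
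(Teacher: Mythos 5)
Your proof is correct, but it is organized rather differently from the paper's. The paper's proof of Lemma \ref{lem:A-filt} is direct and involves no induction on $|\Phi|$: identifying canonical basis elements with their images, it uses the decomposition $\dot{\BB}[\pi] = \dot{\BB}[\pi'] \sqcup \dot{\BB}[\Phi]$ together with the fact that $\dot{\BB}[\pi]$ and $\dot{\BB}[\pi']$ are simultaneously $\Q(v)$-bases of $\Schur(\pi)$, $\Schur(\pi')$ and $\A$-bases of $\aschur(\pi)$, $\aschur(\pi')$; combined with Lemma \ref{lem:rat-filt}, which identifies $\ker p_{\pi,\pi'}$ with the $\Q(v)$-span of $\dot{\BB}[\Phi]$, this pins down $\ker({}_\A p_{\pi,\pi'}) = \aschur \cap \Schur[\Phi]$ as the $\A$-span of $\dot{\BB}[\Phi]$, hence as $\aschur[\Phi]$. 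You instead transplant the induction from Lemma \ref{lem:rat-filt} and concentrate all of the integral difficulty in the base case, which you resolve by showing directly that the images of the top cell $\dot{\BB}[\lambda_0]$ lie in $\aschur 1_{\lambda_0} \aschur$ via the cellular congruences in $\dot{\UU}$ and the anti-involution $*$. Both arguments ultimately rest on the same imported input (freeness of $\aschur(\pi)$ on the image of $\dot{\BB}[\pi]$, from \cite{Doty:PGSA} and \cite{Lusztig:book}), and your diagnosis of why no rank count can substitute for it over $\A$ --- an inclusion of free $\A$-modules of equal rank need not be an equality --- is exactly the right point. What your version buys is an explicit justification of the inclusion of the $\A$-span of $\dot{\BB}[\Phi]$ into $\aschur[\Phi]$, a step the paper's proof passes over quickly, and it yields the integral analogue of Corollary \ref{cor:kernel}(a) as a standalone statement; the paper's version buys brevity. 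The one point to tighten is your reliance on the identification $1_{\lambda_0} = \mathbf{b}^{\lambda_0}_{T_0,T_0}$: this is true, but you can sidestep it by citing instead property (c) of the minimal admissible lattice recalled in Section \ref{sec:cellular-A}, which already asserts that the nonzero images of $\dot{\BB}$ under $a \mapsto a 1_{\lambda_0}$ form an $\A$-basis of ${}_\A\Delta(\lambda_0) = \aschur(\pi_{\lambda_0}) 1_{\lambda_0}$; this gives the surjectivity onto the column $\{\mathbf{b}^{\lambda_0}_{S,T_0}\}$ that your base case needs.
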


\begin{proof}
We will identify elements of $\dot{\BB}$ with their images in any
$\Schur(\pi)$.  We have by definition that $\dot{\BB}[\pi] =
\dot{\BB}[\pi'] \sqcup \dot{\BB}[\Phi]$.  Since $\dot{\BB}[\pi]$,
$\dot{\BB}[\pi']$ are respectively $\Q(v)$-bases of $\Schur(\pi)$,
$\Schur(\pi')$ it follows by Lemma \ref{lem:rat-filt} that the kernel
$\Schur[\Phi]$ of the map $p_{\pi,\pi'}$ is spanned over $\Q(v)$ by
$\dot{\BB}[\Phi]$. 

The kernel of the restriction map ${}_\A p_{\pi,\pi'}$ is equal to
$\aschur \cap \Schur[\Phi]$. Evidently $\aschur[\Phi] \subseteq
\aschur \cap \Schur[\Phi]$.  The reverse inclusion $\aschur[\Phi]
\supseteq \aschur \cap \Schur[\Phi]$ follows from the preceding
paragraph and the fact that $\dot{\BB}[\pi]$, $\dot{\BB}[\pi']$ are
respectively $\A$-bases of $\aschur(\pi)$, $\aschur(\pi')$.
\end{proof}

\begin{prop}\label{prop:A-basis}
  For each $\lambda \in \pi$, let $\pi_\lambda = \{\mu \in \pi: \mu
  \ldomeq \lambda\}$. Choose a maximal vector $0 \ne v_\lambda$ in
  ${}_\A\Delta(\lambda)$, and pick elements $\overline{x}_1, \dots,
  \overline{x}_{d(\lambda)} \in \aschur(\pi_\lambda)^-$ such that the
  set
  \[
    \{ \overline{x}_s v_\lambda: 1 \le s \le d(\lambda)\}
  \] 
  is an $\A$-basis of ${}_\A\Delta(\lambda) = \aschur(\pi_\lambda)^-
  1_\lambda$. For each $s$, let $x_s \in \aschur(\pi)^-$ be any
  preimage of $\overline{x}_s$ under the quotient map $p_{\pi,
    \pi_\lambda}$. Put $C^\lambda_{s,t} = x_s 1_\lambda x^*_t$, for
  any $1 \le s,t \le d(\lambda)$.  Then the set
  \[
    A(\pi) = \textstyle \bigsqcup_{\lambda \in \pi} \{ C^\lambda_{s,t} :
    1 \le s,t \le d(\lambda)\}
  \]
  is a basis of $\aschur(\pi)$ over $\A$.
\end{prop}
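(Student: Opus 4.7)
My plan is to adapt the proof of Proposition \ref{prop:rat-basis} to the $\A$-setting by induction on $|\pi|$, establishing $\A$-linear independence and $\A$-spanning of $A(\pi)$ separately. Linear independence transfers immediately: the paper has just recalled that $\aschur(\pi)$ is $\A$-free of rank $\sum_{\lambda\in\pi} d(\lambda)^2$ and that $\Q(v)\otimes_\A \aschur(\pi) = \Schur(\pi)$, so $\aschur(\pi)$ embeds in $\Schur(\pi)$. Under this embedding, $A(\pi)$ maps to the set $B(\pi)$ of Proposition \ref{prop:rat-basis} (for compatible choices of lifts $x_s$), which is a $\Q(v)$-basis; hence any $\A$-linear relation in $A(\pi)$ would yield a $\Q(v)$-linear relation in $B(\pi)$.

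For $\A$-spanning, I would run the induction using the short exact sequence
\[
0 \to \aschur 1_{\lambda_0} \aschur \to \aschur(\pi) \xrightarrow{{}_\A p_{\pi,\pi'}} \aschur(\pi') \to 0,
\]
where $\lambda_0$ is maximal in $\pi$ and $\pi' = \pi - \{\lambda_0\}$; the identification of the kernel with $\aschur 1_{\lambda_0}\aschur$ is Lemma \ref{lem:A-filt}. The inductive hypothesis produces an $\A$-basis of $\aschur(\pi')$ as the image of $\{C^\lambda_{s,t} : \lambda \in \pi'\}$, so by splicing it suffices to show that $\{C^{\lambda_0}_{s,t}\}$ spans $\aschur 1_{\lambda_0} \aschur$ over $\A$; this handles the base case $|\pi|=1$ simultaneously.

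The last point reduces to an integral analog of Corollary \ref{cor:mult-iso}. The key claim is $\aschur(\pi) 1_{\lambda_0} = \aschur(\pi)^- 1_{\lambda_0}$ for $\lambda_0$ maximal in $\pi$ (with the dual claim for $1_{\lambda_0}\aschur(\pi)$). By Lemma \ref{lem:commutation}(a), $E_i^{(a)} 1_{\lambda_0} = 1_{\lambda_0+a\alpha_i} E_i^{(a)}$; if $\lambda_0 + a\alpha_i \in W\pi$ for some $a>0$, its dominant Weyl conjugate $\nu$ would lie in $\pi$ and satisfy $\nu \domeq \lambda_0+a\alpha_i \dom \lambda_0$, contradicting the maximality of $\lambda_0$. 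Hence $E_i^{(a)} 1_{\lambda_0} = 0$ for all $a>0$, and combining with the triangular decomposition (Lemma \ref{lem:td}) gives $\aschur(\pi) 1_{\lambda_0} = \aschur(\pi)^- 1_{\lambda_0}$, which is canonically identified with ${}_\A\Delta(\lambda_0)$ carrying the $\A$-basis $\{x_s 1_{\lambda_0}\}$; dually $\{1_{\lambda_0} x_t^*\}$ is an $\A$-basis of $1_{\lambda_0}\aschur(\pi)$. The multiplication map $\aschur(\pi) 1_{\lambda_0} \otimes_\A 1_{\lambda_0}\aschur(\pi) \to \aschur 1_{\lambda_0} \aschur$ is visibly surjective (each generator $a 1_{\lambda_0} b$ factors as $(a 1_{\lambda_0}) \cdot (1_{\lambda_0} b)$), so its image $\{C^{\lambda_0}_{s,t}\}$ spans $\aschur 1_{\lambda_0} \aschur$. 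The main obstacle is this identification $\aschur(\pi) 1_{\lambda_0} = {}_\A\Delta(\lambda_0)$ for $\lambda_0$ maximal; once it is in place, everything else is a standard splicing argument for short exact sequences of $\A$-free modules.
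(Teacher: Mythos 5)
Your proposal is correct and has the same overall architecture as the paper's proof: $\A$-linear independence is inherited from the $\Q(v)$-basis of Proposition \ref{prop:rat-basis}, and spanning is proved by induction along the cell filtration, using Lemma \ref{lem:A-filt} to identify the kernels. The one place you genuinely diverge is the crucial sub-step that $\{C^{\lambda_0}_{s,t}\}$ spans the bottom ideal $\aschur 1_{\lambda_0}\aschur$ for $\lambda_0$ maximal. The paper disposes of this (in both the base case and the inductive step) by invoking the isomorphism $\aschur[\Phi_k]/\aschur[\Phi_{k-1}] \simeq {}_\A\Delta(\lambda_k)\otimes_\A {}_\A\Delta(\lambda_k)^*$, an integral analogue of Corollary \ref{cor:mult-iso} which it does not prove there and which ultimately rests on the canonical-basis facts recalled at the start of Section \ref{sec:cellular-A}. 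You instead argue directly from the presentation: maximality of $\lambda_0$ forces $E_i^{(a)}1_{\lambda_0}=0$ for $a>0$ (the dominant conjugate of $\lambda_0+a\alpha_i$ would lie in $\pi$ and strictly dominate $\lambda_0$), so the triangular decomposition of Lemma \ref{lem:td} gives $\aschur 1_{\lambda_0}=\aschur^-1_{\lambda_0}$, and surjectivity of multiplication finishes the spanning. This makes the inductive step more self-contained, confining the canonical-basis input to the freeness statements and Lemma \ref{lem:A-filt}; the small price is that the ``canonical identification'' $\aschur 1_{\lambda_0}\cong{}_\A\Delta(\lambda_0)$ via $p_{\pi,\pi_{\lambda_0}}$ does need a word of justification for injectivity (it is the restriction of the isomorphism $\Schur 1_{\lambda_0}\cong\Delta(\lambda_0)$ of simple $\Q(v)$-modules from Theorem \ref{thm:simplicity}), which your sketch glosses over but which is readily available. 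Both routes are sound.
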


\begin{proof}
By Proposition \ref{prop:rat-basis} the set $A(\pi)$ is a
$\Q(v)$-basis of $\Schur = \Schur(\pi)$. Hence it is linearly
independent over $\Q(v)$, and hence linearly independent over $\A$.
So it is enough to show that $A(\pi)$ spans ${}_\A\Schur$ over $\A$. 

Pick an ordering $\lambda_1, \lambda_2, \dots, \lambda_n$ of $\pi$
such that $\lambda_j$ is a maximal element of $\pi_j = \{\lambda_j,
\dots, \lambda_n\}$ for all $j = 1, \dots, n$. Then each $\pi_j$ is a
saturated subset of $\pi$ and $\Phi_j = \pi - \pi_j$ is cosaturated.
Clearly $\aschur[\Phi_j] \subset \aschur[\Phi_{j+1}]$ for all $j<n$
and $\aschur[\Phi_n] \subset \aschur$. We claim that $\aschur[\Phi_n]
= \aschur$. To see this, observe that $\aschur[\Phi_n]$ is a left
$\aschur$-module. It contains the idempotents $1_\lambda$ for any
$\lambda \in \pi$ by definition, and it follows from an easy induction
argument that it contains $1_{w(\lambda)}$ for all $w \in W$, $\lambda
\in \pi$. Thus $1 = \sum_{\mu \in W\pi} 1_\mu \in \aschur[\Phi_n]$, so
we have the inclusion $\aschur = \aschur \cdot 1 \subseteq
\aschur[\Phi_n]$, and the claim is proved. Hence we have a filtration
\[
0 \subset \aschur[\Phi_1] \subset \aschur[\Phi_2] \subset \cdots
\subset \aschur[\Phi_n] = \aschur
\]
of $\aschur$ by two-sided ideals.  We use this filtration to prove
inductively that the set $\bigsqcup_{j=1}^k \{C^{\lambda_j}_{s,t} : 1
\le s,t \le d(\lambda_j)\}$ spans the ideal $\aschur[\Phi_k]$ for each
$k = 1, \dots, n$.

Since $\aschur[\Phi_1] = \aschur[\{\lambda_1\}] = \aschur
1_{\lambda_1}\aschur$ the claim is true in the base case. 

Assuming by induction that $\bigsqcup_{j=1}^{k-1} \{C^{\lambda_j}_{s,t}
: 1 \le s,t \le d(\lambda_j)\}$ spans the ideal $\aschur[\Phi_{k-1}]$,
the existence of the isomorphism
\[
  \aschur[\Phi_k] / \aschur[\Phi_{k-1}] \simeq {}_\A \Delta(\lambda_k)
  \otimes_\A ({}_\A \Delta(\lambda_k)^*)
\]
shows that each element of $\aschur[\Phi_k]$ is expressible as an
$\A$-linear combination of the $C^{\lambda_k}_{s,t}$ modulo terms in
the kernel $\aschur[\Phi_{k-1}]$. Hence
$
  \textstyle\bigsqcup_{j=1}^{k} \{C^{\lambda_j}_{s,t} : 1 \le s,t \le
  d(\lambda_j)\}
$
spans $\aschur[\Phi_{k}]$, as desired.  This completes the proof.
\end{proof}

\begin{cor}\label{cor:k-basis}
  The basis $A(\pi)$ of the preceding proposition is a cellular basis
  of $\aschur = \aschur(\pi)$, with respect to the anti-involution
  $*$.  For any commutative ring $\Bbbk$ which is an $\A$-algebra via
  the specialization $v \mapsto q$, for an invertible $q \in \Bbbk$,
  the image of this basis in $\Schur_q(\pi)$ is a cellular basis of
  $\Schur_q(\pi)$.
\end{cor}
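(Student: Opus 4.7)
The plan is to verify that $A(\pi)$ satisfies the Graham--Lehrer axioms (Definition 1.1 in \cite{GL}) for $\aschur = \aschur(\pi)$, and then obtain the assertion for $\Schur_q(\pi)$ by base change along $\A \to \Bbbk$. By Proposition \ref{prop:A-basis}, $A(\pi)$ is already an $\A$-basis of $\aschur$, and the anti-involution $*$ preserves $\aschur$, since it swaps the generating divided powers $E_i^{(a)} \leftrightarrow F_i^{(a)}$ and fixes each $1_\mu$. The symmetry condition $(C^\lambda_{s,t})^* = C^\lambda_{t,s}$ is then immediate from $C^\lambda_{s,t} = x_s 1_\lambda x_t^*$, so the only substantive point is the cell-multiplication congruence.

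I would deduce that congruence from the proof of Theorem \ref{thm:cellular} by checking integrality of the structure constants. Given $a \in \aschur$, define $r_a(s,s') \in \Q(v)$ by the action on the cell module,
\[
  a \cdot (\overline{x}_s\, 1_\lambda) = \sum_{s'} r_a(s,s')\, \overline{x}_{s'}\, 1_\lambda,
\]
exactly as in the proof of Theorem \ref{thm:cellular}. That proof already shows $r_a(s,s')$ is independent of $t$ and that $a \cdot C^\lambda_{s,t} \equiv \sum_{s'} r_a(s,s')\, C^\lambda_{s',t}$ modulo $\Schur[\dom \lambda]$. The new input is that ${}_\A \Delta(\lambda) = \aschur(\pi_\lambda)^- 1_\lambda$ is an $\aschur$-stable $\A$-lattice in $\Delta(\lambda)$ admitting the $\A$-basis $\{\overline{x}_s v_\lambda\}$ by the construction in Proposition \ref{prop:A-basis}, so $r_a(s,s') \in \A$ whenever $a \in \aschur$. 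Furthermore, applying Lemma \ref{lem:A-filt} to the cosaturated subset $\pi[\dom \lambda]$ yields $\aschur[\dom \lambda] = \aschur \cap \Schur[\dom \lambda]$, so the difference $a \cdot C^\lambda_{s,t} - \sum_{s'} r_a(s,s')\, C^\lambda_{s',t}$ automatically lies in $\aschur[\dom \lambda]$ rather than merely in $\Schur[\dom \lambda]$. This establishes cellularity of $A(\pi)$ over $\A$.

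For the final assertion about $\Schur_q(\pi)$, I would argue by a straightforward base-change argument: since $\aschur$ is free over $\A$ with basis $A(\pi)$, and each cell-chain ideal $\aschur[\Phi_j]$ (in the $\A$-analogue of the filtration of Corollary \ref{prop:rat-filtration}) is spanned over $\A$ by a sub-union of $A(\pi)$, tensoring with $\Bbbk$ over $\A$ preserves the basis, the filtration, and the anti-involution. The integrality of the $r_a(s,s')$ then guarantees that all cellular relations in $\aschur$ descend to cellular relations in $\Schur_q(\pi) = \Bbbk \otimes_\A \aschur$. The main potential obstacle would be the integrality of $r_a(s,s')$, but this is essentially automatic from the fact that ${}_\A\Delta(\lambda)$ is an $\aschur$-stable $\A$-lattice with the prescribed $\A$-basis; the only genuinely new arithmetic ingredient beyond Theorem \ref{thm:cellular} is the identification $\aschur[\dom \lambda] = \aschur \cap \Schur[\dom \lambda]$ supplied by Lemma \ref{lem:A-filt}.
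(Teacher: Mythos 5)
Your proof is correct and follows essentially the same route as the paper: inherit the $*$-symmetry and the cell congruence from Theorem \ref{thm:cellular}, upgrade the structure constants $r_a(s,s')$ to elements of $\A$, and conclude for $\Schur_q(\pi)$ by base change along $\A \to \Bbbk$. The only (immaterial) difference is that you obtain $r_a(s,s') \in \A$ from the $\aschur$-stability of the lattice ${}_\A\Delta(\lambda)$ with its chosen $\A$-basis, whereas the paper reads it off from the fact that $A(\pi)$ is an $\A$-basis of $\aschur$; both justifications are valid.
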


\begin{proof}
By Lemma \ref{lem:int} the basis $A(\pi)$ is also a $\Q(v)$-basis of
the rational form $\Schur = \Schur(\pi)$.  By Theorem
\ref{thm:cellular} we have the equality $(C^\lambda_{s,t})^* =
C^\lambda_{t,s}$ for all $\lambda, s,t$. Furthermore, for any $a \in
\aschur$ we have
\[
  a \cdot C^\lambda_{s,t} \equiv \textstyle \sum_{s'=1}^{d(\lambda)}
  r_a(s,s')\, C^\lambda_{s',t} \pmod{\Schur[\dom \lambda]}
\]
where $r_a(s,s') \in \Q(v)$ is independent of $t$. The fact that
$A(\pi)$ is an $\A$-basis of $\aschur$ implies that the coefficients
$r_a(s,s') \in \A$ in the above expression. This proves that $A(\pi)$
is a cellular basis of $\aschur$.

For the last statement, recall that by \cite{GL} cellularity behaves
well under change of base ring.
\end{proof}

\begin{rmk}
  It is natural to ask if there is a self-contained proof of Corollary
  \ref{cor:k-basis} which does not depend on a descent from
  ${}_\A\dot{\UU}$. This appears to be an interesting problem. Indeed,
  verifying that $\aschur(\pi)$ is free over $\A$ without relying on a
  descent from ${}_\A\dot{\UU}$ seems to be difficult.
\end{rmk}

\section{A filtration on certain projective modules}%
\label{sec:filtration}\noindent
We consider a natural decomposition of the left regular representation
of $\Schur_q(\pi)$ into projective modules, coming directly from the
given family of orthogonal idempotents in the definition of $\Schur =
\Schur(\pi)$ by generators and relations.

\subsection{}\label{ss:projectives}
Throughout this section we work over an arbitrary commutative ring
$\Bbbk$ with 1, regarded as $\A$-algebra via the specialization $v
\mapsto q \in \Bbbk$, for a given invertible element $q \in \Bbbk$.
Fix a finite saturated subset $\pi$ of $\X^+$, and write $\Schur_q =
\Schur_q(\pi)$.  We have a decomposition
\begin{equation*}
  \Schur_q = \textstyle \bigoplus_{\lambda \in W\pi} \Schur_q 1_\lambda.
\end{equation*}
This is a decomposition of the left regular representation into
projective left modules.  We aim to construct certain natural
filtrations on these projectives. We will work with a fixed maximal
element $\lambda_1$ in $\pi$, and proceed inductively.

We write $\Delta_q(\lambda)$ for the $\Schur_q$-module $\Bbbk
\otimes_\A ({}_\A \Delta(\lambda)$ for each $\lambda$. If $\Bbbk$ is a
field, $\Delta_q(\lambda)$ is a $q$-analogue of a Weyl module; since
it is a highest weight module of highest weight $\lambda$, it has a
unique maximal submodule with corresponding simple quotient denoted by
$L_q(\lambda)$. It can be shown by \cite{GL} that (still assuming
$\Bbbk$ is a field) the set of $\L_q(\lambda)$, for $\lambda \in \pi$,
is a complete set of simple $\Schur_q(\pi)$-modules, up to
isomorphism.

\begin{lem}
  Let $\lambda_1$ be a maximal element of $\pi$. Put $\pi' = \pi
  -\{\lambda_1\}$ and consider the quotient map $p = p_{\pi, \pi'}$.
  Let $p_\lambda$ be the restriction of $p$ to $\Schur_q(\pi)
  1_\lambda$, for each $\lambda \in W\pi$. Then $p_\lambda$ maps
  $\Schur_q(\pi) 1_\lambda$ onto $\Schur_q(\pi')1_\lambda$ or zero,
  depending whether $\lambda \in W\pi'$ or not.
\end{lem}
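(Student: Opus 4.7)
The plan is to derive everything directly from the definition of the quotient map in \ref{ss:quotient-morphisms} and \ref{ss:A-quotient-morphisms}. First I would record that $p = p_{\pi,\pi'}$ makes sense as a surjective $\Bbbk$-algebra homomorphism $\Schur_q(\pi) \to \Schur_q(\pi')$: it is obtained from ${}_\A p_{\pi,\pi'}$ by applying $\Bbbk \otimes_\A (-)$, and on generators it sends $E_i^{(a)} \mapsto E_i^{(a)}$, $F_i^{(a)} \mapsto F_i^{(a)}$, and $1_\mu \mapsto 1_\mu$ if $\mu \in W\pi'$, $1_\mu \mapsto 0$ otherwise.

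Next, since $p$ is an algebra homomorphism, for any $x \in \Schur_q(\pi)$ and any $\lambda \in W\pi$ we have $p(x 1_\lambda) = p(x) p(1_\lambda)$, so the image of $p_\lambda$ is $p(\Schur_q(\pi)) \cdot p(1_\lambda) = \Schur_q(\pi') \cdot p(1_\lambda)$ by surjectivity of $p$. Thus everything reduces to evaluating $p(1_\lambda)$.

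If $\lambda \in W\pi'$, then by definition $p(1_\lambda) = 1_\lambda$ in $\Schur_q(\pi')$, so $p_\lambda$ maps $\Schur_q(\pi)1_\lambda$ onto $\Schur_q(\pi')\,1_\lambda$. If instead $\lambda \in W\pi \setminus W\pi'$, then since $\lambda_1$ is dominant and is the only dominant weight in its $W$-orbit, we have $W\pi = W\pi' \sqcup W\lambda_1$, so $\lambda \in W\lambda_1$ and $\lambda \notin W\pi'$ forces $p(1_\lambda) = 0$; hence $p_\lambda = 0$.

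There is no real obstacle here — the lemma is essentially a bookkeeping consequence of how $p_{\pi,\pi'}$ was defined on the idempotents $1_\mu$ — so the only thing to be careful about is the disjointness $W\pi = W\pi' \sqcup W\lambda_1$, which is immediate from the fact that each $W$-orbit in $\X$ meets $\X^+$ in exactly one point.
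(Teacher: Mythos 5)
Your proof is correct and follows essentially the same route as the paper: both arguments reduce the lemma to the fact that the homomorphism $p=p_{\pi,\pi'}$ sends $1_\lambda$ to $1_\lambda$ or to $0$ according as $\lambda\in W\pi'$ or not, so that $p(x1_\lambda)=p(x)p(1_\lambda)$ lands in $\Schur_q(\pi')1_\lambda$ or vanishes, with surjectivity of $p$ giving the ``onto''. Your extra remark that $W\pi\setminus W\pi'=W\lambda_1$ is a harmless (and correct) refinement that the paper's proof does not need, since the definition of $p$ already kills every $1_\mu$ with $\mu\notin W\pi'$.
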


\begin{proof}
  The map $p$ preserves weight spaces. More precisely, if $x1_\lambda
  = x$ for some $x \in \Schur_q(\pi)$ then $p(x 1_\lambda) =
  p(x)$. This implies that $p(x) 1_\lambda = p(x)$ if $\lambda \in
  W\pi'$ or $0 = p(x)$ otherwise.
\end{proof}

\subsection{}
It is clear from the defining relation \ref{ss:defrels}(c) that any
$E_i^{(b)}$ ($i \in I$, $b > 0$) acts as zero on any vector $\epsilon
\in 1_{\lambda_1} \Schur_q^+ 1_\lambda$. Thus any $0 \ne \epsilon \in
1_{\lambda_1} \Schur_q^+ 1_\lambda$ is a maximal vector of weight
$\lambda_1$. In other words, any vector in the right ideal
$\Delta_q(\lambda_1)^* = 1_{\lambda_1} \Schur_q^+ = 1_{\lambda_1} \Schur_q$
of right weight $\lambda$ is a maximal vector of (left) weight
$\lambda_1$. Let
\[
  \epsilon_1, \dots , \epsilon_{r-1}, \epsilon_r
\]
be a $\Bbbk$-basis for the biweight space $1_{\lambda_1} \Schur_q^+
1_\lambda$. Then $\epsilon_1, \dots , \epsilon_{r-1}, \epsilon_r$ are
linearly independent maximal vectors of weight $\lambda_1$ in $\Schur_q
1_\lambda$. Hence the module $P = \Schur_q 1_\lambda$ contains the
submodule
\[
  P_1 = \textstyle \bigoplus_{j=1}^r \Schur_q \epsilon_j 1_\lambda
  \simeq \bigoplus^r \Delta_q(\lambda_1). 
\]
This is an isotypic submodule of the projective module $P = \Schur_q
1_\lambda$.  The number $r$ of direct summands in $P_1$ isomorphic to
$\Delta_q(\lambda_1)$ is precisely the rank over $\Bbbk$ of the
$\lambda$ weight space of $\Delta_q(\lambda_1)$.

\begin{lem}
  Let $\Schur_q = \Schur_q(\pi)$. Let $P = \Schur_q 1_\lambda$ for
  some $\lambda \in W\pi$. The kernel of the restriction $p_\lambda$
  of $p = p_{\pi,\pi'}$ as above to $\Schur_q 1_\lambda$ is precisely
  the isotypic submodule $P_1 = \bigoplus_{j=1}^r \Schur_q \epsilon_j
  1_\lambda \simeq \bigoplus^r \Delta_q(\lambda_1)$ generated by
  $1_{\lambda_1} \Schur_q^+ 1_\lambda$. Here $r = \rk_{\Bbbk}
  1_{\lambda_1} \Schur_q^+ 1_\lambda = \rk_{\Bbbk} 1_\lambda
  \Delta_q(\lambda_1)$.
\end{lem}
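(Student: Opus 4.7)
The plan is to proceed in two stages: first identify $\ker p_\lambda$ as an explicit subspace using the triangular decomposition, and then recognize it as the stated isotypic direct sum.

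For the first stage, I would invoke the $\Bbbk$-form analogue of Corollary \ref{cor:kernel}(a), obtained from Lemma \ref{lem:A-filt} by base change along $v \mapsto q$: the kernel of $p = p_{\pi,\pi'}$ is the two-sided ideal $\Schur_q 1_{\lambda_1} \Schur_q$. The triangular decomposition rewrites this as $\Schur_q^- 1_{\lambda_1} \Schur_q^+$, and right-multiplication by $1_\lambda$ yields
\[
\ker p_\lambda \;=\; \Schur_q^- 1_{\lambda_1} \Schur_q^+ 1_\lambda \;=\; \textstyle\sum_{j=1}^r \Schur_q^-\,\epsilon_j,
\]
using that $\{\epsilon_1,\dots,\epsilon_r\}$ is a $\Bbbk$-basis of $1_{\lambda_1}\Schur_q^+ 1_\lambda$. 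Maximality of $\lambda_1$ in $\pi$ forces $\lambda_1 + \alpha_i \notin W\pi$ for all $i \in I$ (the dominant representative of the $W$-orbit through $\lambda_1 + \alpha_i$ would strictly dominate $\lambda_1$ within $\pi$); hence $E_i\,\epsilon_j = 1_{\lambda_1+\alpha_i} E_i\,\epsilon_j = 0$, and combined with $\Schur_q^0\,\epsilon_j = \Bbbk\,\epsilon_j$, the triangular decomposition gives $\Schur_q\,\epsilon_j = \Schur_q^-\,\epsilon_j$. Therefore $\ker p_\lambda = \sum_j \Schur_q\,\epsilon_j = P_1$.

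The second stage requires lifting Corollary \ref{cor:mult-iso} to the $\Bbbk$-form: the multiplication map
\[
\Schur_q 1_{\lambda_1} \otimes_\Bbbk 1_{\lambda_1} \Schur_q \;\longrightarrow\; \Schur_q 1_{\lambda_1} \Schur_q
\]
is a $\Schur_q$-bimodule isomorphism. This follows from the rational version in Corollary \ref{cor:mult-iso} together with the freeness of ${}_\A\Schur(\pi)$ over $\A$ (Section \ref{sec:cellular-A}) and the torsion-free argument from Lemma \ref{lem:int}. Right-multiplying by $1_\lambda$ and noting that $1_{\lambda_1} \Schur_q 1_\lambda = 1_{\lambda_1}\Schur_q^+ 1_\lambda$ (because $1_{\lambda_1} F_i = F_i\, 1_{\lambda_1+\alpha_i} = 0$ by maximality) has $\Bbbk$-basis $\{\epsilon_j\}$, I obtain
\[
P_1 \;\simeq\; \textstyle\bigoplus_{j=1}^r \Schur_q 1_{\lambda_1} \cdot \epsilon_j \;=\; \bigoplus_{j=1}^r \Schur_q\,\epsilon_j.
\]
A parallel base-change argument identifies $\Schur_q 1_{\lambda_1}$ with $\Delta_q(\lambda_1)$: the surjection $\aschur\, 1_{\lambda_1} \twoheadrightarrow {}_\A\Delta(\lambda_1) = \aschur(\pi_{\lambda_1})^- 1_{\lambda_1}$ induced by $p_{\pi,\pi_{\lambda_1}}$ is a map between free $\A$-modules of equal rank $d(\lambda_1)$ (both ranks read off the cellular basis of Proposition \ref{prop:A-basis}) that is an isomorphism over $\Q(v)$ by the corollary to Theorem \ref{thm:simplicity}, hence is already an isomorphism. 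The equality $r = \rk_\Bbbk 1_\lambda \Delta_q(\lambda_1)$ is then immediate from $\Delta_q(\lambda_1)^* = 1_{\lambda_1}\Schur_q$ and the anti-involution $*$, which swaps right-weight $\lambda$ and left-weight $\lambda$ subspaces.

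The main obstacle is the transition from the rational case to the $\Bbbk$-form: the $\Bbbk$-analogues of Corollary \ref{cor:kernel}(a), Corollary \ref{cor:mult-iso}, and $\Schur\, 1_{\lambda_1} \simeq \Delta(\lambda_1)$ are not stated verbatim earlier in the paper. The freeness of ${}_\A\Schur(\pi)$ over $\A$ from Section \ref{sec:cellular-A} (which itself depends on canonical basis theory) combined with Lemma \ref{lem:int} lifts each of these across the specialization $v \mapsto q$; once these are in hand, the argument reduces to the rational-case reasoning applied inside $\Schur_q$.
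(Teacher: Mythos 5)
Your proof is correct, but it takes a genuinely different route from the paper's. The paper argues globally: it observes that $P_1^{(\lambda)} \subseteq \ker p_\lambda$ for every $\lambda \in W\pi$, sums the ranks $r_\lambda = \rk_\Bbbk 1_{\lambda_1}\Schur_q^+ 1_\lambda$ over all $\lambda$ to get $\rk_\Bbbk \Delta_q(\lambda_1)$ copies of $\Delta_q(\lambda_1)$ in total, and matches this against the known rank $(\rk_\Bbbk \Delta_q(\lambda_1))^2$ of $\ker p$, forcing each inclusion to be an equality. You instead work locally at a single $\lambda$: you compute $\ker p_\lambda = \Schur_q^- 1_{\lambda_1}\Schur_q^+ 1_\lambda$ directly from the $\Bbbk$-form of Corollary \ref{cor:kernel}(a) (via Lemma \ref{lem:A-filt} and base change) together with the triangular decomposition of Lemma \ref{lem:td}, and then collapse $\Schur_q\,\epsilon_j$ to $\Schur_q^-\epsilon_j$ using $E_i\,\epsilon_j = 0$. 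What your approach buys is twofold: it identifies the kernel without reference to the other weight spaces, and --- via your second stage, the $\Bbbk$-form of Corollary \ref{cor:mult-iso} --- it actually \emph{proves} the directness of the sum $\bigoplus_j \Schur_q\,\epsilon_j$ and the isomorphisms $\Schur_q\,\epsilon_j \simeq \Schur_q 1_{\lambda_1} \simeq \Delta_q(\lambda_1)$, which the paper asserts without proof in \S\ref{ss:projectives} before the lemma and then takes for granted. What the paper's counting argument buys is brevity, at the cost of leaving those structural claims and the passage from $\A$ to $\Bbbk$ implicit. Both routes ultimately rest on the same inputs from Section \ref{sec:cellular-A} (freeness of $\aschur(\pi)$ and of the ideals $\aschur[\Phi]$ over $\A$), and you correctly flag that this base-change step is where the real content lies; your justification of it is adequate.
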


\begin{proof}
  As $\lambda$ varies over $W\pi$, by counting up the number of copies
  of $\Delta_q(\lambda_1)$ in each $\Schur_q 1_\lambda$ we get
  $\rk_{\Bbbk} \Delta_q(\lambda_1)$ of them, since
  $\Delta_q(\lambda_1)^* = 1_{\lambda_1}\Schur_q^+ =
  \bigoplus_{\lambda \in W\pi} 1_{\lambda_1}\Schur_q^+ 1_\lambda$ has
  the same rank over $\Bbbk$ as $\Delta_q(\lambda_1)$. We know the
  kernel of $p = p_{\pi,\pi'}$ contains exactly this number of copies
  of $\Delta_q(\lambda_1)$.
\end{proof}

We now obtain the main result of this section. This is a $q$-analogue
of Lemma 1.2c of \cite{Donkin:SA3}.

\begin{prop}
  Let $\lambda \in W\pi$. Let $\pi = \{ \lambda_1, \lambda_2, \dots,
  \lambda_m \}$ be ordered so that each $\lambda_j$ is a maximal
  element of $\{\lambda_j, \dots, \lambda_m\}$, for $j = 1, \dots,
  m$. Then the projective module $P = \Schur_q 1_\lambda$ has a
  filtration
  \[
  0 = P_0 \subset P_1 \subset P_2 \subset \cdots \subset P_n = P
  \]
  such that each successive quotient $P_j/ P_{j-1}$ is isotypic of
  type $\Delta_q(\lambda_j)$. The multiplicity of
  $\Delta_q(\lambda_j)$ in $P_j/ P_{j-1}$ is the rank over $\Bbbk$ of
  the weight space $1_\lambda \Delta_q(\lambda_j)$.
\end{prop}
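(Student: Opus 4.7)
The approach is to induct on $m = |\pi|$, using the preceding lemma as the top-layer engine at each stage. Set $\pi^{(k)} = \{\lambda_k, \dots, \lambda_m\}$ (with $\pi^{(m+1)} = \varnothing$); each $\pi^{(k)}$ is saturated in $\pi$, since removing a maximal element from a saturated set leaves a saturated set, and $\lambda_k$ is a maximal element of $\pi^{(k)}$ for $k \le m$. Define
\[
   P_j \;=\; \ker\bigl(\,\Schur_q(\pi)\, 1_\lambda \xrightarrow{\,p_{\pi,\pi^{(j+1)}}\,} \Schur_q(\pi^{(j+1)})\, 1_\lambda\,\bigr) \qquad (0 \le j \le m),
\]
with the convention that the target is $0$ when $\pi^{(j+1)} = \varnothing$ or $\lambda \notin W\pi^{(j+1)}$, so that $P_0 = 0$ and $P_m = P$. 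Since the quotient maps factor as $p_{\pi,\pi^{(j+1)}} = p_{\pi^{(j)},\pi^{(j+1)}} \circ p_{\pi,\pi^{(j)}}$ (cf.\ the commutative triangle in Lemma~\ref{lem:rat-filt}), the chain $P_0 \subseteq P_1 \subseteq \cdots \subseteq P_m$ is automatically increasing.

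To identify the subquotients, the first isomorphism theorem applied to the above factorization identifies $P/P_{j-1}$ with $\Schur_q(\pi^{(j)})\, 1_\lambda$ as left $\Schur_q(\pi)$-modules, and identifies $P_j/P_{j-1}$ with the kernel of the restriction of $p_{\pi^{(j)},\pi^{(j+1)}}$ to $\Schur_q(\pi^{(j)})\, 1_\lambda$. Since $\lambda_j$ is maximal in $\pi^{(j)}$, I apply the preceding lemma verbatim with $\pi,\pi',\lambda_1$ replaced by $\pi^{(j)}, \pi^{(j+1)}, \lambda_j$ (in its $\Bbbk$-form, obtained by base change of the isomorphism $\Bbbk \otimes_\A {}_\A\Schur(\pi) \to \Schur_q(\pi)$ preceding this section) to get
\[
   P_j/P_{j-1} \;\simeq\; \textstyle\bigoplus^{r_j} \Delta_q(\lambda_j), \qquad r_j \;=\; \rk_{\Bbbk}\bigl(\,1_{\lambda_j}\, \Schur_q(\pi^{(j)})^+\, 1_\lambda\,\bigr).
\]

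To match $r_j$ with $\rk_{\Bbbk}\bigl(1_\lambda \Delta_q(\lambda_j)\bigr)$ as the statement requires, note that the anti-involution $*$ of \ref{ss:star-omega} restricts to a $\Bbbk$-linear isomorphism $1_{\lambda_j}\, \Schur_q(\pi^{(j)})^+\, 1_\lambda \xrightarrow{\sim} 1_\lambda\, \Schur_q(\pi^{(j)})^-\, 1_{\lambda_j}$, which preserves $\Bbbk$-rank. Because $\lambda_j$ is maximal in $\pi^{(j)}$, the triangular decomposition (Lemma~\ref{lem:td}) forces $\Schur^+ 1_{\lambda_j} = \Bbbk\cdot 1_{\lambda_j}$ and hence $\Schur_q(\pi^{(j)}) 1_{\lambda_j} = \Schur_q(\pi^{(j)})^- 1_{\lambda_j}$; this in turn coincides with $\Delta_q(\lambda_j)$ by the Corollary immediately following Theorem~\ref{thm:simplicity} (transported to the $\Bbbk$-form). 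Consequently the target of $*$ is precisely the weight space $1_\lambda \Delta_q(\lambda_j)$, giving the claimed multiplicity.

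The main bookkeeping subtlety — and the principal place care is needed — is the case where $\lambda \notin W\pi^{(j)}$ for some smallest such $j$: then $1_\lambda$ already vanishes in $\Schur_q(\pi^{(j)})$, so $P_{j-1} = P$ and all successive quotients $P_k/P_{k-1}$ ($k \ge j$) are zero, which is consistent with $1_\lambda \Delta_q(\lambda_k) = 0$ for exactly those $k$ (as the weight space then has rank $0$). Apart from this, and the one genuine input — the preceding lemma supplying the top layer at each stage — the argument is formal.
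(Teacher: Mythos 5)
Your proof is correct and follows essentially the same route as the paper's: both define the filtration by $P_j = \ker\bigl(p_{\pi,\pi^{(j+1)}}|_{P}\bigr)$ for the chain of saturated tails $\pi^{(j)} = \{\lambda_j,\dots,\lambda_m\}$ and apply the preceding lemma at each stage to the pair $(\pi^{(j)},\pi^{(j+1)})$ with $\lambda_j$ maximal. You supply some details the paper leaves implicit (the first-isomorphism-theorem identification of $P_j/P_{j-1}$, the multiplicity via the anti-involution $*$, and the degenerate case $\lambda \notin W\pi^{(j)}$), but the underlying argument is the same.
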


\begin{proof}
This follows by induction. The base case is the preceding lemma.  Put
$\pi_j = \{ \lambda_j , \dots, \lambda_m \}$, and let $\Phi_j = \pi -
\pi_j$. Then $\pi_j$ is saturated and $\Phi_j$ is cosaturated, for
each $j$. Let $k \ge 2$. By induction $P_{k-1}$ satisfies the
statement (for each $\lambda \in \pi$). By considering the map
$p_{\pi, \pi_k}$ with kernel $\Schur_q[\Phi_{k-1}]$ we see from the
preceding lemma again that the result holds for $P_k$.
\end{proof}

\end{document}